	\let\over=\@@over \let\overwithdelims=\@@overwithdelims
	\let\atop=\@@atop \let\atopwithdelims=\@@atopwithdelims
  	\let\above=\@@above \let\abovewithdelims=\@@abovewithdelims
	\newcommand{\eqref}[1]{~(\ref{#1})}
	\def\mod{\mathop{\rm mod}}
\newcommand{\Fnorm}[1]{\Vert #1\Vert_{\rm F}}
\newcommand{\Unif}{\text{Unif}}
\newcommand{\Law}{\text{Law}}
\def\exp{\mathop{\rm exp}}
\def\Cov{\mathrm{Cov}}
\def\unifto{\mathop{{\mskip 3mu plus 2mu minus 1mu%
	\setbox0=\hbox{$\mathchar"3221$}%
	\raise.6ex\copy0\kern-\wd0%
	\lower0.5ex\hbox{$\mathchar"3221$}}\mskip 3mu plus 2mu minus 1mu}}
\def\simleq{{{\mskip 3mu plus 2mu minus 1mu%
	\setbox0=\hbox{$\mathchar"013C$}%
	\raise.2ex\copy0\kern-\wd0%
	\lower0.9ex\hbox{$\mathchar"0218$}}\mskip 3mu plus 2mu minus 1mu}}
\def\simleq{\lesssim}
\def\simgeq{{{\mskip 3mu plus 2mu minus 1mu%
	\setbox0=\hbox{$\mathchar"013E$}%
	\raise.2ex\copy0\kern-\wd0%
	\lower0.9ex\hbox{$\mathchar"0218$}}\mskip 3mu plus 2mu minus 1mu}}
\def\simgeq{\gtrsim}
\newtheorem{theorem}{Theorem}
\newtheorem{lemma}[theorem]{Lemma}
\newtheorem{corollary}[theorem]{Corollary}
\newtheorem{proposition}[theorem]{Proposition}
\newtheorem{suppprop}[theorem]{Proposition}
\newtheorem{supplemma}[theorem]{Lemma}
\theoremstyle{definition}
\newtheorem{example}{Example}
\newcommand{\reals}{\mathbb{R}}
\newcommand{\Expect}{\mathbb{E}}
\newcommand{\Prob}{\mathbb{P}}
\newcommand{\prob}[1]{\mathbb{P}\left[#1\right]}
\newcommand{\ie}{i.e.\xspace}
\newcommand{\iid}{iid\xspace}
\newcommand{\pth}[1]{\left( #1 \right)}
\newcommand{\toprob}{{\xrightarrow{\Prob}}}
\newcommand{\iiddistr}{{\overset{\text{\iid}}{\sim}}}
\newcommand{\var}{\mathrm{var}}
\newcommand\indep{\protect\mathpalette{\protect\independenT}{\perp}}
\def\independenT#1#2{\mathrel{\rlap{$#1#2$}\mkern2mu{#1#2}}}
\newcommand{\Bern}{\mathrm{Bern}}
\newcommand{\Binom}{\mathrm{Binom}}
\newcommand{\indc}[1]{{\mathbf{1}\left\{{#1}\right\}}}
\newcommand{\calI}{{\mathcal{I}}}
\newcommand{\calJ}{{\mathcal{J}}}
\newcommand{\calN}{{\mathcal{N}}}
\newcommand{\calR}{{\mathcal{R}}}
\newcommand{\calX}{{\mathcal{X}}}
\renewcommand{\iff}{\Leftrightarrow}
\newcommand{\Lfdr}{\mathrm{Lfdr}}
\newcommand{\mFDR}{\mathrm{mFDR}}
\newcommand{\mFNR}{\mathrm{mFNR}}
\newcommand{\FDR}{\mathrm{FDR}}
\newcommand{\FNR}{\mathrm{FNR}}
\newcommand{\FDP}{\mathrm{FDP}}
\newcommand{\FNP}{\mathrm{FNP}}
\newcommand{\EFN}{\mathrm{EFN}}
\newcommand{\FN}{\mathrm{FN}}
\newcommand{\optFNR}{{\sf FNR}^*}
\newcommand{\optEFNnorm}{{\sf EFN}^*}
\newcommand{\optmEFNnorm}{{\sf mEFN}^*}
\newcommand{\optmFNR}{{\sf mFNR}^*}
\newcommand{\maroracle}{\delta^n_{\rm NP}}
\newcommand{\NP}{\delta_{\rm NP}}
\newcommand{\oracle}{\delta^n_{\rm OR}}
\begin{document}

\title{Large-scale Multiple Testing: Fundamental Limits of False Discovery Rate Control and Compound Oracle}
\author{Yutong Nie and Yihong Wu\thanks{The authors are with the Department of Statistics and Data Science, Yale University, New Haven CT, USA, 
\texttt{$\{$yihong.wu,yutong.nie$\}$@yale.edu}.
Y.~Wu is supported in part by the NSF Grant CCF-1900507, an NSF CAREER award CCF-1651588, and an Alfred Sloan fellowship.}}
\date{\today}

\maketitle

\begin{abstract}

The false discovery rate (FDR) and the false non-discovery rate (FNR), defined as the expected false discovery proportion (FDP) and the false non-discovery proportion (FNP), are the most popular benchmarks for multiple testing. Despite the theoretical and algorithmic advances in recent years, the optimal tradeoff between the FDR and the FNR has been largely unknown except for certain restricted classes of decision rules, e.g., separable rules, 
or for other performance metrics, e.g., the marginal FDR and the marginal FNR (mFDR and mFNR). In this paper, we determine the asymptotically optimal FDR-FNR tradeoff under the two-group random mixture model when the number of hypotheses tends to infinity. Distinct from the optimal mFDR-mFNR tradeoff, which is achieved by separable decision rules, the optimal FDR-FNR tradeoff requires compound rules even in the large-sample limit and for models as simple as the Gaussian location model. 
This suboptimality of separable rules also holds for other objectives, such as maximizing the expected number of true discoveries. Finally, to address the limitation of the FDR which only controls the expectation but not the fluctuation of the FDP, we also determine the optimal tradeoff when the FDP is controlled with high probability and show it coincides with that of the mFDR and the mFNR.
Extensions to models with a fixed non-null proportion are also obtained.

\end{abstract}

\tableofcontents

\section{Introduction}

\subsection{Background and problem formulation}
\label{sec:Backgrounds}
One of the central topics in modern statistics,
large-scale hypothesis testing has been widely applied in a variety of fields such as genetics, astronomy and brain imaging, in which hundreds or thousands of tests are carried out simultaneously, with the primary goal of identifying the non-null hypotheses while controlling the false discoveries. One of the most popular figures of merit in multiple testing is the \emph{false discovery rate} (FDR), formally introduced by Benjamini and Hochberg in 1995 \cite{benjamini1995controlling}. 
Let $\theta^n\triangleq (\theta_1,\theta_2,\cdots,\theta_n)\in\{0,1\}^n$ denote the true labels of $n$ hypotheses, where $\theta_i=0$ when the $i$-th hypothesis is a null and $\theta_i=1$ otherwise. Given $n$ observations $X^n\triangleq (X_1,X_2,\cdots,X_n)\in\calX^n$, a (possibly randomized) decision rule $\delta^n \triangleq  (\delta_1,\cdots,\delta_n)$ is represented by a Markov kernel $P_{\delta^n|X^n}$ from the sample space $\calX^n$ to $\{0,1\}^n$. The FDR of this procedure is defined to be the expectation of the \emph{false discovery proportion} (FDP), \ie
\begin{align*}
    \FDR(\delta^n) \triangleq \Expect[\FDP(\delta^n)],\quad\FDP(\delta^n)\triangleq  \frac{\sum_{i=1}^n \delta_i(1-\theta_i)}{1\vee\sum_{i=1}^n \delta_i}.
\end{align*}
The FDR criterion is less stringent than the traditional family-wise error rate (the probability of having at least one false discovery). As such, it has been commonly adopted in multiple testing and significant progress has been achieved in designing decision rules that control the FDR at a desired level \cite{benjamini1995controlling,benjamini1999step,benjamini2001control,sarkar2002some,benjamini2006adaptive,gavrilov2009adaptive}.

To test a single (simple) hypothesis, it is well-known that the Neyman-Pearson test \cite{neyman1933ix} based on the likelihood ratio is the most powerful which minimizes the Type \text{\uppercase\expandafter{\romannumeral2}} error rate (the probability of falsely accepting a non-null) while controlling the Type \text{\uppercase\expandafter{\romannumeral1}} error rate (the probability of rejecting a true null) at a prescribed level $\alpha$. Moving to multiple testing, a natural question is to find the optimal decision rule under a meaningful  objective function.
To this end, analogous to Type \text{\uppercase\expandafter{\romannumeral1}} and \text{\uppercase\expandafter{\romannumeral2}} error rates, Genovese and Wasserman \cite{genovese2002operating} introduced a dual quantity of the FDR, called the \emph{false non-discovery rate} (FNR), which is the expectation of the \emph{false non-discovery proportion} (FNP):
\begin{align*}
    \FNR(\delta^n) \triangleq \Expect[\FNP(\delta^n)],\quad\FNP(\delta^n)\triangleq  \frac{\sum_{i=1}^n (1-\delta_i)\theta_i}{1\vee\sum_{i=1}^n (1-\delta_i)}.
\end{align*}
In the same paper \cite[Section 5, p.~506]{genovese2002operating}, they put forth  the fundamental question of finding the optimal procedure that minimizes the FNR among all procedures that control the FDR at a prespecified level $\alpha$.

In this article, 
we resolve this question in the large-$n$ limit under a Bayesian setting.
Specifically, we focus on the so-called \emph{two-group random mixture model}, which is widely used in the large-scale testing literature \cite{efron2001empirical,genovese2004stochastic,sun2007oracle,efron2008microarrays,tony2017optimal,heller2021optimal}. Suppose the true labels $\theta^n=(\theta_1,\theta_2,\cdots,\theta_n)$ are independent and identically distributed as $\Bern(\pi_1)$, where $\pi_1$ represents the average non-null fraction. Conditioning on $\theta^n$, the observations are independently generated as
\begin{align}\label{eq:two-group_model}
		X_i\,|\,\theta_i \sim f_{\theta_i},\quad i=1,\cdots,n,
\end{align}
where $f_0$ and $f_1$ are probability density functions (pdfs) corresponding to the null and non-null hypothesis, respectively. 
The fundamental limit of multiple testing in this model is:
\begin{align}
\begin{split}
    \optFNR_n(\alpha) \triangleq  & \inf \,\,\,\FNR(\delta^n) = \Expect\left[\frac{\sum_{i=1}^n (1-\delta_i)\theta_i}{1\vee\sum_{i=1}^n (1-\delta_i)}\right]\\
    & \,\text{s.t.}\,\,\,\, \FDR(\delta^n)= \Expect\left[\frac{\sum_{i=1}^n \delta_i(1-\theta_i)}{1\vee\sum_{i=1}^n \delta_i}\right]\leq \alpha,
\end{split}\label{eq:opt_prob_FDR&FNR}
\end{align}
where the infimum is taken over all (possibly randomized)\footnote{We will show later that in the large-$n$ limit it is without loss of optimality to restrict to deterministic rules in \prettyref{eq:opt_prob_FDR&FNR}; see \prettyref{sec:MainThm}.} feasible decision rules $\delta^n$. 
In other words, $ \optFNR_n(\alpha)$ is the minimum FNR achieved by the optimal decision rule that controls the FDR at level $\alpha$. 
For multiple testing, there is no counterpart of the Neyman-Pearson lemma. Despite the significant progress on the optimal FDR control with a restricted class of separable procedures \cite{genovese2002operating,sun2007oracle,tony2017optimal,cao2022optimal}, it still remains open how to find the optimal decision rule that achieves the optimal $\optFNR_n(\alpha)$ in \prettyref{eq:opt_prob_FDR&FNR} for general null and alternative distributions.\footnote{A different asymptotic setting is considered in \cite{arias2017distribution,rabinovich2020optimal}: It is assumed that $f_0$ is a centered distribution with tails decaying as $\exp(-|x|^\gamma / \gamma)$ for some $\gamma\geq 1$ and $f_1$ is shifted by $\mu_n = (\gamma r_n\log n)^{1/\gamma}>0$. The non-null proportion scales as $n^{-\beta_n}$, $\beta_n\in(0,1)$. Under these conditions, the optimal rate at which $\FDR+\FNR$ vanishes is determined.}

\subsection{mFDR: a related but nonequivalent criterion}
\label{sec:mFDR}

In view of the difficulty of solving the optimal FDR control problem \prettyref{eq:opt_prob_FDR&FNR}, a number of proxies to FDR have been proposed. 
\cite{genovese2002operating} considered asymptotic approximations to the FDR and FNR when the decision rules are restricted to those that threshold $p$-values with a fixed cutoff. Later in \cite{storey2003positive,sun2007oracle}, these approximations were referred to as the marginal false discovery rate (mFDR) and marginal false non-discovery rate (mFNR), formally defined as
\begin{align*}
    \mFDR(\delta^n) \triangleq \frac{\Expect[\sum_{i=1}^n \delta_i(1-\theta_i)]}{\Expect[\sum_{i=1}^n \delta_i]}\quad\text{and}\quad\mFNR(\delta^n)\triangleq  \frac{\Expect[\sum_{i=1}^n (1-\delta_i)\theta_i]}{\Expect[\sum_{i=1}^n (1-\delta_i)]},
\end{align*}
replacing the expectation of ratios in the FDR and FNR by ratios of two expectations, with the understanding that $\frac{0}{0}=0$.
The problem of optimal mFDR control in the two-group model \eqref{eq:two-group_model} has been studied in \cite{sun2007oracle}:
\begin{align}
\begin{split}
    \optmFNR(\alpha)\triangleq  & \inf \,\,\,\mFNR(\delta^n)\\
    & \,{\rm s.t.}\,\,\,\mFDR(\delta^n)\leq \alpha.
\end{split}
\label{eq:opt_prob_mFDR&mFNR}
\end{align}
This problem turns out to not depend on $n$ so we omit the subscript; see  \eqref{eq:opt_prob_a&b} 
for a more explicit equivalent form of $\optmFNR(\alpha)$. It was shown in \cite{sun2007oracle,xie2011optimal} that the optimal mFDR control, namely, the optimal solution to \eqref{eq:opt_prob_mFDR&mFNR}, is a separable procedure (each $\delta_i$ only depending on $X_i$), 
given by thresholding the likelihood ratio (in this context known as the \emph{local false discovery rate} (Lfdr) \cite{efron2001empirical}) with a fixed cutoff. 
In addition to mFNR, other objective functions have also been considered subject to an mFDR constraint, including maximizing the expected number of discoveries \cite{tony2017optimal} 
and maximizing the expected number of true discoveries \cite{cao2022optimal}. 

Overall, controlling the mFDR is technically less challenging than controlling the FDR since the ratio of two expectations is easier to handle than the expectation of a ratio. 
As pointed out in \cite{storey2003positive}, despite its technical simplicity, the mFDR criterion is unable to jointly control the numerator and denominator of the random fraction FDP. 
It was argued in \cite{genovese2002operating,sun2007oracle,tony2017optimal,basu2018weighted} that the mFDR and FDR are asymptotically equivalent when the decision rule is separable with a fixed rejection region, so it is reasonable to control the mFDR rather than FDR for simplicity. However, as we will see soon, the optimal oracle decision rules of controlling the FDR need not be separable. 
In fact, the classical result of Robbins \cite{robbins1951asymptotically} showed that compound rules can substantially outperform separable rules, which form the premise of the empirical Bayes and compound decision theory. 
For this reason, we do not place any restriction on the form of the decision rules in the optimal FDR control problem \prettyref{eq:opt_prob_FDR&FNR}.

Recent works \cite{heller2021optimal,rosset2022optimal} provided valuable insight to address the difference between controlling the FDR and mFDR for a finite number of hypotheses. In addition to giving conditions that guarantee the existence and uniqueness of the optimal FDR control, they showed that for any number of hypotheses in the two-group model, the optimal procedure is \emph{compound} and takes the form of thresholding the Lfdr with a data-driven cutoff that depends on all observations, in contrast to the fixed threshold for the optimal mFDR control. An even more surprising result, as we will show in this paper, is that controlling the FDR and mFDR remains non-equivalent even when the number of hypotheses tends to infinity. 
In fact, the separable rule that optimally controls the mFDR is not optimal in controlling the FDR even in models as simple as the Gaussian location model. 
To demonstrate this point, let us consider two objective functions:
(a) minimizing the FNR as in \prettyref{eq:opt_prob_FDR&FNR}; and 
(b) maximizing the expected number of true discoveries divided by $n$ (ETD), or equivalently, minimizing the expected number of false non-discoveries divided by $n$ (EFN), \ie
\begin{align}
\begin{split}
    \optEFNnorm_n(\alpha) \triangleq  & \inf \,\,\,\EFN(\delta^n) = \frac{1}{n}\Expect\left[\sum_{i=1}^n\theta_i(1-\delta_i)\right]\\
    & \,\,\text{\rm s.t.}\,\,\,\,\,\, \FDR(\delta^n)\leq \alpha,
\end{split}\label{eq:opt_prob_FDR&EFN}
\end{align}  
and its marginal version
\begin{align}
\begin{split}
    \optmEFNnorm(\alpha) \triangleq  & \inf \,\,\,\EFN(\delta^n) = \frac{1}{n}\Expect\left[\sum_{i=1}^n\theta_i(1-\delta_i)\right]\\
    & \,\,\text{\rm s.t.}\,\,\,\,\, \mFDR(\delta^n)\leq \alpha.
\end{split}\label{eq:opt_prob_mFDR&EFN}
\end{align}
Problem \prettyref{eq:opt_prob_mFDR&EFN} turns out to not depend on $n$ and have the same optimal solution as \eqref{eq:opt_prob_mFDR&mFNR}. (See \cite{xie2011optimal,heller2021optimal} for a more general result that allows dependent data. For completeness, we give a formal statement in \prettyref{lmm:optimal_sol_of_mFDR&EFN} and a proof in \prettyref{app:OptRulemFDR}.)

We start with some notations. Let $f\triangleq \pi_0f_0 + \pi_1f_1$ be the marginal pdf, where $\pi_0+\pi_1=1$ and $\pi_0\in(0,1)$. Define
\[
\Lfdr(x)\triangleq\pi_0f_0(x)/f(x)
\]
to be the local false discovery rate \cite{efron2001empirical} at $x\in\calX$. It is well-known (cf.~e.g.~\cite{sun2007oracle,xie2011optimal,cao2022optimal}) that the optimal mFDR control, \ie, the optimal solution to \eqref{eq:opt_prob_mFDR&mFNR} and \eqref{eq:opt_prob_mFDR&EFN}, consists of $n$ independent and identical Neyman-Pearson tests. Specifically, the Neyman-Pearson lemma \cite{neyman1933ix} states that for testing ${\rm H}_0:\,X \sim f_0$ against ${\rm H}_1:\,X \sim f_1$, the most powerful test that controls the Type \text{\uppercase\expandafter{\romannumeral1}} error rate at level $\alpha$ is given by
\begin{align}
    \NP(X;\alpha)=
    \begin{cases}
        1 & \text{if } \Lfdr(X) < c\\
        \Bern(p) & \text{if } \Lfdr(X) = c\\
        0 & \text{if } \Lfdr(X) > c
    \end{cases}
		\label{eq:deltaNP}
\end{align}
with constants $c,p\in[0,1]$ chosen such that $\Prob(\NP(X;\alpha)=1\,|\,{\rm H}_0)=\alpha$. Then, the optimal mFDR control is given by $\maroracle(\alpha)\triangleq (\NP(X_1;\alpha),\cdots,\NP(X_n;\alpha))$; see \prettyref{thm:optimal_sol_of_mFDR&mFNR} for a formal statement and \prettyref{app:OptRulemFDR} for a proof. To simplify the notation, henceforth we denote 
\begin{equation}
W_i =\Lfdr(X_i)=\frac{\pi_0f_0(X_i)}{f(X_i)}, \quad i=1,\cdots,n.
\label{eq:Lfdri}
\end{equation}

\begin{example}[Gaussian mixture model]
\label{ex:gaussian}
Suppose that each $X_i \sim \calN(0,1)$ under the null and $\calN(\mu,1)$ under the alternative. (By symmetry, we only consider $\mu>0$.)
With $\varphi$ denoting the standard normal density, we have
\[
W_i = \frac{\pi_0 \varphi(X_i)}{\pi_0 \varphi(X_i) + \pi_1 \varphi(X_i-\mu)} 
= \frac{\pi_0}{\pi_0 + \pi_1\exp(\mu X_i - \mu^2/2)}
\]
which is a continuous random variable and decreasing in $X_i$. 
Hence, $\maroracle(\alpha)$ takes the form $\delta^n(t)=\left(\indc{X_1\geq t},\cdots,\indc{X_n\geq t}\right)$ for some constant $t\in[0,1]$ depending on $\pi_0$, $\mu$ and $\alpha$. 
Let $\Prob$ and $\Expect$ be taken with respect to the marginal pdf $f$, and $\Prob_i$ and $\Expect_i$ with respect to $f_i$, $i=0,1$. Let $\Phi$ denote the cumulative distribution function of the standard normal. Then, 
\begin{align*}
&\mFDR(t)\triangleq\mFDR(\delta^n(t))=\frac{\pi_0\Expect_0[\indc{X_1\geq t}]}{\Prob\pth{X_1\geq t}} = \frac{\pi_0\pth{1-\Phi(t)}}{1-\pth{\pi_0\Phi(t)+\pi_1\Phi(t-\mu)}},\\
&\mFNR(t) \triangleq\mFNR(\delta^n(t))=\frac{\pi_1\Expect_1[\indc{X_1< t}]}{\Prob\pth{X_1< t}} = \frac{\pi_1\Phi(t-\mu)}{\pi_0\Phi(t)+\pi_1\Phi(t-\mu)}\quad\text{and}\\
&\EFN(t)\triangleq\EFN(\delta^n(t))=\pi_1\Expect_1[\indc{X_1< t}]=\pi_1\Phi(t-\mu).
\end{align*}
\end{example}

\prettyref{fig:normal_model} displays the (m)FNR-(m)FDR curves of the separable rule $\delta^n(t)$ on the left and EFN-(m)FDR curves on the right, for $\pi_0=0.75$ and $\mu=1,1.6,2$. Since the FDR and mFDR, the FNR and mFNR are asymptotically equivalent for $\delta^n(t)$, we do not distinguish between them in this example. A simple observation is that if a point $(\FDR(t),\FNR(t))=(\alpha,\FNR(\NP^n(\alpha)))$ is above the straight line connecting $(0,\pi_1)$ and $(\pi_0,0)$, referred to as the baseline, then the separable rule $\NP^n(\alpha)$ is strictly dominated by a trivial procedure that randomizes between all rejections and all acceptances, attaining FDR equal to $\alpha$ and FNR equal to $\pi_1(1-\frac{\alpha}{\pi_0})<\FNR(\NP^n(\alpha))$. 
It is clear from \prettyref{fig:normal_model} that when $\mu=1$ and $1.6$, the FNR-FDR and EFN-FDR curves are partially or even entirely above the baseline. 
In other words, there exist some FDR level $\alpha$ such that $\NP^n(\alpha)$ is dominated by the trivial randomization procedure. In fact, this provably happens for any $\mu$ regardless of its magnitude. 
We show in \prettyref{app:GaussianModel} the following:
\begin{itemize}
	\item If the signal is sufficiently weak, namely, $\mu\leq \mu_0$ for some universal constant $\mu_0$,
	then for any FDR level $\alpha\in(0,\pi_0)$, we have $\FNR(\NP^n(\alpha)) > \pi_1(1 - \frac{\alpha}{\pi_0})$.
	As a consequence, the separable rule $\maroracle(\alpha)$ is strictly suboptimal in controlling the FDR for any $\alpha$ level.
	
    \item On the other hand, regardless of how strong the signal is, there always exists some FDR level $\alpha$ such that $\maroracle(\alpha)$ is strictly dominated by the trivial randomization procedure. To be more specific, for all $\mu>0$ and $\pi_0\in(0,1)$, there exist $\alpha_0=\alpha_0(\mu,\pi_0)$ such that $\FNR(\NP^n(\alpha)) > \pi_1(1 - \frac{\alpha}{\pi_0})$ and $\EFN(\NP^n(\alpha)) > \pi_1(1 - \frac{\alpha}{\pi_0})$ for any $\alpha\in(0,\alpha_0)$.

    \item Finally, unlike the FNR-FDR curve which can be entirely dominated by the baseline for some sufficiently weak signal, the EFN-FDR curve is never entirely above the baseline. Namely, for all $\mu>0$ and $\pi_0\in(0,1)$, there exist $\alpha_1=\alpha_1(\mu,\pi_0)$ such that $\EFN(\NP^n(\alpha)) < \pi_1(1 - \frac{\alpha}{\pi_0})$ for any $\alpha\in(\alpha_1,1)$.
\end{itemize}
\begin{figure}[thpb]
	\centering
	\includegraphics[scale=0.46]{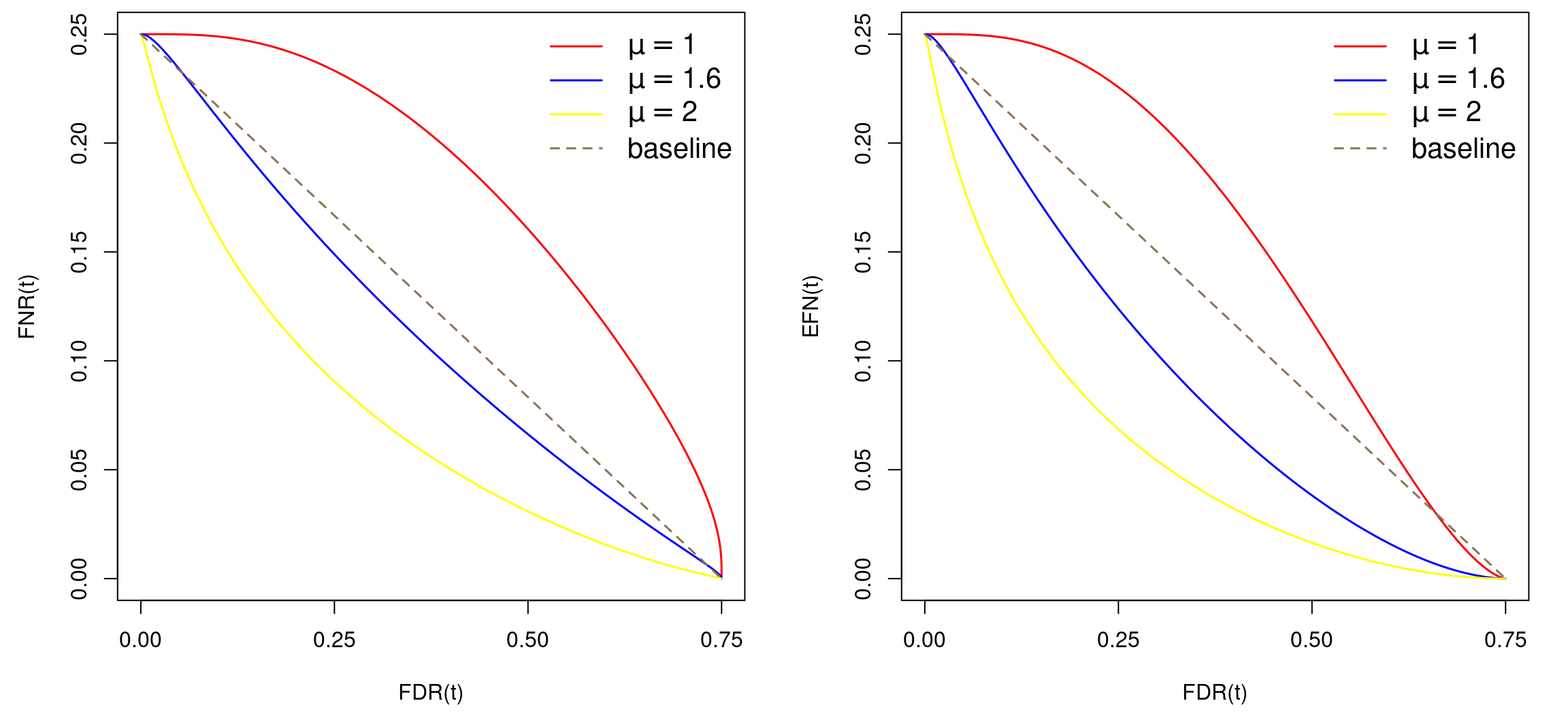}
	\caption{FNR-FDR (left) and EFN-FDR (right) curves of $\NP^n$ for $\mu= 1, 1.6, 2$ and $\pi_0=0.75$ in Gaussian mixture models. The dashed line stands for the baseline which randomizes between all rejections and all acceptances.}
	\label{fig:normal_model}
\end{figure}

Furthermore, as long as the FNR-FDR curve of $\NP^n$ is non-convex, \ie, there exists $\lambda\in(0,1)$ and $t_1<t<t_2$ such that $\lambda\FDR(t_1)+(1-\lambda)\FDR(t_2)=\FDR(t)$ and $\lambda\FNR(t_1)+(1-\lambda)\FNR(t_2)<\FNR(t)$, then $\delta^n(t)$ is strictly dominated by the rule which randomizes between $\delta^n(t_1)$ and $\delta^n(t_2)$. In general, we will show in the next section that the minimized FNR is asymptotically achieved by randomizing between two separable rules. 

Let us emphasize that the suboptimality of separable rules is not due to the allowance for randomization, because we can simulate the randomized rule by deterministic ones without affecting the performance in the asymptotic setting. As a result, even if we restrict to deterministic rules, the asymptotically optimal FDR control is still given by compound rules rather than separable ones. See the discussion in \prettyref{sec:MainThm} after \prettyref{thm:ora_FNR_vs_ora_mFNR}.

\subsection{Main results}\label{sec:MainThm}

One of the main contributions of this paper is the asymptotic solution of the optimization problem \eqref{eq:opt_prob_FDR&FNR}. Along the way, we will characterize the relations between the optimal controls of the FDR and mFDR, discuss the limitations of the FDR control and how to remedy them. The following theorem, which is the first main result of this paper, characterizes the large-sample fundamental limit of the FDR control in multiple testing. (Similar results hold for optimal FDR-EFN tradeoff in \eqref{eq:opt_prob_FDR&EFN}. The formal statements and proofs are postponed to \prettyref{sec:EFN}.)
\begin{theorem}[Asymptotically optimal FDR control] 
\label{thm:ora_FNR_vs_ora_mFNR}
For the optimization problem \prettyref{eq:opt_prob_FDR&FNR} in the two-group random mixture model, the limit
\[
\optFNR(\alpha)\triangleq \lim\limits_{n\to\infty}\optFNR_n(\alpha)
\] 
exists, which, as a function of $\alpha$, is given by the greatest convex minorant (GCM) of the function $\optmFNR$, \ie \[\optFNR(\alpha) = \sup \{C(\alpha): C(t) \leq \optmFNR(t),\forall\,t\in[0,1],\,C(\cdot) \text{ is convex on }[0,1]\}.\] Furthermore, for any $\alpha\in[0,1]$, there exist constants $\alpha_1$ and $\alpha_2$, $0\leq \alpha_1\leq\alpha\leq \alpha_2\leq 1$, such that the following randomized decision rule, referred to as the oracle,
\begin{align}\label{eq:asym_opt_sol_of_FDR&FNR}
    \oracle(\alpha) = 
    \begin{cases}
    \maroracle(\alpha_1) & \text{with probability }\frac{\alpha_2-\alpha}{\alpha_2-\alpha_1}\\
    \maroracle(\alpha_2) & \text{with probability }\frac{\alpha-\alpha_1}{\alpha_2-\alpha_1}
    \end{cases}
\end{align}
achieves $\FDR(\oracle(\alpha))\leq\alpha$ and $\lim\limits_{n\to\infty}\FNR(\oracle(\alpha))=\optFNR(\alpha)$. 
(If $\alpha_1=\alpha_2=\alpha$, then $\oracle(\alpha)\equiv\maroracle(\alpha)$.)
\end{theorem}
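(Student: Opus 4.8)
The plan is to sandwich $\optFNR_n(\alpha)$ between matching asymptotic bounds. Throughout, write $\psi$ for the greatest convex minorant of $\optmFNR$ on $[0,1]$; since $\optmFNR$ is continuous and non-increasing, so is $\psi$, with $0\le\psi\le\optmFNR(0)\le\pi_1$ and $\psi(\pi_0)=0$. The first step is the observation that, for each fixed $n$, the achievable set $\{(\FDR(\delta^n),\FNR(\delta^n)):\delta^n\}$ is convex: randomizing between two rules (toss an independent coin, then run one of them) is again a valid Markov kernel $\calX^n\to\{0,1\}^n$, and both $\FDR$ and $\FNR$, being expectations, are affine under it. Hence $\alpha\mapsto\optFNR_n(\alpha)$ is convex and non-increasing, and it suffices to prove
\[
\limsup_{n\to\infty}\optFNR_n(\alpha)\le\psi(\alpha)\qquad\text{and}\qquad\liminf_{n\to\infty}\optFNR_n(\alpha)\ge\psi(\alpha);
\]
together these give existence of the limit and its value, and the upper bound will be witnessed by the rule $\oracle(\alpha)$ in \eqref{eq:asym_opt_sol_of_FDR&FNR}.

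For the upper bound I would fix $\alpha$ and take $\alpha_1\le\alpha\le\alpha_2$ to be the endpoints of the maximal interval through $\alpha$ on which $\psi$ is affine and agrees with $\optmFNR$ (so $\psi(\alpha_j)=\optmFNR(\alpha_j)$ and $\psi(\alpha)=\lambda_1\optmFNR(\alpha_1)+\lambda_2\optmFNR(\alpha_2)$ with $\lambda_1=\tfrac{\alpha_2-\alpha}{\alpha_2-\alpha_1},\ \lambda_2=\tfrac{\alpha-\alpha_1}{\alpha_2-\alpha_1}$; if $\alpha$ is already a contact point, take $\alpha_1=\alpha_2=\alpha$). By \prettyref{thm:optimal_sol_of_mFDR&mFNR}, $\maroracle(\alpha_j)$ is the separable optimal mFDR rule (a single fixed coordinatewise Lfdr-threshold test) with $\mFDR(\maroracle(\alpha_j))=\alpha_j$ and $\mFNR(\maroracle(\alpha_j))=\optmFNR(\alpha_j)$, and under the two-group model its decisions $\delta_1,\dots,\delta_n$ are i.i.d.\ $\Bern(p_j)$ with $p_j>0$. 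Two facts then finish this direction. First, an exact computation based on $\Expect\big[\tfrac{1}{1+\Binom(n-1,p)}\big]=\tfrac{1-(1-p)^n}{np}$ gives the finite-$n$ identity $\FDR(\maroracle(\alpha_j))=\mFDR(\maroracle(\alpha_j))\,(1-(1-p_j)^n)\le\alpha_j$. Second, the law of large numbers makes the numerator and denominator of $\FNP$ concentrate, so $\FNR(\maroracle(\alpha_j))\to\mFNR(\maroracle(\alpha_j))=\optmFNR(\alpha_j)$. Since $\FDR$ and $\FNR$ are affine under the mixture in \eqref{eq:asym_opt_sol_of_FDR&FNR}, $\FDR(\oracle(\alpha))\le\lambda_1\alpha_1+\lambda_2\alpha_2=\alpha$ for every $n$, while $\FNR(\oracle(\alpha))\to\lambda_1\optmFNR(\alpha_1)+\lambda_2\optmFNR(\alpha_2)=\psi(\alpha)$, as claimed.

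For the lower bound I would work conditionally on $X^n$, writing $\ell_i\triangleq\Lfdr(X_i)$. Since $\theta^n\to X^n\to\delta^n$ is Markov, conditionally on $(X^n,\delta^n)$ the $\theta_i$ are independent $\Bern(1-\ell_i)$, so $\sum_i\delta_i(1-\theta_i)$ has conditional mean $\sum_i\delta_i\ell_i$ and variance $O(\sum_i\delta_i)$, and likewise for $\sum_i(1-\delta_i)\theta_i$. A Chebyshev bound then shows, uniformly over all (compound, randomized) rules, that in the regime where the rejected and accepted sets both have linear size,
\[
\FDP(\delta^n)=\frac{\sum_i\delta_i\ell_i}{\sum_i\delta_i}+o_P(1),\qquad \FNP(\delta^n)=\frac{\sum_i(1-\delta_i)(1-\ell_i)}{\sum_i(1-\delta_i)}+o_P(1),
\]
the two degenerate regimes being handled separately (if few coordinates are rejected then $\FNP\ge\pi_1-o(1)\ge\psi(\FDP)$; if few are accepted then $\FDP\ge\pi_0-o(1)$, so $\psi(\FDP)=o(1)\le\FNP$). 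Fixing the realization of $X^n$ and sorting $\ell_{(1)}\le\cdots\le\ell_{(n)}$, for a rejection set of any given size $m$ both right-hand sides above are simultaneously minimized by rejecting the $m$ smallest $\ell_i$; hence $(\FDP,\FNP)$ lies (up to $o_P(1)$) to the upper-right of a point on the empirical frontier $\big\{\big(\tfrac1m\sum_{j\le m}\ell_{(j)},\ \tfrac1{n-m}\sum_{j>m}(1-\ell_{(j)})\big)\big\}_m$, which is a decreasing scatter, so $\FNP\ge\bar h_n(\FDP)-o_P(1)$ where $\bar h_n$ is the (non-increasing, convex) lower envelope of this frontier. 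By the law of large numbers for order statistics the frontier converges to the curve $\{(\Expect[\ell\mid\ell\le\tau],\Expect[1-\ell\mid\ell>\tau])\}_\tau$, which is precisely the graph of $\optmFNR$, whence $\bar h_n\to\psi$ uniformly. Thus $\FNP(\delta^n)\ge\psi(\FDP(\delta^n))-o_P(1)$ in all cases, and taking expectations, using convexity of $\psi$ (Jensen) and monotonicity,
\[
\FNR(\delta^n)\ge\Expect[\psi(\FDP(\delta^n))]-o(1)\ge\psi(\Expect[\FDP(\delta^n)])-o(1)=\psi(\FDR(\delta^n))-o(1)\ge\psi(\alpha)-o(1).
\]
Infimizing over feasible $\delta^n$ and letting $n\to\infty$ gives the lower bound.

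I expect the lower bound to be the crux: the delicate points are making the conditional concentration of $\FDP$ and $\FNP$ genuinely \emph{uniform} over arbitrary compound randomized rules (including the two degenerate regimes), and establishing the uniform convergence of the lower convex envelope $\bar h_n$ of the random empirical frontier to the GCM of $\optmFNR$, so that the error terms survive passage through the outer expectation. The remaining ingredients — the binomial identity and LLN concentration for separable rules, the coordinatewise optimality of rejecting the smallest Lfdr values given the sorted sample, Jensen's inequality, and the elementary structure of greatest convex minorants (continuity and the contact-point description used to choose $\alpha_1,\alpha_2$) — are routine, and rely on the regularity of $\optmFNR$ and the characterization of the optimal mFDR rule established earlier.
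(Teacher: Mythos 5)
Your proposal is sound and reaches the theorem by a route close in substance to the paper's, though packaged differently. The achievability direction is essentially identical: the paper also randomizes between the two NP rules at the contact points of the GCM, bounds the FDR of each separable component via the Storey-type identity $\FDR=\mFDR\cdot\Prob\left(\sum_i\delta_i>0\right)$ (your binomial computation is exactly \prettyref{lmm:eq_for_pFDR&pFNR}), and in fact only needs $\FNR\le\mFNR$ rather than convergence, since the matching lower bound forces equality in the limit. The converse is where you diverge: the paper first reduces to rules that reject the $K$ smallest Lfdr values (\prettyref{lmm:opt_prob_FDR&FNR}), introduces the auxiliary program $b^{**}(\alpha)=\inf\{\Expect[b(U)]:\Expect[a(U)]\le\alpha\}$ over randomization variables $U$, identifies $b^{**}$ with the GCM of $b^*=\optmFNR$ by a Carath\'eodory argument (\prettyref{lmm:property_of_b_doule_star}), and maps any near-optimal $K_n$ to a feasible $U$; you instead prove the pathwise inequality $\FNP\ge\psi(\FDP)-o_P(1)$ against the lower convex envelope of the empirical frontier and finish with Jensen. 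The two arguments carry the same content: both hinge on the uniform concentration of $\frac1k\sum_{j\le k}W_{(j)}$ around $a(k/n)$ for $k\ge\tau n$ (the paper's \prettyref{lmm:approx_of_obj&constr}, proved via a DKW argument), which you correctly identify as the crux but assert rather than prove. Your Jensen step is arguably cleaner than the paper's explicit construction of a feasible $U$, at the price of additionally requiring uniform convergence of the random convex envelope $\bar h_n$ to $\psi$, a fact the paper only needs later for the data-driven procedure.

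One concrete slip: $\optmFNR=b^*$ is \emph{not} continuous in general. The paper only establishes that it is decreasing and right-continuous (\prettyref{lmm:properties_of_b}), and it is genuinely discontinuous, e.g., piecewise constant, in the second simulation example of \prettyref{sec:Simulation}. Consequently the existence of contact points $\alpha_1,\alpha_2$ with $\psi(\alpha_j)=\optmFNR(\alpha_j)$, and the continuity of $\psi$ near $\pi_0$ invoked in your degenerate regimes, cannot be read off from continuity of $\optmFNR$; the paper obtains them from lower semicontinuity of $b^*$ together with a compactness and Fenchel--Eggleston--Carath\'eodory argument on the filled-in graph (proof of \prettyref{lmm:property_of_b_doule_star}). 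This is repairable within your outline, but it is not merely routine bookkeeping.
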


Notably, the oracle in \prettyref{thm:ora_FNR_vs_ora_mFNR} that achieves the optimal tradeoff between FDR and FNR is \emph{not} separable if $\alpha_1\neq\alpha_2$. In fact, it randomizes between two separable rules (two NP tests \prettyref{eq:deltaNP} with appropriate parameters). However, as we mentioned before, the suboptimality of separable rules is not due to the allowance for
randomization. It is easy to see\footnote{To see this, note that any randomized rule $\delta^n$ such that $\delta^n=\maroracle(c_1)$ with probability $p$ and $\delta^n=\maroracle(c_2)$ with probability $1-p$ can be simulated by a deterministic rule that thresholds the last observation: If $X_n<c$, apply $\maroracle(c_1)$, and otherwise, apply $\maroracle(c_2)$, where $c$ is a constant such that $\Prob(X_n<c)=p$. Clearly, this rule has the same asymptotic performance as $\delta^n$.} that the characterization of the asymptotic $\optFNR(\alpha)=\lim_{n\to\infty}\optFNR_n(\alpha)$ in \prettyref{thm:ora_FNR_vs_ora_mFNR} holds verbatim even if we restrict to deterministic rules in the definition \prettyref{eq:opt_prob_FDR&FNR} of $\optFNR_n(\alpha)$. 
In this perspective, randomization may be viewed more as a proof technique\footnote{That randomization is useful may appear counterintuitive in view of the classical results on testing simple hypotheses: Although the Neyman-Pearson theory may stipulate randomization for finite sample sizes, in the large-sample limit simple likelihood ratio tests are optimal in terms of the leading-order asymptotics \cite{Hoeffding65,blahut1974hypothesis}. In contrast, the oracle \prettyref{eq:asym_opt_sol_of_FDR&FNR} randomizes for optimal FDR control even when $n$ tends to infinity.}
for establishing the fundamental limit in \prettyref{thm:ora_FNR_vs_ora_mFNR}. Since such randomized rules may cause high variability of the FDP\footnote{As shown in \prettyref{ex:gaussian}, when the signal is very weak, the curve $\optmFNR(\alpha)$ is entirely above the linear line $\pi_1(1-\frac{\alpha}{\pi_0})$, so the oracle \prettyref{eq:asym_opt_sol_of_FDR&FNR} simply randomizes between all rejections and all acceptances, causing the realized FDP to be either $0$ or very close to $\pi_0$ which is usually much larger than the desired level $\alpha$.}, they are less favored in practice where the tests are usually performed once. A natural question is whether the best deterministic rule (which is certainly compound) may remedy the fluctuation of the randomized rule. In fact, 
for the Gaussian mixture model, when the signal is sufficiently, it is shown in \cite[Proposition F.1]{rosset2022optimal} that the best deterministic rule has the same behavior as the randomized oracle, either accepting all hypotheses or none. These results reveal the limitation of the FDR as a criterion, which fails to take into account the variability of the FDP. Motivated by this observation, next we consider the problem of \emph{FDP control with high probability}, where the probability that the FDP exceeds a threshold tends to zero as the number of hypotheses goes to infinity. Under this criterion, the procedure $\maroracle(\alpha)$ turns out to be optimal and the best tradeoff is given by the $\optmFNR(\alpha)$ curve. This is the second main result of this paper.

\begin{theorem}[Optimal FDP control with high probability]
\label{thm:high_prob_formulation}
In the two-group random mixture model, fix a level $\alpha$. 
\begin{itemize}
	\item For any $\beta\geq \optmFNR(\alpha)$, the procedure $\maroracle(\alpha)$ 
	in \prettyref{eq:deltaNP}  satisfies that for any $\epsilon > 0$,
\[
\Prob\pth{\FDP(\maroracle(\alpha)) \leq \alpha + \epsilon} = 1-o(1) \text{ and } \Prob\pth{\FNP(\maroracle(\alpha)) \leq \beta + \epsilon} = 1-o(1).
\]

	\item Conversely, suppose that a sequence of decision rules $\delta^n$ satisfies 
	$\Prob\pth{\FDP(\delta^n) \leq \alpha} = 1-o(1)$ as $n\to\infty$ and $\FNR(\delta^n) \leq \beta$. Then $\beta \geq \optmFNR(\alpha)$.  
\end{itemize}
\end{theorem}

The peril of controlling only the FDR, the expectation of the FDP, 
which does not control its fluctuation, was first noted by Genovese and Wasserman \cite{genovese2006exceedance}. To resolve this issue, they proposed to control instead the probability that the FDP exceeds a prescribed threshold, called the exceedance control of the FDP. (See \cite{chi2008positive,lehmann2012generalizations,sun2015false,dohler2020controlling,basu2021empirical} for more references.) The difference between the setup in \prettyref{thm:high_prob_formulation} and the standard exceedance control of the FDP is that \prettyref{thm:high_prob_formulation} requires that the FDP fulfills the desired level with high probability as opposed to a fixed probability, as the number of hypotheses tends to infinity.

\subsection{Optimality and suboptimality of Benjamini-Hochberg}\label{sec:SuboptBH}

As one of the most commonly used procedures for FDR control in practice, 
the Benjamini-Hochberg (BH) procedure has been extensively studied in previous work \cite{benjamini1995controlling,benjamini2001control,genovese2002operating,ferreira2006benjamini}. 
Assuming that the observations $X^n$ are the $p$-values and $f_0$ is $\Unif(0,1)$, the oracle (i.e., with known $\pi_0$)  BH rule is given by 
\begin{align}\label{eq:BH}
\delta^n_{\rm BH}(\alpha) = \pth{\indc{X_i \leq X_{(i^*)}}}_{i=1,\cdots,n}
\end{align}
where $i^* = \max\{i=0,1,\cdots,n:\,X_{(i)}\leq\frac{i\alpha}{n\pi_0}\}$ and $X_{(0)}\triangleq  0 \leq X_{(1)}\leq\cdots\leq X_{(n)}$ are the order statistics of $X_1,\ldots,X_n$. It is well-known that the (oracle) BH is asymptotically equivalent to rejecting all $p$-values below a fixed threshold \cite{genovese2002operating}; in particular, the limiting procedure for the oracle BH coincides with the procedure $\maroracle$ in \prettyref{eq:deltaNP}. 
The asymptotic properties of the oracle BH are summarized in the following proposition proved in \prettyref{app:asym-BH}; similar results can be found in \cite{genovese2002operating}.
\begin{proposition}\label{prop:asym-BH}
Suppose that $f_0(x)=\indc{0<x<1}$ and $f_1(x)$ is strictly decreasing on $(0,1)$ and the Lfdr $W_i$ defined in \prettyref{eq:Lfdri} is a continuous random variable. For any $\alpha\in(\frac{\pi_0}{f(0+)},\pi_0)$, where $f(0+)=\lim_{x\to 0^+}f(x)$, we have $\mFDR(\delta_{\rm BH}^n(\alpha))\to \alpha$, $\mFNR(\delta_{\rm BH}^n(\alpha))\to \optmFNR(\alpha)$, and $\FDP(\delta_{\rm BH}^n(\alpha)) \toprob \alpha$, $\FNP(\delta_{\rm BH}^n(\alpha))\toprob\optmFNR(\alpha)$, as $n\to\infty$.
\end{proposition}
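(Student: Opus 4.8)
The plan is to analyze the oracle BH rule through the empirical distribution function of the $p$-values and show that, in the large-$n$ limit, the data-driven BH threshold converges to the fixed threshold used by $\maroracle(\alpha)$. Let $F_n(t) \triangleq \frac{1}{n}\sum_{i=1}^n \indc{X_i \leq t}$ be the empirical CDF and let $F(t) = \pi_0 t + \pi_1 F_1(t)$ be the population marginal CDF (here $F_1$ is the CDF of $f_1$), so that by Glivenko--Cantelli $\sup_t |F_n(t) - F(t)| \to 0$ almost surely. The BH cutoff $X_{(i^*)}$ can be rewritten: since $X_{(i)} \leq \frac{i\alpha}{n\pi_0}$ is equivalent to $\frac{n\pi_0}{\alpha} X_{(i)} \leq i$, and $F_n(X_{(i)}) = i/n$ for continuous observations, the BH threshold $\hat t_n \triangleq X_{(i^*)}$ is (up to $o(1)$ corrections) the largest $t$ satisfying $\pi_0 t / \alpha \leq F_n(t)$, i.e.\ $\pi_0 t \leq \alpha F_n(t)$. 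Define the population analogue $t^* \triangleq \sup\{t \in [0,1] : \pi_0 t \leq \alpha F(t)\}$. First I would show that under the stated hypotheses ($f_1$ strictly decreasing, so $F$ is strictly concave) and the condition $\alpha \in (\pi_0/f(0+), \pi_0)$, the equation $\pi_0 t = \alpha F(t)$ has a unique solution $t^* \in (0,1)$ at which the line $\pi_0 t/\alpha$ crosses $F$ transversally from below; the lower bound $\alpha > \pi_0/f(0+)$ guarantees $F'(0+) = f(0+) > \pi_0/\alpha$ so the crossing is nontrivial, and $\alpha < \pi_0$ keeps $t^* < 1$.

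Next I would establish $\hat t_n \toprob t^*$. This is a standard argument: the function $g_n(t) \triangleq \alpha F_n(t) - \pi_0 t$ converges uniformly to $g(t) \triangleq \alpha F(t) - \pi_0 t$, which is strictly positive on $(0,t^*)$, zero at $t^*$, and strictly negative on $(t^*, 1]$ by strict concavity. Hence the largest zero crossing of $g_n$ must converge to $t^*$ in probability (a standard argmax/level-crossing continuity argument, using that $g$ is bounded away from $0$ outside any neighborhood of $t^*$). Then I would verify that the fixed-threshold rule $(\indc{X_i \leq t^*})_i$ coincides with $\maroracle(\alpha)$: thresholding $p$-values at $t^*$ is the same as thresholding $\Lfdr(X_i) = \pi_0 f_0(X_i)/f(X_i) = \pi_0/f(X_i)$ at level $\pi_0/f(t^*)$ since $f = \pi_0 + \pi_1 f_1$ is decreasing (so $\{X_i \leq t^*\} = \{f(X_i) \geq f(t^*)\} = \{W_i \leq \pi_0/f(t^*)\}$), and one checks this is exactly the NP cutoff achieving Type-I error $\alpha$: indeed $\pi_0 t^* = \alpha F(t^*) = \alpha \Prob(X_1 \leq t^*)$ rearranges to $\Prob_0(X_1 \leq t^*) = t^* = \alpha \Prob(X_1 \leq t^*)/\pi_0$, which is consistent with $\mFDR = \alpha$; more directly, the mFDR computation in Example~\ref{ex:gaussian} gives $\mFDR((\indc{W_i \leq c})_i) = \Expect[W_1 \mid W_1 \leq c]$, and I would show this equals $\alpha$ at $c = \pi_0/f(t^*)$ precisely when $\pi_0 t^* = \alpha F(t^*)$.

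Finally, I would upgrade the threshold convergence to convergence of the four functionals. For the FDP, write $\FDP(\delta_{\rm BH}^n(\alpha)) = \frac{\sum_i \indc{X_i \leq \hat t_n}(1-\theta_i)}{1 \vee \sum_i \indc{X_i \leq \hat t_n}}$; sandwiching $\hat t_n$ between $t^* \pm \epsilon$ with probability $1-o(1)$ and applying the uniform LLN to both the numerator (which concentrates on $\pi_0 \Prob_0(X_1 \leq t) = \pi_0 t$) and denominator (concentrating on $F(t)$), continuity of $t \mapsto \pi_0 t / F(t)$ at $t^*$ gives $\FDP \toprob \pi_0 t^*/F(t^*) = \alpha$. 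The same sandwiching handles $\FNP \toprob \pi_1(F_1(1)-F_1(t^*))/(1-F(t^*)) = \optmFNR(\alpha)$ (matching the formula $\Expect[1-W_1 \mid W_1 > c]$ from Example~\ref{ex:gaussian}), and since FDP and FNP are bounded in $[0,1]$, convergence in probability plus bounded convergence yields $\mFDR \to \alpha$ and $\mFNR \to \optmFNR(\alpha)$ — here I should be slightly careful that $\mFDR$ is a ratio of expectations, not the expectation of the ratio, so I would instead argue directly that $\Expect[\sum_i \indc{X_i \leq \hat t_n}(1-\theta_i)]/n \to \pi_0 t^*$ and $\Expect[\sum_i \indc{X_i \leq \hat t_n}]/n \to F(t^*)$ using dominated convergence on the empirical-process level.

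\textbf{Main obstacle.} The delicate point is the convergence $\hat t_n \toprob t^*$ and, in particular, ruling out that the BH procedure latches onto a spurious near-zero crossing of $g_n$ (the BH threshold is defined via a maximum over $i$, so one must ensure no fluctuation of the empirical process near $t = 0$ produces a large index $i^*$ with tiny $X_{(i^*)}$). The hypothesis $\alpha > \pi_0/f(0+)$ is exactly what prevents this: it forces $g$ to be strictly increasing at the origin, so $g_n > 0$ on a fixed neighborhood of $0$ with high probability and the relevant crossing is the genuine interior one at $t^*$. Making this rigorous requires a uniform (Dvoretzky--Kiefer--Wolfowitz or Glivenko--Cantelli) control of $F_n - F$ combined with the strict-concavity geometry of $F$, which is routine but is where the real content lies.
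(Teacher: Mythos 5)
Your proposal follows essentially the same route as the paper's proof: identify the limiting population threshold $t^*(\alpha)$ solving $\pi_0 t = \alpha F(t)$, show the data-driven BH cutoff converges to it, and then pass to the limits of the four functionals by uniform empirical-process control. The paper cites the threshold convergence from Genovese--Wasserman (which you reconstruct directly via the strict concavity of $F$ — a valid and slightly more self-contained route), and the paper handles the joint control of $(X_i,\theta_i)$ via a bivariate Dvoretzky--Kiefer--Wolfowitz inequality applied to $\tilde F_n(t,s)=\frac1n\sum_i\indc{X_i\le t,\theta_i\le s}$ evaluated at the random threshold, whereas you propose a sandwiching argument at deterministic points $t^*\pm\epsilon$; both are standard and interchangeable here. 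One small slip in your ``main obstacle'' remark: the pathology that $\alpha>\pi_0/f(0+)$ rules out is the BH cutoff collapsing toward $0$ (i.e.\ $i^*/n\to 0$), not ``a large index $i^*$ with tiny $X_{(i^*)}$,'' which is self-contradictory — but the role of the hypothesis is correctly identified and the argument is otherwise sound.
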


\prettyref{prop:asym-BH} shows that the oracle BH asymptotically achieves the optimal mFDR-mFNR tradeoff\footnote{This optimality may not hold if the map from raw observations to p-values is not invertible. For an example see the two-sided test in the Gaussian mixture model in \cite{sun2007oracle}.}.
However, \prettyref{thm:ora_FNR_vs_ora_mFNR} shows that it is suboptimal for 
minimizing the FNR subject to the FDR constraint whenever $\optFNR(\alpha)<\optmFNR(\alpha)$.
This happens, for instance, 
in the simple normal location model in \prettyref{ex:gaussian}, 
where the oracle BH is asymptotically dominated by the trivial rule for some FDR level or even any FDR level when the signal is weak, i.e., $\mu$ close to zero.
On the other hand, oracle BH is optimal for controlling the FDP with high probability, in view of \prettyref{thm:high_prob_formulation}.

In addition to BH, another adaptive procedure was proposed by Sun and Cai in \cite[Section 4.2]{sun2007oracle}, which rejects according to the moving average of the ordered approximated Lfdr at the observations. It was proved in \cite[Theorem 5 and 6]{sun2007oracle} that under appropriate assumptions this data-driven procedure asymptotically achieves mFDR and mFNR at level $\alpha$ and the optimal $\optmFNR(\alpha)$, respectively. 
By slightly modifying its proof, its FDR and FNR can be shown to converge to the same. 
Therefore, the preceding conclusion on oracle BH also applies to this procedure, which, in particular, does not optimally control the FDR either. We provide some numerical comparison of these procedures in \prettyref{sec:Simulation}.

\subsection{Organization}\label{sec:Organization}
The rest of the paper is organized as follows. \prettyref{sec:Preliminaries} introduces the needed auxiliary definitions and lemmas for analyzing the optimization problem \eqref{eq:opt_prob_FDR&FNR}, and reduce the problem \eqref{eq:opt_prob_mFDR&mFNR} to a more explicit form. \prettyref{sec:proofs_of_main_thms} gives the proofs of our main theorems. In \prettyref{sec:DataDrivenProcedure}, we propose a data-driven procedure and prove that it asymptotically achieves the oracle performance given in \prettyref{thm:ora_FNR_vs_ora_mFNR}. Numerical experiments on synthetic data are carried out in \prettyref{sec:Simulation} to compare this data-drive procedure with some other adaptive methods such as the BH. We extend our results to the objective of maximizing the expected number of true discoveries in \prettyref{sec:EFN}, to fixed non-null proportion in \prettyref{sec:Fixed-group}, 
and to models with weakly dependent data in \prettyref{sec:dependent}. We conclude the paper with a short discussion in \prettyref{sec:Discussion}. Due to space constraints, omitted proofs are given in the supplementary material.

\section{Preliminaries}\label{sec:Preliminaries}
\subsection{Auxiliary definitions and lemmas}\label{sec:AuxiDefs}

In this subsection, we introduce some definitions and lemmas that will be used later to prove our main theorems. Let $W\triangleq \Lfdr(X)=\pi_0f_0(X)/f(X)$, where $X\sim f$ and $G(t) = \Prob\pth{W \leq t}$ be the cumulative distribution function (CDF) of $W$. Define its inverse as $G^{-1}(u) \triangleq  \inf\{t:\,G(t) \geq u\}$, $u\in[0,1]$. Let $\bar{G}(t) \triangleq  \Prob\pth{W < t}$. 
For any $u\in[0,1]$, we have $G\pth{G^{-1}(u)}\geq u$ and $\bar{G}\pth{G^{-1}(u)}\leq u$ by the definition of $G^{-1}$. Then there exists $p(u)\in[0,1]$ such that
\begin{align}
    \bar{G}\pth{G^{-1}(u)} + p(u)\cdot\Prob\pth{W=G^{-1}(u)} = u.\label{eq:p(u)}
\end{align}
Define a random variable
\begin{align}\label{eq:S_u}
    S_u=
    \begin{cases}
     1 & \text{if } W<G^{-1}(u)\\
     \Bern(p(u)) & \text{if } W = G^{-1}(u)\\
     0 & \text{if } W>G^{-1}(u)
    \end{cases}.
\end{align}
Then $S_u\sim\Bern(u)$ thanks to \eqref{eq:p(u)}. Define two functions $a(\cdot)$ and $b(\cdot)$ on $[0,1]$ as
\begin{align}\label{eq:a(u)}
a(u) = 
\begin{cases}
\Expect[W\,|\,S_u = 1] & \text{if } u > 0\\
0 & \text{if } u = 0
\end{cases}
\end{align}
and
\begin{align}\label{eq:b(u)}
b(u) = 
\begin{cases}
\Expect[1-W\,|\,S_u = 0] & \text{if } u < 1\\
0 & \text{if } u = 1
\end{cases}.
\end{align}
We have the following properties of $a$ and $b$. The proof is given in \prettyref{app:Property-a&b}.

\begin{lemma}\label{lmm:properties_of_a&b}
$a(u)$ is increasing on $[0,1]$ and continuous on $(0,1]$; $b(u)$ is decreasing on $[0,1]$ and continuous on $[0,1)$. 
\end{lemma}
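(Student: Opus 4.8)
The plan is to establish the quantile representations
\[
a(u) = \frac{1}{u}\int_0^u G^{-1}(v)\,\diff v \quad (0<u\le 1), \qquad b(u) = \frac{1}{1-u}\int_u^1 \pth{1-G^{-1}(v)}\,\diff v \quad (0\le u<1),
\]
with $a(0)=0$, $b(1)=0$. All four claimed properties then reduce to elementary facts about averages of monotone, $[0,1]$-valued functions, since $G^{-1}$ is non-decreasing with values in $[0,1]$ (because $W=\Lfdr(X)\in[0,1]$).

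First I would reduce $a$ and $b$ to these integrals by a single coupling. Read the $\Bern(p(u))$ in the definition \eqref{eq:S_u} of $S_u$ as $\indc{R\le p(u)}$ for one fixed $R\sim\Unif(0,1)$ independent of $W$ (used for all $u$ at once), and set $V \triangleq \bar G(W) + R\cdot\mu(\{W\})$, the distributional transform of $W$, where $\mu=\mathcal L(W)$; then $V\sim\Unif(0,1)$ and $G^{-1}(V)=W$ a.s. I would check $\{S_u=1\}=\{V\le u\}$ up to a null event: on $\{W<G^{-1}(u)\}$ one has $G(W)<u$, so $V\le \bar G(W)+\mu(\{W\})=G(W)<u$ and $S_u=1$; on $\{W>G^{-1}(u)\}$ one has $\bar G(W)\ge G(G^{-1}(u))\ge u$, so $V\ge u$ and $S_u=0$; on $\{W=G^{-1}(u)\}$ the defining relation \eqref{eq:p(u)} gives $\{V\le u\}=\{R\le p(u)\}=\{S_u=1\}$; and $\Prob(V=u)=0$. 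Conditioning on $S_u=1$ (resp.\ $S_u=0$) then yields $a(u)=\Expect[G^{-1}(V)\mid V\le u]$ and $b(u)=\Expect[1-G^{-1}(V)\mid V>u]$, which are the displayed integrals. (Alternatively one can skip the transform and verify $\Expect[W\indc{S_u=1}]=\int_0^u G^{-1}(v)\,\diff v$ directly from the conditional-value-at-risk identity for quantiles.)

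Granting the representations: for monotonicity of $a$, fix $0<u_1<u_2\le1$ and split $\int_0^{u_2}G^{-1}=\int_0^{u_1}G^{-1}+\int_{u_1}^{u_2}G^{-1}$; since $G^{-1}$ is non-decreasing, $\frac1{u_2-u_1}\int_{u_1}^{u_2}G^{-1}\ge G^{-1}(u_1)\ge a(u_1)$, so $a(u_2)=\tfrac{u_1}{u_2}a(u_1)+\tfrac{u_2-u_1}{u_2}\cdot\tfrac1{u_2-u_1}\int_{u_1}^{u_2}G^{-1}\ge a(u_1)$; at the left endpoint, $a(0)=0\le G^{-1}(0+)=\lim_{u\downarrow0}a(u)$ since $W\ge0$. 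For continuity of $a$ on $(0,1]$, note $u\mapsto\int_0^u G^{-1}(v)\,\diff v$ is $1$-Lipschitz (integrand in $[0,1]$) and $u\mapsto 1/u$ is continuous on $(0,1]$. The statements for $b$ follow by the identical argument applied to the non-increasing $[0,1]$-valued function $1-G^{-1}$: one gets $b(u_1)\ge b(u_2)$ for $u_1<u_2<1$, $b(1)=0\le 1-\esssup W=\lim_{u\uparrow1}b(u)$ using $W\le1$, and continuity on $[0,1)$.

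The one delicate step is the coupling: matching the randomized event $\{S_u=1\}$ with the sublevel set $\{V\le u\}$ of a single uniform variable, uniformly in $u$, while correctly handling the atoms of $W$ (which is exactly where $p(u)$ and the internal randomization of $S_u$ are needed). Once that bookkeeping is done, the rest is routine calculus and poses no obstacle.
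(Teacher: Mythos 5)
Your proof is correct, but it takes a genuinely different route from the paper's. The paper proves monotonicity by rewriting $a(u)=G^{-1}(u)-\tfrac1u\,\Expect\!\left[(G^{-1}(u)-W)\indc{W<G^{-1}(u)}\right]$ and chaining inequalities for $a(u')-a(u)$ directly, then gets continuity from $\left|\Expect[W\indc{S_{u'}=1}]-\Expect[W\indc{S_u=1}]\right|\le u'-u$; the atoms of $W$ are handled pointwise through \eqref{eq:p(u)}. You instead pay the bookkeeping cost once, up front, by coupling all $S_u$ to a single $V\sim\Unif(0,1)$ via the distributional transform, after which $a(u)=\tfrac1u\int_0^u G^{-1}$ and $b(u)=\tfrac{1}{1-u}\int_u^1(1-G^{-1})$ and all four assertions reduce to elementary facts about running averages of monotone $[0,1]$-valued functions. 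Your coupling argument is right: on $\{W<G^{-1}(u)\}$ one has $G(W)<u$ and hence $V\le G(W)<u$; on $\{W>G^{-1}(u)\}$ one has $\bar G(W)\ge G(G^{-1}(u))\ge u$, so $V\ge u$ (and $\Prob(V=u)=0$); on $\{W=G^{-1}(u)\}$ the relation \eqref{eq:p(u)} turns $\{V\le u\}$ into $\{R\le p(u)\}$. One tiny inaccuracy: $\lim_{u\uparrow1}b(u)=1-G^{-1}(1-)$, which need not equal $1-\esssup W=1-G^{-1}(1)$; but since $W\le1$ gives $G^{-1}\le1$ everywhere, the limit is still $\ge0=b(1)$, so the conclusion stands. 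Net effect: your argument is cleaner and more modular, at the cost of invoking (and correctly verifying) the distributional-transform coupling, whereas the paper's computation is more pedestrian but entirely self-contained.
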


For any $\alpha\in[0,1]$, define
\begin{align}
\begin{split}
    b^*(\alpha) \triangleq  & \inf_{u\in[0,1]} \,\,b(u)\\
    & \,\,\,\,\,\text{s.t.}\quad a(u) \leq \alpha
\end{split}
\label{eq:opt_prob_a&b}    
\end{align}
which satisfies the following properties (see \prettyref{app:ProofLmm-b^*} for a proof):
\begin{lemma}\label{lmm:properties_of_b}
The infimum $b^*(\alpha)$ is achieved at $u^*(\alpha) \triangleq  \sup\{u\in[0,1]: a(u)\leq\alpha\}$. In particular, $b^*(0)=\pi_1$ and $b^*(\alpha)=0$ for $\alpha\in[\pi_0,1]$. Furthermore, $b^*$ is decreasing and right-continuous on $[0,1]$. 
\end{lemma}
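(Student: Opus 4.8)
The plan is to use the monotonicity of $a$ and $b$ from \prettyref{lmm:properties_of_a&b} to collapse the scalar program \eqref{eq:opt_prob_a&b} to the evaluation of $b$ at a single point. I would first describe the feasible set $F_\alpha\triangleq\{u\in[0,1]:a(u)\le\alpha\}$. Since $a$ is increasing and $a(0)=0\le\alpha$, the set $F_\alpha$ is a nonempty initial segment of $[0,1]$ (if $v\in F_\alpha$ and $u\le v$ then $a(u)\le a(v)\le\alpha$, so $u\in F_\alpha$). Writing $u^*(\alpha)=\sup F_\alpha$, I claim $F_\alpha=[0,u^*(\alpha)]$: the inclusion $[0,u^*(\alpha))\subseteq F_\alpha$ is the initial-segment property, $u>u^*(\alpha)$ forces $u\notin F_\alpha$ by definition of the supremum, and $u^*(\alpha)\in F_\alpha$ holds either trivially (if $u^*(\alpha)=0$) or, when $u^*(\alpha)>0$, by the continuity of $a$ on $(0,1]$, which gives $a(u^*(\alpha))=\lim_{u\uparrow u^*(\alpha)}a(u)\le\alpha$. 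Since $b$ is decreasing, $\inf_{u\in[0,u^*(\alpha)]}b(u)=b(u^*(\alpha))$, attained at $u^*(\alpha)$; hence the infimum in \eqref{eq:opt_prob_a&b} is attained at $u^*(\alpha)$ and $b^*(\alpha)=b(u^*(\alpha))$.

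For the boundary values, recall from \eqref{eq:p(u)} that $S_u\sim\Bern(u)$; in particular $S_1=1$ and $S_0=0$ almost surely, so reading \eqref{eq:a(u)}--\eqref{eq:b(u)} directly gives $a(1)=\Expect[W]=\pi_0$ and $b(0)=\Expect[1-W]=\pi_1$. Therefore, for $\alpha\ge\pi_0$ we have $a(1)\le\alpha$, so $u^*(\alpha)=1$ and $b^*(\alpha)=b(1)=0$. For $\alpha=0$, feasibility forces $a(u)=0$; since $W>0$ almost surely, $a(u)=\Expect[W\indc{S_u=1}]/u>0$ for every $u>0$, so $u^*(0)=0$ and $b^*(0)=b(0)=\pi_1$.

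Monotonicity of $b^*$ is immediate: $\alpha\le\alpha'$ gives $F_\alpha\subseteq F_{\alpha'}$, so $u^*$ is nondecreasing and $b^*(\alpha)=b(u^*(\alpha))\ge b(u^*(\alpha'))=b^*(\alpha')$. For right-continuity I would first show $u^*$ is right-continuous: given $\alpha_k\downarrow\alpha$, the numbers $u^*(\alpha_k)$ decrease to some $L\ge u^*(\alpha)$, and if $L>u^*(\alpha)$ then any $u\in(u^*(\alpha),L)$ lies in $F_{\alpha_k}$ for every $k$, i.e.\ $a(u)\le\alpha_k$, whence $a(u)\le\alpha$ on letting $k\to\infty$, contradicting $u>\sup F_\alpha$; so $u^*(\alpha_k)\downarrow u^*(\alpha)$. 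Then $b^*(\alpha_k)=b(u^*(\alpha_k))\to b(u^*(\alpha))=b^*(\alpha)$, using the continuity of $b$ at $u^*(\alpha)$ when $u^*(\alpha)<1$ and the fact that both sides equal $0$ when $u^*(\alpha)=1$.

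There is no genuine difficulty here; the only care needed is the bookkeeping at the two endpoints, where $a$ may be discontinuous at $0$ and $b$ at $1$. Each of the three assertions — attainment of the infimum, the identity $b^*=b\circ u^*$, and the transfer of right-continuity from $u^*$ to $b^*$ — has to be checked separately in the cases $u^*(\alpha)=0$ and $u^*(\alpha)=1$, using the explicit endpoint values $a(0)=0$, $a(1)=\pi_0$, $b(0)=\pi_1$, $b(1)=0$ together with the monotonicity and sign bounds $a\ge0$ and $0\le b\le b(0)=\pi_1$.
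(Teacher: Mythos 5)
Your proof is correct and follows essentially the same route as the paper's: both reduce everything to the identity $b^*=b\circ u^*$ via the monotonicity of $a$ and $b$ and the continuity of $a$ on $(0,1]$, compute the endpoint values from $a(1)=\pi_0$ and $b(0)=\pi_1$, and obtain right-continuity of $b^*$ by proving right-continuity of $u^*$ and composing with the continuity of $b$ on $[0,1)$. Your argument for the right-continuity of $u^*$ (squeezing via the nested feasible sets $F_\alpha=[0,u^*(\alpha)]$) is a bit more direct than the paper's (which solves $a(u^*(\alpha+\delta_n))=\alpha+\delta_n$ and argues by subsequences, after a case split at $a_0=\lim_{u\to0^+}a(u)$ and $\pi_0$), and your appeal to $W>0$ a.s.\ for the $\alpha=0$ case makes explicit an assumption the paper uses only implicitly; neither difference changes the substance.
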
 

\subsection{Optimal mFDR control}

The optimal procedure which achieves the smallest mFNR subject to a constraint on $\mFDR$ has been investigated in \cite{sun2007oracle,xie2011optimal}. 
As an auxiliary result for proving our main theorems, we formalize this result in the following theorem and give a proof in \prettyref{app:OptRulemFDR} for completeness.
\begin{theorem}[Optimal mFDR control] \label{thm:optimal_sol_of_mFDR&mFNR}
For each $\alpha\in[0,1]$, $\optmFNR(\alpha)=b^*(\alpha)$.
Furthermore, the optimal solution of \eqref{eq:opt_prob_mFDR&mFNR}
is given by $\maroracle(\alpha)= (\NP(X_1;\alpha),\cdots,\NP(X_n;\alpha))$, where 
\begin{align}\label{eq:optimal_sol_of_mFDR&mFNR}
    \NP(X_i;\alpha) \triangleq
    \begin{cases}
     1 & \text{if } W_i < G^{-1}(u^*(\alpha))\\
    \Bern(p(u^*(\alpha))) & \text{if } W_i = G^{-1}(u^*(\alpha))\\
     0 & \text{if } W_i > G^{-1}(u^*(\alpha))
    \end{cases},
\end{align}
$u^*(\alpha) = \sup\{u\in[0,1]: a(u)\leq\alpha\}$ is the optimal solution of \eqref{eq:opt_prob_a&b}, $p(\cdot)$ is defined in \eqref{eq:p(u)}, and $W_i$ is the Lfdr in 
\prettyref{eq:Lfdri}.
\end{theorem}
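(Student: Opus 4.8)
The plan is to prove that the optimal mFDR--mFNR tradeoff \eqref{eq:opt_prob_mFDR&mFNR} equals the optimization problem \eqref{eq:opt_prob_a&b}, i.e.\ $\optmFNR(\alpha)=b^*(\alpha)$, and that the optimum is attained by the i.i.d.\ Neyman--Pearson procedure $\maroracle(\alpha)$ in \eqref{eq:optimal_sol_of_mFDR&mFNR}. The first observation is that both $\mFDR$ and $\mFNR$ are ratios of expectations that depend on the decision rule $\delta^n$ only through the $n$ marginal laws $P_{\delta_i\mid X^n}$, and that by symmetrizing (replacing $\delta^n$ by a uniformly random coordinate permutation of itself, which changes neither $\mFDR$ nor $\mFNR$ since both are permutation-symmetric) one may assume all coordinates have the same conditional marginal. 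More precisely, for any feasible $\delta^n$, define the scalar rule $\bar\delta(x)\triangleq \frac1n\sum_i \Prob[\delta_i=1\mid X_i=x]$ — here I use that $(X_i,\theta_i)$ are i.i.d.\ and that, conditionally on $X_i$, the mixing label $\theta_i$ has posterior $\Prob[\theta_i=0\mid X_i=x]=\Lfdr(x)=W_i$. A direct computation gives
\[
\mFDR(\delta^n)=\frac{\Expect[W\bar\delta(X)]}{\Expect[\bar\delta(X)]},\qquad
\mFNR(\delta^n)=\frac{\Expect[(1-W)(1-\bar\delta(X))]}{\Expect[1-\bar\delta(X)]},
\]
where $(X,W)$ has the marginal law with $W=\Lfdr(X)$. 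So the $n$-dimensional problem collapses to a one-dimensional problem over a single (randomized) test $t\mapsto \bar\delta\in[0,1]$, and in particular does not depend on $n$.

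Next I would solve this scalar problem. The quantity $\Expect[W\bar\delta(X)]$ is the ``mass'' of the rejection region weighted by the posterior null probability, and $\Expect[\bar\delta(X)]$ is its total mass; minimizing $\mFNR$ subject to $\mFDR\le\alpha$ is, after clearing denominators, a linear-fractional program in $\bar\delta$. The key structural fact is that, for a fixed rejection probability $s\triangleq\Expect[\bar\delta(X)]=\Prob[S_s=1]$, the numerator $\Expect[W\bar\delta(X)]$ is minimized by rejecting exactly where $W$ is smallest, i.e.\ by taking $\bar\delta(x)=S_s$ as in \eqref{eq:S_u} (rejecting when $W<G^{-1}(s)$, randomizing on the boundary), giving $\Expect[W S_s]=s\,a(s)$; and simultaneously this same rule maximizes $\Expect[(1-W)\bar\delta(X)]$, hence minimizes $\Expect[(1-W)(1-\bar\delta(X))]=\Expect[1-W]-\Expect[(1-W)\bar\delta(X)]$, so it is simultaneously optimal for the objective — this is a standard Neyman--Pearson/bathtub argument. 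Therefore the scalar problem reduces to choosing the rejection level $s\in[0,1]$: we have $\mFDR=a(s)$ and $\mFNR=b(s)$ with $a,b$ as in \eqref{eq:a(u)}--\eqref{eq:b(u)}, and the problem becomes exactly \eqref{eq:opt_prob_a&b}. By \prettyref{lmm:properties_of_a&b}, $a$ is increasing and $b$ decreasing, so the constraint $a(s)\le\alpha$ is an interval $s\le u^*(\alpha)$ and the decreasing $b$ is minimized at the right endpoint $s=u^*(\alpha)$, which by \prettyref{lmm:properties_of_b} achieves $b^*(\alpha)$. Unwinding, $S_{u^*(\alpha)}$ applied coordinatewise is precisely $\maroracle(\alpha)$ of \eqref{eq:optimal_sol_of_mFDR&mFNR}, so $\optmFNR(\alpha)=b^*(\alpha)$ and the infimum is attained.

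I expect the main obstacle to be the reduction step rather than the bathtub optimization: one must carefully justify that passing from $\delta^n$ to the averaged scalar rule $\bar\delta$ does not decrease feasibility or increase the objective. Because $\mFDR$ and $\mFNR$ are ratios of expectations (not expectations of ratios), the numerators and denominators are each linear in the collection $\{P_{\delta_i\mid X_i}\}$, so $\mFDR(\delta^n)$ and $\mFNR(\delta^n)$ computed from $\delta^n$ coincide with those computed from the symmetrized rule whose every coordinate marginal is $\bar\delta$ — this requires checking that the cross terms (dependence of $\delta_i$ on $X_j$, $j\ne i$) drop out, which holds because $(X_j,\theta_j)_{j\ne i}$ are independent of $(X_i,\theta_i)$ and get integrated out, leaving only $\Expect[\,\cdot\mid X_i]$. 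A secondary technical point is the boundary-randomization bookkeeping: when $W$ has an atom at $G^{-1}(u^*(\alpha))$ one needs $p(u^*(\alpha))$ from \eqref{eq:p(u)} to make $S_{u^*(\alpha)}\sim\Bern(u^*(\alpha))$ and to make $a(\cdot)$ continuous from the left at that point, which is exactly what \prettyref{lmm:properties_of_a&b} and the construction \eqref{eq:S_u} were set up to handle. The edge cases $\alpha=0$ (reject nothing, $\mFNR=\pi_1$) and $\alpha\ge\pi_0$ (reject everything, $\mFNR=0$) follow from the corresponding statements in \prettyref{lmm:properties_of_b}, and I would dispatch them separately at the start. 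Since the excerpt defers the full argument to \prettyref{app:OptRulemFDR}, I would present the above as the proof outline there, filling in the linear-fractional manipulation and the bathtub inequality in detail.
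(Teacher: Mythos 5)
Your proof is correct, and it reaches the paper's endpoint (reduction to \eqref{eq:opt_prob_a&b} and optimality of the coordinatewise rule $S_{u^*(\alpha)}$) by a genuinely different decomposition. The paper works coordinate by coordinate: each $\delta_i$, viewed as a test of $\theta_i$ based on $X^n$ (the extra coordinates $X_j$, $j\neq i$, being ancillary and hence acting as pure randomization), has an operating point $(\Pi^i_{0|0},\Pi^i_{0|1})$ in the convex risk region $\calR(f_0,f_1)$; it then averages these $n$ operating points, applies Jensen's inequality to the convex lower boundary $\beta^*$ of that region, and exhibits a dominating rule made of $n$ i.i.d.\ Neyman--Pearson tests. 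You instead average first, replacing $\delta^n$ by the single scalar randomized test $\bar\delta(x)=\frac{1}{n}\sum_i\Prob[\delta_i=1\mid X_i=x]$ --- which leaves both ratios of expectations unchanged, the cross-dependence on $X_j$, $j\neq i$, dropping out via $\Expect[\Prob(\delta_i=1\mid X^n)\,W_i]=\Expect[\Prob(\delta_i=1\mid X_i)\,W_i]$ --- and then perform a single bathtub/NP optimization on the scalar problem, using the identity $\Expect[(1-W)\bar\delta]=\Expect[\bar\delta]-\Expect[W\bar\delta]$ to see that for fixed rejection mass $s$ the rule $S_s$ from \eqref{eq:S_u} simultaneously optimizes the constraint numerator and the objective numerator. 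Your route buys a shorter argument that dispenses entirely with the region $\calR(f_0,f_1)$, its convexity, and the function $\beta^*$; the paper's route makes the per-coordinate NP structure and the identification $c=G^{-1}(u)$, $p=p(u)$ explicit, which you absorb into the construction \eqref{eq:S_u}. The remaining bookkeeping (boundary randomization via $p(u)$ in \eqref{eq:p(u)}, the endpoint at $u^*(\alpha)$ via \prettyref{lmm:properties_of_a&b} and \prettyref{lmm:properties_of_b}, and the corner cases $\alpha=0$ and $\alpha\geq\pi_0$) is handled identically in both arguments.
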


\section{Proofs of main results}\label{sec:proofs_of_main_thms}
\subsection{Proof of \prettyref{thm:ora_FNR_vs_ora_mFNR}}\label{sec:ProofThm1}

As a first step toward proving \prettyref{thm:ora_FNR_vs_ora_mFNR}, we show that there is no loss of generality to restrict to procedures that sort the Lfdr values and reject the smallest $K$ of them, where $K$ is data-driven and possibly randomized.
Note that such procedures are \emph{not} separable.
\begin{lemma}\label{lmm:opt_prob_FDR&FNR}
Let $W_{(1)}\leq W_{(2)}\leq\cdots\leq W_{(n)}$ be the order statistics of $W_1,\cdots,W_n$. Then the optimization problem \eqref{eq:opt_prob_FDR&FNR} is equivalent to
\begin{align}
\begin{split}
    \optFNR_n(\alpha) =& \inf \,\,\,\Expect\left[\frac{1}{n-K}\sum_{i= K+1}^n(1-W_{(i)})\right]\\
    & \,\text{\rm s.t.}\,\,\,\, \Expect\left[\frac{1}{K}\sum_{i=1}^K W_{(i)}\right] \leq \alpha
\end{split}
\label{eq:opt_prob_FDR&FNR'}
\end{align}
where the infimum is over all Markov kernels $P_{K|X^n}$ from $\calX^n$ to $\{0,1\cdots,n\}$, and we adopt the convention that $\frac{1}{K}\sum_{i=1}^K W_{(i)}=0$ if $K=0$ and $\frac{1}{n-K}\sum_{i= K+1}^n(1-W_{(i)})=0$ if $K=n$. 

Furthermore, it suffices to restrict to binary-valued kernels. Namely, for any $x^n\in\calX^n$, conditioning on $X^n=x^n$, $K$ takes at most two values.
\end{lemma}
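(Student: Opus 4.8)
The plan is to derive \eqref{eq:opt_prob_FDR&FNR'} by two successive reductions and then obtain the two-value property from a pointwise convexity argument.

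\textbf{Step 1 (posterior averaging).} Because $\delta^n$ is a randomized function of $X^n$ alone, conditionally on $X^n$ the decision vector $\delta^n$ is independent of $\theta^n$, and $\theta_i\mid X^n\sim\Bern(1-W_i)$ independently across $i$, since $\Prob[\theta_i=0\mid X_i]=\Lfdr(X_i)=W_i$. Conditioning on $(X^n,\delta^n)$ and using the tower property gives
\[
\FDR(\delta^n)=\Expect\qth{\frac{\sum_{i}\delta_iW_i}{1\vee\sum_i\delta_i}},\qquad
\FNR(\delta^n)=\Expect\qth{\frac{\sum_{i}(1-\delta_i)(1-W_i)}{1\vee\sum_i(1-\delta_i)}},
\]
so the objective and the constraint in \eqref{eq:opt_prob_FDR&FNR} depend on $\delta^n$ only through these functionals of $(W_1,\dots,W_n,\delta^n)$.

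\textbf{Step 2 (sorting).} Fix $x^n$ and a realization of $\delta^n$ with $k=\sum_i\delta_i$ rejections. Over all rejection sets $R$ with $|R|=k$, the sum $\sum_{i\in R}W_i$ is minimized by taking $R$ to consist of the indices of the $k$ smallest $W_i$; since $\sum_{i\notin R}(1-W_i)=(n-k)-\sum_{i}W_i+\sum_{i\in R}W_i$, the same $R$ also minimizes $\sum_{i\notin R}(1-W_i)$. Hence, replacing an arbitrary kernel $P_{\delta^n\mid X^n}$ by the one that first draws $K=\sum_i\delta_i$ and then rejects the $K$ smallest $W_i$ (ties broken by index) leaves the conditional law of $K$ unchanged, and — comparing the integrands in the Step~1 formulas realization by realization — can only decrease both $\FDR$ and $\FNR$; feasibility is therefore preserved and the objective does not increase. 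Conversely, each \emph{reject-smallest-$K$} rule is a legitimate $\delta^n$ and achieves $\FDR=\Expect\qth{\frac{1}{K\vee1}\sum_{i=1}^{K}W_{(i)}}$ and $\FNR=\Expect\qth{\frac{1}{(n-K)\vee1}\sum_{i=K+1}^{n}(1-W_{(i)})}$. Matching the $1\vee$ terms with the stated empty-average conventions, this gives the equivalence of \eqref{eq:opt_prob_FDR&FNR} and \eqref{eq:opt_prob_FDR&FNR'}, with the infimum over Markov kernels $P_{K\mid X^n}$ from $\calX^n$ to $\{0,1,\dots,n\}$.

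\textbf{Step 3 (two values suffice).} For fixed $x^n$, put $\phi_k=\frac{1}{k\vee1}\sum_{i=1}^{k}W_{(i)}$ and $\psi_k=\frac{1}{(n-k)\vee1}\sum_{i=k+1}^{n}(1-W_{(i)})$, the FDR- and FNR-costs of rejecting the smallest $k$ of $W_1,\dots,W_n$. A kernel $q=P_{K\mid X^n=x^n}$ contributes the point $\pth{\sum_k q_k\phi_k,\ \sum_k q_k\psi_k}$, which ranges exactly over the convex polygon $C=\co\{(\phi_k,\psi_k):k=0,\dots,n\}\subset\reals^2$. Writing $t=\sum_k q_k\phi_k$ for the value of the constraint functional at $x^n$, the point of $C$ on the line $\{\phi=t\}$ with the smallest ordinate is a boundary point of $C$, hence a convex combination of at most two of the vertices of $C$, which are among the $(\phi_k,\psi_k)$. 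Replacing $q$ by such a two-atom law — chosen measurably in $x^n$, which is routine since the order statistics $W_{(i)}$, and hence $C$, are measurable functions of $x^n$ — keeps $\Expect[\sum_k q_k\phi_k]$ fixed and does not increase $\Expect[\sum_k q_k\psi_k]$. Thus one may restrict to kernels for which $K$ takes at most two values given $X^n=x^n$ without changing the infimum.

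The geometry behind Steps~2 and~3 is elementary: sorting jointly optimizes both functionals once the number of rejections is fixed, and a boundary point of a planar polygon is a convex combination of two of its vertices. The only delicate points are the bookkeeping of the $1\vee$ versus empty-average conventions in Step~2 and the measurable selection of the two-atom kernel in Step~3, both of which are standard.
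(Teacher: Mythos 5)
Your proof is correct and follows essentially the same route as the paper: posterior averaging to express FDR/FNR through the $W_i$, sorting to reduce to reject-the-smallest-$K$ rules, and a planar convexity argument for the two-value property. The only difference is in the last step: the paper invokes Carath\'eodory to get a three-atom kernel and then uses the monotonicity of $k\mapsto(\phi_k,\psi_k)$ to collapse it to two atoms, whereas you observe directly that the minimal-ordinate point of $C$ at fixed abscissa is a boundary point of a polygon and hence a convex combination of two vertices; both are valid, and yours is marginally cleaner since it avoids the monotonicity bookkeeping.
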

\begin{proof}
Suppose $\delta^n= (\delta_1,\cdots,\delta_n)$ is a feasible solution of \eqref{eq:opt_prob_FDR&FNR} and consider $K\triangleq \sum_{i=1}^n\delta_i$, a random variable supported on $\{0,1,\cdots,n\}$. Note that $\delta^n \indep \theta^n \,\,|\, X^n$ and $\Expect[\theta_i|X^n]=\Expect[\theta_i|X_i]=\frac{\pi_1f_1(X_i)}{f(X_i)}=1-W_i$, $\Expect[1-\theta_i|X^n]=\Expect[1-\theta_i|X_i]=\frac{\pi_0f_0(X_i)}{f(X_i)}=W_i$. We have 
\begin{align*}
    &\Expect\left[\frac{\sum_{i=1}^n (1-\delta_i)\theta_i}{1\vee\sum_{i=1}^n (1-\delta_i)}\Big|X^n\right] = \Expect\left[\frac{\sum_{i=1}^n (1-\delta_i)(1-W_i)}{1\vee\sum_{i=1}^n (1-\delta_i)}\Big|X^n\right]\geq \Expect\left[\frac{1}{n-K}\sum_{i=K+1}^n (1-W_{(i)})\Big|X^n\right],\\
    &\Expect\left[\frac{\sum_{i=1}^n \delta_i(1-\theta_i)}{1\vee\sum_{i=1}^n \delta_i}\Big|X^n\right] = \Expect\left[\frac{\sum_{i=1}^n \delta_i W_i}{1\vee\sum_{i=1}^n \delta_i}\Big|X^n\right]\geq \Expect\left[\frac{1}{K}\sum_{i=1}^K W_{(i)}\Big|X^n\right].
\end{align*}
where the expectations on the right side are with respect to the possible randomization of the decision rules. Hence, 
\begin{align}
    \Expect\left[\frac{1}{K}\sum_{i=1}^K W_{(i)}\right]\leq \FDR(\delta^n)\leq\alpha\text{\,\,\, and\,\,\,}\Expect\left[\frac{1}{n-K}\sum_{i=K+1}^n (1-W_{(i)})\right]\leq \FNR(\delta^n).\label{eq:lb_for _FDR_FNR}
\end{align}

On the other hand, suppose $K$ is a feasible solution of \eqref{eq:opt_prob_FDR&FNR'} and consider the decision rule $\delta^n$ such that $\{i\in [n]\triangleq\{1,\cdots,n\}:\,\delta_i=1\}$ consists of the indices corresponding to $W_{(1)},\cdots,W_{(K)}$. We have
\begin{align*}
    \FDR(\delta^n) &= \Expect\left[\Expect\left[\frac{\sum_{i=1}^n \delta_i(1-\theta_i)}{1\vee\sum_{i=1}^n \delta_i}\Big|X^n\right]\right] =  \Expect\left[\Expect\left[\frac{\sum_{i=1}^n \delta_i W_i}{1\vee\sum_{i=1}^n \delta_i}\Big|X^n\right]\right]\\
    &= \Expect\left[\Expect\left[\frac{1}{K}\sum_{i=1}^K W_{(i)}\Big|X^n\right]\right]=\Expect\left[\frac{1}{K}\sum_{i=1}^K W_{(i)}\right]\leq\alpha,\text{ and}
\end{align*}
\begin{align*}
    \FNR(\delta^n) &= \Expect\left[\Expect\left[\frac{\sum_{i=1}^n (1-\delta_i)\theta_i}{1\vee\sum_{i=1}^n (1-\delta_i)}\Big|X^n\right]\right] =  \Expect\left[\Expect\left[\frac{\sum_{i=1}^n (1-\delta_i)(1-W_i)}{1\vee\sum_{i=1}^n (1-\delta_i)}\Big|X^n\right]\right]\\
    &= \Expect\left[\Expect\left[\frac{1}{n-K}\sum_{i=K+1}^n (1-W_{(i)})\Big|X^n\right]\right]=\Expect\left[\frac{1}{n-K}\sum_{i=K+1}^n (1-W_{(i)})\right].
\end{align*}
Therefore, the optimization problem \eqref{eq:opt_prob_FDR&FNR} is equivalent to \eqref{eq:opt_prob_FDR&FNR'}. 

Next, we show that it suffices to restrict to binary-valued kernels. For any fixed $X^n=x^n= (x_1,\cdots,x_n)$, let $w_i\triangleq \pi_0f_0(x_i)/f(x_i)$, $i=1,\cdots,n$ and $w_{(1)}\leq\cdots\leq w_{(n)}$ be their order statistics. Define $h(k)=h(k;x^n)\triangleq \frac{1}{n-k}\sum_{i=k+1}^n(1-w_{(i)})$ and $g(k)=g(k;x^n)\triangleq \frac{1}{k}\sum_{i=1}^k w_{(i)}$, $k=0,1,\cdots,n$. For any feasible $K$ of \eqref{eq:opt_prob_FDR&FNR'}, let $K(x^n)$ denote $K$ conditioned on $X^n=x^n$. Then $(\Expect[g(K(x^n))],\Expect[h(K(x^n))])$ is in the convex hull of the subset $\{(g(k),h(k)):\,k=0,1\cdots,n\}\subset\reals^2$. By the Carath\'eodory theorem \cite[Chapter 2, Theorem 18]{eggleston1958convexity}, there exist $k_1, k_2, k_3\in\{0,1,\cdots,n\}$, $k_1\leq k_2\leq k_3$, and $p_1,p_2,p_3\in[0,1]$, $p_1+p_2+p_3=1$ such that $\Expect[g(K(x^n))] = p_1g(k_1)+p_2g(k_2)+p_3g(k_3)$ and $\Expect[h(K(x^n))] = p_1h(k_1)+p_2h(k_2)+p_3h(k_3)$. To simplify notation, let $g_i=g(k_i)$ and $h_i=h(k_i)$, $i=1,2,3$. Since $g(k)$ is increasing and $h(k)$ is decreasing in $k$, we have $g_1\leq g_2\leq g_3$ and $h_1\geq h_2\geq h_3$. Then there exists $\lambda\in[0,1]$ such that $g_2=\lambda g_1+(1-\lambda)g_3$, and thus $\Expect[g(K(x^n))]=(p_1+\lambda p_2)g_1+(p_3+(1-\lambda)p_2)g_3$. If $h_2\geq\lambda h_1+(1-\lambda)h_3$, then $(p_1+\lambda p_2)h_1+(p_3+(1-\lambda)p_2)h_3= p_1h_1 + p_3h_3 + p_2(\lambda h_1 + (1-\lambda) h_3)\leq p_1h_1+p_2h_2+p_3h_3=\Expect[h(K(x^n))]$. Consider $\tilde{K}(x^n)$ such that $\Prob(\tilde{K}(x^n)=k_1) = p$ and $\Prob(\tilde{K}(x^n)=k_3) = 1-p$, where $p=p_1+\lambda p_2$. Then $\Expect[g(\tilde{K}(x^n))]=\Expect[g(K(x^n))]$ and $\Expect[h(\tilde{K}(x^n))]\leq\Expect[h(K(x^n))]$. Similar arguments also apply when $h_2<\lambda h_1+(1-\lambda)h_3$. Consider $\tilde{K}=\tilde{K}(X^n)$ and further take expectations over $X^n$. We have $\Expect\left[\frac{1}{\tilde{K}}\sum_{i=1}^{\tilde{K}} W_{(i)}\right]=\Expect[g(\tilde{K}(X^n);X^n)]=\Expect[g(K(X^n);X^n)]=\Expect\left[\frac{1}{K}\sum_{i=1}^{K} W_{(i)}\right]$ and $\Expect\left[\frac{1}{n-\tilde{K}}\sum_{i>\tilde{K}} (1-W_{(i)})\right]=\Expect[h(\tilde{K}(X^n);X^n)]\leq\Expect[h(K(X^n);X^n)]=\Expect\left[\frac{1}{n-K}\sum_{i>K} (1-W_{(i)})\right]$, \ie $K$ is dominated by $\tilde{K}$ which is determined by a binary-valued kernel.
\end{proof}

Next we proceed to discuss the optimal solution to \eqref{eq:opt_prob_FDR&FNR'} in the large-$n$ limit. Recall the definitions of $S_u$, $a(\cdot)$ and $b(\cdot)$ in \eqref{eq:S_u}--\eqref{eq:b(u)}. We have the following lemma.

\begin{lemma}\label{lmm:approx_of_obj&constr}
For any $\tau\in (0,1)$ and $n \geq 1/\tau$,
\begin{enumerate}
    \item[(a)] for any $\epsilon>0$
    \begin{align}
    \Prob\pth{\sup_{\tau n\leq k\leq n}\Big|\frac{1}{k}\sum_{i=1}^k W_{(i)} - a(k/n)\Big|>\epsilon} \leq  6 \exp(-n\tau^2\epsilon^2/2);\label{ineq:ApproxConstr}
    \end{align}
    \item[(b)] for any $\epsilon>0$
    \begin{align}
    \Prob\pth{\sup_{0\leq k\leq(1-\tau)n}\Big|\frac{1}{n-k}\sum_{i=k+1}^n (1-W_{(i)}) - b(k/n)\Big|>\epsilon} \leq  6 \exp(-n\tau^2\epsilon^2/2).\label{ineq:ApproxObj}
    \end{align}
\end{enumerate}
\end{lemma}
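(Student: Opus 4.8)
The plan is to reduce both deviations to a single uniform quantity, the Kolmogorov distance $\sup_t|F_n(t)-G(t)|$ between the empirical CDF $F_n$ of $W_1,\dots,W_n$ and $G$, via a variational (Lorenz‑curve type) representation of the objects involved. The key observation is that, writing $u=k/n$, both the truncated partial sum of order statistics and its population counterpart $u\,a(u)=\Expect[W S_u]$ are values of a maximization over a single scalar parameter:
\begin{align*}
\frac1n\sum_{i=1}^{k} W_{(i)} = \max_{c\in[0,1]} \biggl\{ \frac1n\sum_{i=1}^n \min(W_i,c) - (1-u)c\biggr\}, \qquad
u\,a(u) = \max_{c\in[0,1]}\biggl\{\Expect[\min(W,c)] - (1-u)c\biggr\}.
\end{align*}
Each objective is concave in $c$ (a sum of the concave maps $c\mapsto\min(W_i,c)$, plus a linear term), and its right derivative at $c$ equals $\frac1n|\{i:W_i>c\}|-(1-u)$ in the empirical case and $\Prob(W>c)-(1-u)=u-G(c)$ in the population case; these change sign exactly at $c=W_{(k)}$ and at $c=G^{-1}(u)$ respectively, so the maxima are attained there. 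Evaluating at $c=W_{(k)}$ recovers $\frac1n\sum_{i\le k}W_{(i)}$ (a short computation that works even with ties), and evaluating at $c=G^{-1}(u)$ recovers $u\,a(u)$ once one substitutes $p(u)\Prob(W=G^{-1}(u))=u-\bar G(G^{-1}(u))$ from \eqref{eq:p(u)}. Note it is crucial here that $W,1-W\in[0,1]$, so restricting $c$ to $[0,1]$ loses nothing.

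Given these two representations, I would use $|\max f-\max g|\le\sup|f-g|$ to get
\[
\Bigl|\tfrac1n\textstyle\sum_{i\le k}W_{(i)} - u\,a(u)\Bigr| \le \sup_{c\in[0,1]}\Bigl|\tfrac1n\textstyle\sum_i\min(W_i,c)-\Expect[\min(W,c)]\Bigr|.
\]
By the layer-cake identity $\min(w,c)=\int_0^c\mathbf 1\{w>t\}\,\diff t$, the right‑hand side equals $\sup_{c\in[0,1]}\bigl|\int_0^c(G(t)-F_n(t))\,\diff t\bigr|\le\sup_t|F_n(t)-G(t)|$ (using $c\le1$). Dividing by $u=k/n\ge\tau$ then gives $\bigl|\frac1k\sum_{i\le k}W_{(i)}-a(k/n)\bigr|\le\tau^{-1}\sup_t|F_n(t)-G(t)|$, and part (a) follows from the Dvoretzky–Kiefer–Wolfowitz inequality $\Prob(\sup_t|F_n(t)-G(t)|>s)\le 2e^{-2ns^2}$ applied with $s=\tau\epsilon$, since $2e^{-2n\tau^2\epsilon^2}\le 6e^{-n\tau^2\epsilon^2/2}$. (If one wishes to avoid invoking DKW, a union bound over an $O(1/(\tau\epsilon))$‑point grid of $c$-values, together with Hoeffding's inequality for each fixed $c$, yields a bound of the same form and is presumably where the constant $6$ comes from.)

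For part (b), I would apply part (a) to $V_i\triangleq 1-W_i\in[0,1]$. Since the $i$-th largest among $W_1,\dots,W_n$ is $1$ minus the $i$-th smallest among the $V_i$, we have $\frac1{n-k}\sum_{i=k+1}^n(1-W_{(i)})=\frac1{n-k}\sum_{j=1}^{n-k}V_{(j)}$, and the hypothesis $0\le k/n\le 1-\tau$ becomes $\tau\le(n-k)/n\le1$, so part (a) applies with $k$ replaced by $n-k$ and $W_i$ by $V_i$. It then only remains to check, directly from \eqref{eq:S_u}–\eqref{eq:b(u)}, that the "$a$-function" of the distribution of $V$, evaluated at $(n-k)/n$, equals $b(k/n)$; this amounts to the identity $\Expect[V\mid S^{V}_{1-u}=1]=\Expect[1-W\mid S_u=0]$, which follows because the selector $S^{V}_{1-u}$ (the smallest $(1-u)$-fraction of $V$) coincides with $1-S_u$ (the largest $(1-u)$-fraction of $W$), the boundary randomization matching via $p_V(1-u)=1-p(u)$ by \eqref{eq:p(u)}.

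The main obstacle is getting the variational identities exactly right in the presence of atoms of $W$ (ties among the $W_i$): the quantities $S_u$, $p(u)$ and the generalized inverse $G^{-1}$ were introduced precisely to interpolate within an atom, and one must verify that the first‑order argument locates the population maximizer at $c=G^{-1}(u)$ and yields $u\,a(u)$ exactly — neither $\Expect[W\mathbf1\{W<G^{-1}(u)\}]$ nor $\Expect[W\mathbf1\{W\le G^{-1}(u)\}]$ — which is where \eqref{eq:p(u)} enters, and similarly that $\hat h$ is maximized on the flat piece containing $W_{(k)}$ with the claimed value even when $W_{(k)}$ is an atom. Once those representations are in place, the remainder is the routine DKW/Hoeffding estimate and the bookkeeping for the $V=1-W$ symmetry.
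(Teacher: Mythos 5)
Your proof is correct, and although it runs on the same probabilistic engine as the paper's --- the DKW inequality applied to the integrated CDF difference $\int_0^c (G(t)-F_n(t))\,\diff t$; indeed, since $(w-t)\indc{w<t}=\min(w,t)-t$, the paper's Step~1 is precisely your uniform bound on $\sup_{c}\big|\frac1n\sum_i \min(W_i,c)-\Expect[\min(W,c)]\big|$ --- the deterministic reduction is genuinely different. The paper evaluates the uniform bound at the random point $t=W_{(k)}$ and then needs a second estimate comparing $\phi(W_{(k)})$ with $\phi(G^{-1}(k/n))$, controlled by $\max\pth{k/n-G(W_{(k)}),\,\bar{G}(W_{(k)})-k/n}$ via another application of DKW; this two-stage triangulation is where the constant $6$ and the weakened exponent $n\tau^2\epsilon^2/2$ come from. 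Your Lorenz-curve representation of both $\frac1n\sum_{i\le k}W_{(i)}$ and $u\,a(u)$ as maxima of the same concave functional collapses the comparison into one use of $|\max f-\max g|\le\sup|f-g|$ and a single DKW application, and it delivers the strictly stronger bound $2\exp(-2n\tau^2\epsilon^2)$. The identities you flag as delicate do check out at atoms: the empirical objective is maximized at $c=W_{(k)}$ with value $\frac1n\sum_{i\le k}W_{(i)}$ regardless of ties, and the population value at $c=G^{-1}(u)$ equals $\Expect[W\indc{W\le G^{-1}(u)}]+G^{-1}(u)\pth{u-G(G^{-1}(u))}=\Expect[W\indc{W< G^{-1}(u)}]+G^{-1}(u)\pth{u-\bar{G}(G^{-1}(u))}=u\,a(u)$ by \eqref{eq:p(u)}. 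The reduction of (b) to (a) via $V=1-W$ is also sound and matches the paper's ``by symmetry'' treatment; the only caveat is that $G_V^{-1}(1-u)$ can differ from $1-G^{-1}(u)$ when $G$ is flat at level $u$, but the two selectors then differ only on a $\Prob$-null event, so $a_V((n-k)/n)=b(k/n)$ still holds.
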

The proof is given in \prettyref{app:ProofMainLmm}. The following corollary follows directly.
\begin{corollary}\label{cor:UnionBound}
For any $\tau\in(0,1)$ and for any (sequence of) Markov kernels $P_{K_n|X^n}$ from $\calX^n$ to $\{0,1,\cdots,n\}$, 
\begin{align*}
    &\pth{\frac{1}{K_n}\sum_{i=1}^{K_n} W_{(i)} - a(K_n/n)}\cdot\indc{K_n\geq \tau n} \toprob  0\text{, and}\\
    &\pth{\frac{1}{n-K_n}\sum_{i=K_n+1}^n (1-W_{(i)}) - b(K_n/n)}\cdot\indc{K_n\leq (1-\tau)n} \toprob  0\text{ as }n\to\infty.
\end{align*}

\end{corollary}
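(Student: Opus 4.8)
The plan is to deduce both convergences from the pointwise concentration estimates of \prettyref{lmm:approx_of_obj&constr} by a union bound over the admissible values of the index. The one subtlety is that $K_n$ is random --- it may depend on $X^n$ and on external randomization --- so I cannot substitute $k=K_n$ directly into \prettyref{lmm:approx_of_obj&constr}; instead I will take a union over all \emph{deterministic} indices $k$ in the relevant range, which is legitimate because the events appearing in \prettyref{lmm:approx_of_obj&constr} are measurable with respect to $X^n$ alone.

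For the first claim I would fix $\epsilon>0$ and $n\ge 1/\tau$, and for each integer $k$ with $1\le k\le n$ set $A_k\triangleq\bigl\{\,\bigl|\frac1k\sum_{i=1}^k W_{(i)}-a(k/n)\bigr|>\epsilon\,\bigr\}$, an event depending only on $X^n$. Since $\epsilon>0$, the event whose probability we must control equals $\{\,|\frac1{K_n}\sum_{i=1}^{K_n}W_{(i)}-a(K_n/n)|>\epsilon,\ K_n\ge\tau n\,\}$; partitioning the sample space by the value of $K_n$ and using that on $\{K_n=k\}$ one has $|\frac1{K_n}\sum_{i=1}^{K_n}W_{(i)}-a(K_n/n)|=|\frac1{k}\sum_{i=1}^{k}W_{(i)}-a(k/n)|$, this event satisfies
\[
\Bigl\{\,\Bigl|\tfrac1{K_n}\sum_{i=1}^{K_n}W_{(i)}-a(K_n/n)\Bigr|>\epsilon,\ K_n\ge\tau n\,\Bigr\}\ \subseteq\ \bigcup_{k=\lceil\tau n\rceil}^{n}A_k .
\]
Because $\lceil\tau n\rceil/n\ge\tau$, \prettyref{lmm:approx_of_obj&constr}(a) applies to each $A_k$ with $k$ in this range and gives $\Prob(A_k)\le 6\exp(-n\tau^2\epsilon^2/2)$; a union bound over the at most $n$ such indices then bounds the probability in question by $6n\exp(-n\tau^2\epsilon^2/2)$, which tends to $0$. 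As $\epsilon>0$ was arbitrary, this is the first asserted convergence in probability. The second assertion follows in exactly the same manner, using the events $B_k\triangleq\bigl\{\,\bigl|\frac1{n-k}\sum_{i=k+1}^n(1-W_{(i)})-b(k/n)\bigr|>\epsilon\,\bigr\}$ for $0\le k\le n-1$, the inclusion into $\bigcup_{k\le(1-\tau)n}B_k$ on the event $\{K_n\le(1-\tau)n\}$, and \prettyref{lmm:approx_of_obj&constr}(b) in place of part~(a).

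I do not anticipate a genuine obstacle. The only points to check are that the exponential rate $\exp(-\Theta(n))$ furnished by \prettyref{lmm:approx_of_obj&constr} dominates the linear-in-$n$ cost of the union bound --- which it does for every fixed $\tau,\epsilon>0$ --- and that the rounding in $\lceil\tau n\rceil$ (resp.\ $\lfloor(1-\tau)n\rfloor$) keeps $k/n$ within $[\tau,1]$ (resp.\ $[0,1-\tau]$), so that the lemma is applicable; both are immediate once $n\ge 1/\tau$.
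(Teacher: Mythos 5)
Your argument is correct and is exactly the union bound the paper has in mind (the paper simply states ``Applying a union bound over $k$ to \eqref{ineq:ApproxConstr} and \eqref{ineq:ApproxObj}, we obtain the following corollary''). You handle the one subtlety -- that $K_n$ is random and possibly depends on external randomization -- cleanly, by observing that the events $A_k$, $B_k$ depend only on $X^n$, so the inclusion into a union over deterministic indices is valid.
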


Now, let us relate \eqref{eq:opt_prob_FDR&FNR}, or equivalently \eqref{eq:opt_prob_FDR&FNR'}, to another optimization problem defined below:
\begin{align}
\begin{split}
    b^{**}(\alpha) \triangleq  & \inf \,\,\,\Expect[b(U)]\\
    & \,\text{s.t.}\,\,\,\, \Expect[a(U)] \leq \alpha
\end{split}
\label{eq:opt_prob_a&b_random}
\end{align}
where the infimum is over all random variables $U$ supported on $[0,1]$. We show in the next lemma that \prettyref{eq:opt_prob_a&b_random} is in fact the convexified version of \prettyref{eq:opt_prob_a&b}. (See \prettyref{app:two-value} for the proof.)

\begin{lemma}\label{lmm:property_of_b_doule_star}
The function $b^{**}$ is the GCM of $b^*$ defined in \prettyref{eq:opt_prob_a&b}. Consequently, $b^{**}$ is convex on $[0,1]$ and thus continuous on $(0,1)$. In particular, $b^{**}(0)=\pi_1$ and $b^{**}(\alpha)=0$ for any $\alpha\in[\pi_0,1]$.

Furthermore, the infimum in \eqref{eq:opt_prob_a&b_random} is achieved by a binary-valued $U^*=U^*(\alpha)$, given by
\begin{align}\label{eq:U*}
U^*  =
\begin{cases}
u^*(\alpha_1)\quad&\text{with probability }\frac{\alpha_2-\alpha}{\alpha_2-\alpha_1}\\
u^*(\alpha_2)\quad&\text{with probability }\frac{\alpha-\alpha_1}{\alpha_2-\alpha_1}
\end{cases}
\end{align}
where $0\leq \alpha_1\leq\alpha\leq \alpha_2\leq 1$ and $u^*(\cdot)$ is the optimal solution of \eqref{eq:opt_prob_a&b} given in \prettyref{lmm:properties_of_b}. (In case that $\alpha_1 = \alpha_2 = \alpha$, we set $U^*\equiv u^*(\alpha)$.)
\end{lemma}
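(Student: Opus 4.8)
The plan is to recognize \eqref{eq:opt_prob_a&b_random} as the ``expectation form'' of the greatest convex minorant of $b^*$, by transferring between a random variable $U$ and the pushforward $V=a(U)$, and then to read off the binary minimizer from the standard two-point description of the GCM. \emph{Step 1 (reduction to $b^*$):} I would first record the pointwise bound $b(u)\ge b^*(a(u))$ for every $u\in[0,1]$, which is immediate because $u$ is feasible for the problem \eqref{eq:opt_prob_a&b} defining $b^*(a(u))=\inf\{b(v):a(v)\le a(u)\}$, so that infimum is at most $b(u)$. Conversely, for any $\beta\in[0,1]$ the point $u^*(\beta)=\sup\{u:a(u)\le\beta\}$ satisfies $a(u^*(\beta))\le\beta$ and $b(u^*(\beta))=b^*(\beta)$ by \prettyref{lmm:properties_of_b}, and $\beta\mapsto u^*(\beta)$ is nondecreasing, hence Borel. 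Combining these: if $\Expect[a(U)]\le\alpha$ then $V:=a(U)$ has $\Expect[V]\le\alpha$ and $\Expect[b(U)]\ge\Expect[b^*(V)]$, while if $V\in[0,1]$ has $\Expect[V]\le\alpha$ then $U:=u^*(V)$ is feasible for \eqref{eq:opt_prob_a&b_random} with $\Expect[a(U)]\le\Expect[V]\le\alpha$ and $\Expect[b(U)]=\Expect[b^*(V)]$. Hence
\[
b^{**}(\alpha)=\inf\big\{\Expect[b^*(V)]:\ V\in[0,1],\ \Expect[V]\le\alpha\big\}=:\psi(\alpha),
\]
and a minimizer $V^*$ of $\psi(\alpha)$ yields a minimizer $U^*=u^*(V^*)$ of \eqref{eq:opt_prob_a&b_random}.

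\emph{Step 2 ($\psi=\mathrm{GCM}(b^*)$):} By \prettyref{lmm:properties_of_b}, $b^*$ is nonincreasing and right-continuous (hence lower semicontinuous) on $[0,1]$, with $0\le b^*\le\pi_1$, $b^*(0)=\pi_1$, $b^*=0$ on $[\pi_0,1]$. Taking $V\equiv\alpha$ gives $\psi\le b^*$, and mixing near-optimal $V$'s for two constraint levels shows $\psi$ is convex, so $\psi\le\mathrm{GCM}(b^*)$. For the reverse, apply Jensen to the convex function $\mathrm{GCM}(b^*)$ (which is nonincreasing, since a convex function on $[0,1]$ that increased somewhere would force $\mathrm{GCM}(b^*)(1)>0=b^*(1)$) and use $\Expect[V]\le\alpha$: $\Expect[b^*(V)]\ge\Expect[\mathrm{GCM}(b^*)(V)]\ge\mathrm{GCM}(b^*)(\Expect[V])\ge\mathrm{GCM}(b^*)(\alpha)$; take the infimum over $V$ to get $\psi\ge\mathrm{GCM}(b^*)$. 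Therefore $b^{**}=\psi=\mathrm{GCM}(b^*)$, which is convex on $[0,1]$ and hence continuous on $(0,1)$; also $b^{**}(0)=\psi(0)=b^*(0)=\pi_1$ (as $\Expect[V]\le0$ forces $V\equiv0$) and $b^{**}(\alpha)=0$ for $\alpha\in[\pi_0,1]$ (take $V\equiv\pi_0$, and $b^{**}\ge0$).

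\emph{Step 3 (binary minimizer):} By the standard structure of the greatest convex minorant of a lower semicontinuous function on a compact interval, for each $\alpha$ there exist $0\le\alpha_1\le\alpha\le\alpha_2\le1$ with $\mathrm{GCM}(b^*)(\alpha)=\frac{\alpha_2-\alpha}{\alpha_2-\alpha_1}b^*(\alpha_1)+\frac{\alpha-\alpha_1}{\alpha_2-\alpha_1}b^*(\alpha_2)$: either $\alpha_1=\alpha_2=\alpha$ when $\mathrm{GCM}(b^*)(\alpha)=b^*(\alpha)$, or $\alpha_1,\alpha_2$ are the endpoints of the maximal linear segment of $\mathrm{GCM}(b^*)$ containing $\alpha$ (both on the graph of $b^*$, by lower semicontinuity). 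Let $V^*$ be this two-point distribution and $U^*:=u^*(V^*)$, i.e.\ $U^*=u^*(\alpha_1)$ with probability $\frac{\alpha_2-\alpha}{\alpha_2-\alpha_1}$ and $U^*=u^*(\alpha_2)$ with probability $\frac{\alpha-\alpha_1}{\alpha_2-\alpha_1}$, which is exactly \eqref{eq:U*}. Then, using $a(u^*(\alpha_i))\le\alpha_i$ and $b(u^*(\alpha_i))=b^*(\alpha_i)$ from \prettyref{lmm:properties_of_b}, $\Expect[a(U^*)]\le\frac{\alpha_2-\alpha}{\alpha_2-\alpha_1}\alpha_1+\frac{\alpha-\alpha_1}{\alpha_2-\alpha_1}\alpha_2=\alpha$ and $\Expect[b(U^*)]=\mathrm{GCM}(b^*)(\alpha)=b^{**}(\alpha)$, so $U^*$ is feasible, optimal, and of the asserted form.

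The step I expect to be the main obstacle is Step 3: establishing, with only lower semicontinuity of $b^*$ available, that $\mathrm{GCM}(b^*)(\alpha)$ is a convex combination of precisely \emph{two} values of $b^*$ at points straddling $\alpha$, and that this value is actually \emph{attained} rather than merely approached, together with the careful treatment of the boundary cases $\alpha_1=0$ or $\alpha_2=1$ and of the case where $\alpha$ is itself a contact point $\mathrm{GCM}(b^*)(\alpha)=b^*(\alpha)$. Once the pointwise inequality $b(u)\ge b^*(a(u))$ and its quasi-inverse $\beta\mapsto u^*(\beta)$ are in place, Steps 1 and 2 are routine.
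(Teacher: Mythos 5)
Your proposal is correct and follows essentially the same route as the paper: the transfer $V=a(U)$, $U=u^*(V)$ reducing \eqref{eq:opt_prob_a&b_random} to minimizing $\Expect[b^*(V)]$ over $\Expect[V]\le\alpha$, the Jensen argument identifying this value with the GCM of $b^*$, and a two-point minimizer pushed back through $u^*$. The one step you defer to a ``standard'' fact --- that $\mathrm{GCM}(b^*)(\alpha)$ is actually \emph{attained} as a convex combination of exactly two values of $b^*$ at points straddling $\alpha$ --- is precisely where the paper invests its effort: it completes the graph of $b^*$ across its jumps to obtain a compact connected set $\bar R$, applies the Fenchel--Eggleston--Carath\'eodory theorem to $\mathrm{conv}(\bar R)$ to get a two-point representation, and then extracts an optimal two-point $Z^*$ from a minimizing sequence using the lower semicontinuity (monotonicity plus right-continuity) of $b^*$; that argument, or an equivalent one, must be supplied for your Step 3 to be complete.
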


The following proposition, which is the crux of the proof, relates the optimization problem \eqref{eq:opt_prob_FDR&FNR}, or equivalently \eqref{eq:opt_prob_FDR&FNR'}, to the optimization problem \eqref{eq:opt_prob_a&b_random} in the large-$n$ limit.

\begin{proposition}\label{prop:asymp_opt}
For any $\alpha\in[0,1]$, $\optFNR(\alpha)\triangleq \lim
\limits_{n\to\infty}\optFNR_n(\alpha) =  b^{**}(\alpha)$.
\end{proposition}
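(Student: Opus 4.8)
The plan is to establish matching upper and lower bounds on $\lim_n \optFNR_n(\alpha)$, both equal to $b^{**}(\alpha)$, using the reduced form \eqref{eq:opt_prob_FDR&FNR'} from \prettyref{lmm:opt_prob_FDR&FNR} together with the concentration estimates in \prettyref{cor:UnionBound}.

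For the \textbf{upper bound} $\limsup_n \optFNR_n(\alpha) \leq b^{**}(\alpha)$, I would exhibit a near-optimal feasible $K$ built from the binary-valued optimizer $U^*$ of \eqref{eq:opt_prob_a&b_random} given in \prettyref{lmm:property_of_b_doule_star}. Concretely, take $K_n$ to be (a randomized choice between) $\lceil n u^*(\alpha_1)\rceil$ and $\lceil n u^*(\alpha_2)\rceil$ with the probabilities in \eqref{eq:U*}; this $K_n$ is \emph{independent} of $X^n$, so $K_n/n \to U^*$. By \prettyref{cor:UnionBound}, $\frac1{K_n}\sum_{i=1}^{K_n} W_{(i)} - a(K_n/n)\to 0$ in probability (and the quantity is bounded in $[0,1]$, so also in $L^1$), hence the constraint value converges to $\Expect[a(U^*)]\leq\alpha$; likewise the objective converges to $\Expect[b(U^*)]=b^{**}(\alpha)$. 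A slight inflation of $\alpha_2$ (or a small tolerance argument using right-continuity of $b^*$ and $b^{**}$) handles the $o(1)$ slack in the constraint so that $K_n$ is exactly feasible for level $\alpha$ for all large $n$, giving $\optFNR_n(\alpha)\leq b^{**}(\alpha)+o(1)$.

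For the \textbf{lower bound} $\liminf_n \optFNR_n(\alpha)\geq b^{**}(\alpha)$, I would take any feasible sequence $K_n$ for \eqref{eq:opt_prob_FDR&FNR'}, set $U_n \triangleq K_n/n \in[0,1]$, and show that its law is asymptotically feasible and near-optimal for \eqref{eq:opt_prob_a&b_random}. The key step is to replace the empirical averages by $a(U_n)$ and $b(U_n)$. Here one must be careful about the boundary regions $K_n$ near $0$ or near $n$ where \prettyref{cor:UnionBound} does not apply: for $K_n \leq \tau n$ the empirical constraint average $\frac1{K_n}\sum_{i=1}^{K_n}W_{(i)}$ is nonnegative and $a(\cdot)\leq a(\tau)\to a(0+)=0$ as $\tau\to0$ (using continuity of $a$ on $(0,1]$ and the convention $a(0)=0$), so the contribution is negligible; symmetrically for $K_n \geq (1-\tau)n$ using $b(1-)=b(1)=0$ wait—$b$ is continuous on $[0,1)$ with $b(1)=0$, so $b(K_n/n)\le b(1-\tau)\to 0$. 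Thus $\Expect[a(U_n)]\le\alpha+o(1)$ and $\FNR$-value $=\Expect[b(U_n)]+o(1)$. Passing to a subsequence along which $U_n$ converges in distribution to some $U$ (Prokhorov, since $[0,1]$ is compact) and using boundedness plus the fact that $a$ and $b$ are bounded and continuous except possibly at the single point $0$ (resp.\ $1$), which has a controlled contribution, yields $\Expect[a(U)]\leq\alpha$ and $\liminf \Expect[b(U_n)] \geq \Expect[b(U)] \geq b^{**}(\alpha)$ by definition \eqref{eq:opt_prob_a&b_random}.

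The \textbf{main obstacle} I anticipate is the lower-bound direction, specifically the interchange of limit and expectation for $a(U_n)$ and $b(U_n)$ when $U_n$ can concentrate mass near the endpoints $0$ and $1$, where $a$ (resp.\ $b$) is only one-sidedly continuous and the approximation lemma is vacuous. The resolution is the two-sided truncation at level $\tau$ described above, sending $\tau\to 0$ after $n\to\infty$, combined with a lower-semicontinuity argument for the objective (using that $b$ is decreasing and right-continuous, one can handle the single possible discontinuity by approximating $b$ from below by a continuous function); alternatively, since $b$ is lower semicontinuous on $[0,1]$ after the convention $b(1)=0$, weak convergence directly gives $\liminf\Expect[b(U_n)]\ge\Expect[b(U)]$. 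A secondary technical point is ensuring exact (not just asymptotic) feasibility in the upper bound, handled by the slack argument noted above.
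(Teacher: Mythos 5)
Your upper‐bound argument is a genuinely different route from the paper's.  The paper achieves $\optFNR_n(\alpha)\leq b^{**}(\alpha)$ \emph{non-asymptotically} by constructing the rule \prettyref{eq:construction_of_delta} that thresholds every $W_i$ at a shared random cutoff $G^{-1}(U)$ and then applying \prettyref{lmm:eq_for_pFDR&pFNR} to the conditionally iid pairs $(\theta_i,\delta_i^u)$, giving $\FDR(\delta^n)\leq\Expect[a(U)]\leq\alpha$ exactly for every $n$.  You instead feed a deterministic (data-independent) $K_n\approx\lceil nU^*\rceil$ into \eqref{eq:opt_prob_FDR&FNR'} and invoke \prettyref{cor:UnionBound}, which only yields feasibility up to an $o(1)$ slack, so you have to repair it with a perturbation argument (and note that ``inflating $\alpha_2$'' is the wrong direction; you should shrink the target to $\alpha-\delta$ and let $\delta\downarrow 0$ using continuity of $b^{**}$).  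This is workable, but more fragile than the paper's, and it does not give the exact finite-$n$ FDR control that the paper reuses in \prettyref{thm:ora_FNR_vs_ora_mFNR}.

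Your lower‐bound argument has a genuine gap.  You set $U_n = K_n/n$ and claim $\Expect[a(U_n)]\leq\alpha+o(1)$ by asserting that $a(\tau)\to a(0+)=0$ as $\tau\to0$, and symmetrically that $b(1-\tau)\to b(1^-)=0$.  Neither limit is justified: \prettyref{lmm:properties_of_a&b} gives continuity of $a$ on $(0,1]$ and of $b$ on $[0,1)$, but says nothing about continuity at the endpoints $0$ and $1$ from the inside.  In fact $a(0+)=\operatorname{essinf}W$ and $1-b(1^-)=\operatorname{esssup}W$, so $a(0+)>0$ and $b(1^-)>0$ whenever the likelihood ratio $f_1/f_0$ is bounded (a very common situation).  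When $a(0+)>0$ the term $\Expect[a(K_n/n)\indc{K_n<\tau n}]$ can be of order $a(0+)\Prob(K_n<\tau n)$, which is not $o(1)$ and is \emph{not} dominated by the empirical constraint on that region (where concentration is unavailable); so $\Expect[a(U_n)]\leq\alpha+o(1)$ can simply fail, and the subsequent Prokhorov argument has nothing to start from.  The same defect appears on the $b$ side.  The paper avoids this by never evaluating $a$ or $b$ at $K_n/n$ on the boundary strips: it defines $U = \frac{K_n}{n}\indc{\tau n\leq K_n\leq(1-\tau)n}+\indc{K_n>(1-\tau)n}$ so that $a(U)=a(0)=0$ on $\{K_n<\tau n\}$ and $b(U)=b(1)=0$ on $\{K_n>(1-\tau)n\}$, and then uses only the \emph{inner} one-sided continuity — $a$ at $1^-$ (to get $a(1-\tau)\geq\pi_0-\epsilon$) and $b$ at $0^+$ (to get $b(\tau)\geq\pi_1-\epsilon$) — together with monotonicity.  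This yields a concrete feasible $U$ for \eqref{eq:opt_prob_a&b_random} at level $\alpha+2\epsilon$ with $\Expect[b(U)]\leq\optFNR_n(\alpha)+3\epsilon$, and continuity of the convex function $b^{**}$ finishes the proof without any weak-convergence or semicontinuity machinery.  You should replace your $U_n=K_n/n$ with this truncated variable and drop the unjustified claims $a(0+)=0$, $b(1^-)=0$.
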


We start with a simple observation paraphrased from  \cite[Theorem 1]{storey2003positive} (and proved in \prettyref{app:ProofpFDR} for completeness).
\begin{lemma}\label{lmm:eq_for_pFDR&pFNR}
Under the two-group model \eqref{eq:two-group_model}, for any decision rule $\delta^n = (\delta_1,\cdots,\delta_n)\in \{0,1\}^n$ such that $(\theta_1,\delta_1),\cdots,(\theta_n,\delta_n)$ are \iid pairs, we have $\FDR(\delta^n)\leq\Prob\pth{\theta_1 = 0\,|\,\delta_1=1}$ and $\FNR(\delta^n)\leq\Prob\pth{\theta_1 = 1\,|\,\delta_1=0}$.
\end{lemma}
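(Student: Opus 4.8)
The plan is to reduce the expectation of the ratio $\FDP(\delta^n)$ to an expectation involving only the number of rejections: condition on that number, use the \iid{} structure of the pairs to identify the conditional law of the numerator, and then invoke the trivial bound $\frac{k}{1\vee k}\le 1$ for integers $k\ge 0$. The bound on $\FNR$ will follow by applying the same argument to the complementary pairs $(1-\theta_i,1-\delta_i)$.

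Concretely, write $R\triangleq\sum_{i=1}^n\delta_i$ for the number of rejections and $V\triangleq\sum_{i=1}^n\delta_i(1-\theta_i)$ for the number of false rejections, so that $\FDP(\delta^n)=V/(1\vee R)$. If $\Prob(\delta_1=1)=0$, then $R=V=0$ almost surely and $\FDR(\delta^n)=0$, so the first inequality is trivial; hence we may assume $\Prob(\delta_1=1)>0$. I would first observe that, since the pairs $(\theta_i,\delta_i)$ are \iid, conditioning on the rejection set $\{i:\delta_i=1\}=S$ leaves $(\theta_i)_{i\in S}$ \iid{} $\Bern\pth{\Prob(\theta_1=1\,|\,\delta_1=1)}$; consequently $V$ given $\{R=r\}$ is distributed as $\Binom\pth{r,\Prob(\theta_1=0\,|\,\delta_1=1)}$, depending on the rejection set only through its size. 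Taking iterated expectations,
\[
\FDR(\delta^n)=\Expect\qth{\frac{\Expect[V\mid R]}{1\vee R}}=\Prob(\theta_1=0\,|\,\delta_1=1)\cdot\Expect\qth{\frac{R}{1\vee R}}\le\Prob(\theta_1=0\,|\,\delta_1=1),
\]
where the last step uses $\frac{R}{1\vee R}=\indc{R\ge 1}\le 1$. The bound on $\FNR(\delta^n)$ comes from the identical argument applied to $(1-\theta_i,1-\delta_i)$: with $\tilde R\triangleq n-R=\sum_{i=1}^n(1-\delta_i)$ and $\tilde V\triangleq\sum_{i=1}^n(1-\delta_i)\theta_i$, the \iid{} structure gives $\tilde V\mid\{\tilde R=r\}\sim\Binom\pth{r,\Prob(\theta_1=1\,|\,\delta_1=0)}$, and the same two lines yield $\FNR(\delta^n)=\Prob(\theta_1=1\,|\,\delta_1=0)\cdot\Expect[\tilde R/(1\vee\tilde R)]\le\Prob(\theta_1=1\,|\,\delta_1=0)$, the case $\Prob(\delta_1=0)=0$ being again trivial.

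I expect no real obstacle here; the only delicate point is the passage from cross-index independence of the pairs to the conditional binomial law of $V$ given $R$, which is exactly where the \iid{} hypothesis (as opposed to an arbitrary data-dependent rule) is used, and which is the standard conditional-probability computation behind Storey's identity: for $S\subseteq[n]$ and $(t_i)_{i\in S}\in\{0,1\}^{|S|}$,
\[
\Prob\pth{\{i:\delta_i=1\}=S,\ \theta_i=t_i\ \forall\,i\in S}=\prod_{i\in S}\Prob(\theta_1=t_i,\delta_1=1)\cdot\prod_{j\notin S}\Prob(\delta_1=0),
\]
and dividing by $\Prob\pth{\{i:\delta_i=1\}=S}=\Prob(\delta_1=1)^{|S|}\,\Prob(\delta_1=0)^{n-|S|}$ exhibits the conditional law as a product depending on $S$ only through $|S|$. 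This justifies that $\Expect[V\mid R]=R\cdot\Prob(\theta_1=0\,|\,\delta_1=1)$ (and likewise for $\tilde V$), completing the argument.
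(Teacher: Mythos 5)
Your proof is correct and follows essentially the same route as the paper's: both condition on the realized rejection set, use the \iid{} pair structure to show the conditional mean of the numerator is $R\cdot\Prob(\theta_1=0\,|\,\delta_1=1)$ (the paper phrases this as Storey's identity for the conditional FDP, you as a conditional binomial law), and then bound the resulting factor $\Prob(R\geq 1)$ by $1$. The FNR case is handled identically in both.
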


\begin{proof}[Proof of \prettyref{prop:asymp_opt}]
First, we show the positive direction, namely $\optFNR_n(\alpha) \leq  b^{**}(\alpha)$ for any $n\in\mathbb{Z}_+$ and $\alpha\in[0,1]$. For any feasible $U$ of \eqref{eq:opt_prob_a&b_random}, consider a randomized rule $\delta^n = (\delta_1,\delta_2,\cdots,\delta_n)$ based on the Lfdr values \prettyref{eq:Lfdri}:
\begin{align}\label{eq:construction_of_delta}
    \delta_i= 
    \begin{cases}
        1 & \text{if } W_i < G^{-1}(U)\\
        \Bern(p(U)) & \text{if } W_i = G^{-1}(U)\\
        0 & \text{if } W_i > G^{-1}(U)
    \end{cases}
\end{align}
where $p(\cdot)$ is defined in \eqref{eq:p(u)}. For fixed $u\in[0,1]$, let $\delta^{n,u}=(\delta^u_1,\cdots,\delta^u_n)$ denote $\delta^n$ conditioned on $U=u$. Then $(\theta_1,\delta^u_1),\cdots,(\theta_n,\delta^u_n)$ are \iid pairs. By \prettyref{lmm:eq_for_pFDR&pFNR},
\begin{align*}
    \FDR(\delta^{n,u}) &\leq \Prob\pth{\theta_1 = 0\,|\,\delta^u_1=1}= \frac{1}{u}\Expect[\indc{\theta_1=0,\delta^u_1=1}]= \frac{\pi_0}{u}\Expect_0[\indc{\delta^u_1=1}]\\
    &= \frac{1}{u}\Expect\left[\frac{\pi_0f_0(X_1)}{f(X_1)}\indc{\delta^u_1=1}\right]= \frac{1}{u}\Expect\left[W_1\indc{\delta^u_1=1}\right] = a(u)
\end{align*}
where $\Expect_0$ stands for taking expectation under $f_0$. Similarly, we have $\FNR(\delta^{n,u}) \leq b(u)$. Thus,
\[
\FDR(\delta^n) = \Expect[\FDR(\delta^{n,U})] \leq \Expect[a(U)] = \alpha\text{ and }
\FNR(\delta^n) = \Expect[\FNR(\delta^{n,U})] \leq \Expect[b(U)].
\]
It follows that $\optFNR_n(\alpha) \leq \FNR(\delta^n) \leq \Expect[b(U)]$. Optimizing over all feasible $U$, we have $\optFNR_n(\alpha) \leq  b^{**}(\alpha)$.
    
Next, we prove the converse, $\liminf\limits_{n\to\infty} \optFNR_n(\alpha) \geq  b^{**}(\alpha)$ for any $\alpha\in(0,1)$. For any $\epsilon>0$, by the continuity of $a(\cdot)$ and $b(\cdot)$ in \prettyref{lmm:properties_of_a&b}, there exists $\tau >0$ such that $\epsilon > |a(1)-a(1-\tau)| = |\pi_0 - a(1-\tau)|$ and $\epsilon > |b(0)-b(\tau)| = |\pi_1 - b(\tau)|$. Suppose $K_n$ is a feasible solution of \eqref{eq:opt_prob_FDR&FNR'} such that $\Expect\left[\frac{1}{n-K_n}\sum_{i=K_n+1}^{n} (1-W_{(i)})\right] \leq \optFNR_n(\alpha) + \epsilon$. By \prettyref{cor:UnionBound}, we have $\Expect\left[\pth{\frac{1}{K_n}\sum_{i=1}^{K_n} W_{(i)} - a(K_n/n)}\cdot\indc{K_n\geq \tau n}\right] \to 0$, as $n\to\infty$. Therefore, there exists $N$ such that for any $n > N$ 
\begin{align}
   \left| \Expect\left[\pth{\frac{1}{K_n}\sum_{i=1}^{K_n} W_{(i)} - a\pth{K_n/n}}\cdot\indc{K_n\geq \tau n}\right] \right|< \epsilon.		\label{eq:union_approx_a}
\end{align}
Then we have 
\begin{align*}
    \alpha &\geq \Expect\left[\frac{1}{K_n}\sum_{i=1}^{K_n} W_{(i)}\right] \geq \Expect\left[\frac{1}{K_n}\sum_{i=1}^{K_n} W_{(i)}\cdot\indc{K_n\geq \tau n}\right]\\
    &\overset{\eqref{eq:union_approx_a}}{\geq} \Expect\left[a\pth{K_n/n}\cdot\indc{K_n\geq \tau n}\right] - \epsilon\\ 
    &= \Expect\left[a\pth{K_n/n}\cdot\indc{\tau n \leq K_n\leq (1-\tau) n}\right]
    + \Expect\left[a\pth{K_n/n}\cdot\indc{K_n > (1-\tau) n}\right] - \epsilon\\ 
    &\geq \Expect\left[a\pth{K_n/n}\cdot\indc{\tau n \leq K_n\leq (1-\tau) n}\right]
    + a\pth{1-\tau}\Prob\pth{K_n > (1-\tau) n} - \epsilon\\ &\geq \Expect\left[a\pth{K_n/n}\cdot\indc{\tau n \leq K_n \leq (1-\tau) n}\right]
    + \pi_0\Prob\pth{K_n > (1-\tau) n} - 2\epsilon
\end{align*}
where the second last inequality is by noting that $a(y)$ is increasing in $y$ and the last inequality is because of the choice of $\tau$. Similarly, we have
\begin{align*}
    \Expect\left[\frac{1}{n-K_n}\sum_{i=K_n+1}^{n} (1-W_{(i)})\right]\geq \Expect\left[b\pth{K_n/n}\cdot\indc{\tau n \leq K_n\leq (1-\tau) n}\right] + \pi_1\Prob\pth{K_n< \tau n} - 2\epsilon.
\end{align*}
Choose $U = \frac{K_n}{n}\cdot\indc{\tau n \leq K_n\leq (1-\tau)n} + \indc{K_n > (1-\tau) n}$. Then $U\in[0,1]$ almost surely and 
\begin{align*}
    \Expect[a(U)] &= \Expect\left[a\pth{K_n/n}\cdot\indc{\tau n \leq K_n\leq (1-\tau) n}\right] + \pi_0\Prob\pth{K_n>(1-\tau)n} \leq \alpha + 2\epsilon\\
    \Expect[b(U)] &= \Expect\left[b\pth{K_n/n}\cdot\indc{\tau n \leq K_n\leq (1-\tau) n}\right] + \pi_1\Prob\pth{K_n< \tau n}\\
    &\leq \Expect\left[\frac{1}{n-K_n}\sum_{i=K_n+1}^{n} (1-W_{(i)})\right] + 2\epsilon\leq \optFNR_n(\alpha) + 3\epsilon.
\end{align*}
Hence, we have $ b^{**}(\alpha+2\epsilon)\leq \Expect[b(U)] \leq \optFNR_n(\alpha) + 3\epsilon$, for any $n > N$. Letting $n\to\infty$, we get $\liminf\limits_{n\to\infty}\optFNR_n(\alpha) \geq  b^{**}(\alpha+2\epsilon) - 3\epsilon$. By the continuity of $ b^{**}$ in \prettyref{lmm:property_of_b_doule_star}, letting $\epsilon\to 0^+$, we get $\liminf\limits_{n\to\infty}\optFNR_n(\alpha) \geq  b^{**}(\alpha)$ for any $\alpha\in(0,1)$. 
    
Finally for the corner cases of $\alpha=0$ and $\alpha=1$, we have $\optFNR_n(0) =  b^{**}(0) = \pi_1$ and $\optFNR_n(1) =  b^{**}(1) = 0$. This completes the proof.
\end{proof}

To finish proving \prettyref{thm:ora_FNR_vs_ora_mFNR}, note that \prettyref{thm:optimal_sol_of_mFDR&mFNR} and \prettyref{lmm:property_of_b_doule_star} imply that $\optFNR$ is the GCM of $\optmFNR$.
Furthermore, by the construction of $\delta^n$ in \eqref{eq:construction_of_delta}, the following procedure
\begin{align}\label{eq:asymp_opt_sol}
    \delta_i=
    \begin{cases}
        1 & \text{if } W_i < G^{-1}(U^*)\\
        \Bern(p(U^*)) & \text{if } W_i = G^{-1}(U^*)\\
        0 & \text{if } W_i > G^{-1}(U^*)
    \end{cases},\quad i = 1,\cdots,n
\end{align}
where $U^*$ is given by \prettyref{eq:U*}, achieves $\FDR(\oracle(\alpha))\leq\alpha$ and $\lim\limits_{n\to\infty}\FNR(\oracle(\alpha))=b^{**}(\alpha)=\optFNR(\alpha)$. Combining \prettyref{thm:optimal_sol_of_mFDR&mFNR} and \prettyref{eq:U*}, we conclude that the oracle $\oracle(\alpha)$ in \eqref{eq:asym_opt_sol_of_FDR&FNR} is asymptotically optimal, thus completing the proof of \prettyref{thm:ora_FNR_vs_ora_mFNR}.

We remark here on how to decide $\alpha_1$ and $\alpha_2$. For a given $\alpha$, if $\optFNR(\alpha)=\optmFNR(\alpha)$, then $\alpha_1=\alpha_2=\alpha$, and thus $\oracle(\alpha)$ coincides with $\maroracle(\alpha)$. If $\optFNR(\alpha)<\optmFNR(\alpha)$, \ie $b^{**}(\alpha)<b^*(\alpha)$, we have $\alpha_1<\alpha<\alpha_2$. Let $p=(\alpha_2-\alpha)/(\alpha_2-\alpha_1)$. Since $b^{**}(\alpha) = \Expect[b(U^*)] = pb(u^*(\alpha_1)) + (1-p)b(u^*(\alpha_2)) = pb^*(\alpha_1) + (1-p)b^*(\alpha_2) \geq pb^{**}(\alpha_1) + (1-p)b^{**}(\alpha_2) \geq b^{**}(p\alpha_1+(1-p)\alpha_2) = b^{**}(\alpha)$, we have $b^*(\alpha_1)=b^{**}(\alpha_1)$, $b^*(\alpha_2)=b^{**}(\alpha_2)$ and $b^{**}$ is linear on $[\alpha_1,\alpha_2]$. Thus, $\alpha_1 = \sup\{z\in[0,\alpha):\,b^{**}(z)=b^*(z)\}$ and $\alpha_2 = \inf\{z\in(\alpha,1]:\,b^{**}(z)=b^*(z)\}$. A special case is when $\optmFNR(\alpha) \geq \pi_1\pth{1 - \frac{\alpha}{\pi_0}}$ for any $\alpha\in[0,\pi_0]$, for instance, when $\mu$ is sufficiently small in \prettyref{ex:gaussian}. Then $\alpha_1=0$ and $\alpha_2=\pi_0$ for any $\alpha\in(0,\pi_0)$, in other words, $\oracle(\alpha)$ always randomizes between $\maroracle(0)=(0,\cdots,0)$ and $\maroracle(\pi_0)=(1,\cdots,1)$. 

\subsection{Proof of \prettyref{thm:high_prob_formulation}}\label{sec:ProofThm2}

We start with an elementary lemma.
\begin{lemma}\label{lmm:high-prob}
Suppose $X$ and $Y$ are two random variables such that $X\in[0,1]$.
If $\Prob\pth{X \leq \alpha} \geq 1 - \tau$ for some $\tau\in[0,1]$, then $\Prob\pth{\Expect[X|Y] \leq \alpha + \sqrt{\tau}} \geq 1 - \sqrt{\tau}$.
\end{lemma}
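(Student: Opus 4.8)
The plan is to pass from $X$ to $Z \triangleq \Expect[X\mid Y]$ by controlling the truncated first moment and then applying Markov's inequality. First I would observe that since $X\in[0,1]$, the convex function $x\mapsto (x-\alpha)_+$ satisfies $(X-\alpha)_+ \leq \indc{X>\alpha}$ pointwise (the left side is at most $1-\alpha\le 1$ whenever $X>\alpha$, and zero otherwise). Taking expectations gives
\[
\Expect[(X-\alpha)_+] \leq \Prob(X>\alpha) \leq \tau.
\]

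Next I would invoke the conditional Jensen inequality for the convex map $x\mapsto (x-\alpha)_+$: almost surely,
\[
(Z-\alpha)_+ = \pth{\Expect[X-\alpha\mid Y]}_+ \leq \Expect\qth{(X-\alpha)_+\mid Y}.
\]
Taking expectations and using the tower property, $\Expect[(Z-\alpha)_+] \leq \Expect[(X-\alpha)_+] \leq \tau$. Note $Z\in[0,1]$ as well, though only nonnegativity of $(Z-\alpha)_+$ is needed.

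Finally, assuming $\tau>0$, Markov's inequality applied to the nonnegative random variable $(Z-\alpha)_+$ at level $\sqrt\tau$ yields
\[
\Prob\pth{Z > \alpha + \sqrt\tau} = \Prob\pth{(Z-\alpha)_+ > \sqrt\tau} \leq \frac{\Expect[(Z-\alpha)_+]}{\sqrt\tau} \leq \frac{\tau}{\sqrt\tau} = \sqrt\tau,
\]
which is the claimed bound $\Prob(\Expect[X\mid Y]\leq \alpha+\sqrt\tau)\geq 1-\sqrt\tau$. The degenerate case $\tau=0$ is handled separately but trivially: then $\Prob(X\leq\alpha)=1$, so $\Expect[X\mid Y]\leq\alpha$ almost surely and the conclusion holds with $\sqrt\tau=0$. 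There is no real obstacle here; the only points requiring a moment's care are the pointwise bound $(X-\alpha)_+\le\indc{X>\alpha}$ (which uses $X\le 1$) and the correct invocation of conditional Jensen for the positive-part function, together with isolating the $\tau=0$ corner case before dividing.
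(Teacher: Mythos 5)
Your proof is correct and takes essentially the same route as the paper's: both decompose around the threshold $\alpha$, use $X\le 1$ to bound the excess of $X$ over $\alpha$ by the tail indicator, and finish with Markov's inequality at level $\sqrt{\tau}$. The only cosmetic difference is that the paper bounds $\Expect[X\mid Y=y]\le \alpha+\Prob\pth{X>\alpha\mid Y=y}$ and applies Markov to $\Prob\pth{X>\alpha\mid Y}$, whereas you package the same estimate through the positive-part function $(\Expect[X\mid Y]-\alpha)_+$ via conditional Jensen.
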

\begin{proof}
Define $\mu(y) = \Prob\pth{X > \alpha\,|\,Y=y}$ for $y$ in the support of $Y$. Then
$$\Expect[X|Y=y] = \Expect[X\indc{X>\alpha}|Y=y] + \Expect[X\indc{X \leq\alpha}|Y=y] \leq \mu(y) + \alpha$$ by noting that $X\in[0,1]$. By assumption, $\tau \geq \Prob\pth{X > \alpha} = \Expect[\mu(Y)]$. It follows that
\[
\Prob\pth{\Expect[X|Y] > \alpha + \sqrt{\tau}} \leq \Prob\pth{\mu(Y)> \sqrt{\tau}} \leq \frac{\Expect[\mu(Y)]}{\sqrt{\tau}} \leq \sqrt{\tau}
\]
where the second inequality is by the Markov inequality.
\end{proof}

We first prove the positive part of \prettyref{thm:high_prob_formulation}. Let $\maroracle(\alpha)=(\delta_1,\ldots,\delta_n)$ be the optimal rule for \prettyref{eq:opt_prob_mFDR&mFNR} from  \prettyref{thm:optimal_sol_of_mFDR&mFNR}. Recall the definition of $S_u$ in \prettyref{eq:S_u}, $(W_i,\delta_i)$ are \iid copies of $(W,S_{u^*(\alpha))})$. 
If $u^*(\alpha)=0$, $\delta_i=0$ almost surely for any $i\in[n]$, so we have $\text{FDP}(\maroracle(\alpha))=0=a(u^*(\alpha))$. If $u^*(\alpha)>0$, by the Law of Larger Numbers we have
\begin{align} 
    &\pth{\frac{1}{n}\sum_{i=1}^n\delta_i} \vee \frac{1}{n} \toprob  \Expect[S_{u^*(\alpha))}] \vee 0 = u^*(\alpha)\quad\text{as }n\to\infty,\label{convergence of denominator}\\
    &\frac{1}{n}\sum_{i=1}^n\delta_i(1-\theta_i) \toprob \pi_0\Expect_0[S_{u^*(\alpha))}] = \Expect[W\indc{S_{u^*(\alpha))}=1}]\quad\text{as }n\to\infty.\label{convergence of numerator}
\end{align}
Combining \eqref{convergence of denominator} and \eqref{convergence of numerator} and by the Slutsky's theorem, we have 
\[
\text{FDP}(\maroracle(\alpha)) \toprob  \frac{\Expect[W\indc{S_{u^*(\alpha))}=1}]}{u^*(\alpha)} = a(u^*(\alpha)) \leq \alpha.
\]
Similarly, $\text{FNP}(\maroracle(\alpha)) \toprob  b(u^*(\alpha)) =  b^*(\alpha)=\optmFNR(\alpha)$.

Next, we prove the negative part of \prettyref{thm:high_prob_formulation}. 
Suppose $\Prob\pth{\FDP(\delta^n) \leq \alpha} \geq 1-\tau_n$ for some $\tau_n=o(1)$ and $\FNR(\delta^n) \leq \beta$. We aim to show $\beta \geq \optmFNR(\alpha)=b^*(\alpha)$. The proof is trivial when $\alpha\geq\pi_0$ or $\beta\geq\pi_1$, so we assume $\alpha<\pi_0$ and $\beta<\pi_1$. 
By \prettyref{lmm:high-prob}, we have \[\Prob\pth{\Expect[\FDP(\delta^n)|\delta^n,X^n] \leq \alpha + \sqrt{\tau_n}} \geq 1-\sqrt{\tau_n}.\] Note that 
\begin{align*}
    \Expect[\FDP(\delta^n)|\delta^n,X^n] &= \Expect\left[\frac{\sum_{i=1}^n \delta_i(1-\theta_i)}{1\vee\sum_{i=1}^n \delta_i} \bigg| \delta^n, X^n\right] = \frac{\sum_{i=1}^n \delta_i \Expect[1-\theta_i|\delta^n,X^n]}{1\vee\sum_{i=1}^n \delta_i}\\
    &= \frac{\sum_{i=1}^n \delta_i \Expect[1-\theta_i|X_i]}{1\vee\sum_{i=1}^n \delta_i} = \frac{\sum_{i=1}^n \delta_i W_i}{1\vee\sum_{i=1}^n \delta_i} \geq \frac{1}{K_n} \sum_{i=1}^{K_n} W_{(i)}
\end{align*}
where $K_n\triangleq \sum_{i=1}^n \delta_i$, and $\frac{1}{K_n} \sum_{i=1}^{K_n} W_{(i)} \equiv 0$ if $K_n = 0$.
It follows 
\begin{align}
    \Prob\pth{\frac{1}{K_n} \sum_{i=1}^{K_n} W_{(i)} \leq \alpha + \sqrt{\tau_n}} \geq 1-\sqrt{\tau_n}.\label{eq:ineq1}
\end{align}
For any $\epsilon>0$, we show the following:
\begin{itemize}
    \item [(a)] $\lim_{n\to\infty}\Prob\pth{a(K_n/n)\leq\alpha+\sqrt{\tau_n}+\epsilon/2}=1$.
    \item [(b)] $\Expect[b(K_n/n)]\leq\beta+2\epsilon$ for sufficiently large $n$.
\end{itemize}
We first prove (a). Without loss of generality, assume $\epsilon\in(0,\pi_0-\alpha)$. By the continuity of $a$ on $(0,1]$ (\prettyref{lmm:properties_of_a&b}) and $a(1)=\pi_0$, there exists $\delta\in(0,1)$ such that $a(1-\delta)>\alpha+\epsilon$. Denote $\frac{1}{K_n} \sum_{i=1}^{K_n} W_{(i)}$ by $\overline{W}_{K_n}$. Following \eqref{eq:ineq1}, we have 
\begin{align*}
    1-\sqrt{\tau_n} \leq \Prob\pth{\overline{W}_{K_n}\leq \alpha + \sqrt{\tau_n},\,\frac{K_n}{n}>\delta} + \Prob\pth{\overline{W}_{K_n}\leq \alpha + \sqrt{\tau_n},\,\frac{K_n}{n} \leq \delta}.
\end{align*}
Note that $\overline{W}_{K_n}$ is increasing in $K_n$. Hence, 
\begin{align*}
    &\quad\Prob\pth{\overline{W}_{K_n}\leq \alpha + \sqrt{\tau_n},\,\frac{K_n}{n} \leq \delta} = \Prob\pth{\overline{W}_{\delta n}\leq \alpha + \sqrt{\tau_n},\,\frac{K_n}{n} \leq \delta}\\
    &\leq \Prob\pth{\overline{W}_{\delta n}\leq \alpha + \sqrt{\tau_n},\,\frac{K_n}{n} \leq \delta,\,|\overline{W}_{\delta n}-a(\delta)|\leq\frac{\epsilon}{2}}\\
    &+ \Prob\pth{\overline{W}_{\delta n}\leq \alpha + \sqrt{\tau_n},\,\frac{K_n}{n} \leq \delta,\,|\overline{W}_{\delta n}-a(\delta)|>\frac{\epsilon}{2}}\\
    &\leq \Prob\pth{a(\delta)\leq \alpha+\sqrt{\tau_n}+\frac{\epsilon}{2},\,\frac{K_n}{n} \leq \delta} + \Prob\pth{|\overline{W}_{\delta n}-a(\delta)|>\frac{\epsilon}{2}}\\
    &= \Prob\pth{a(\delta)\leq \alpha+\sqrt{\tau_n}+\frac{\epsilon}{2},\,\frac{K_n}{n} \leq \delta} + o(1)\\
    &= \Prob\pth{a(K_n/n)\leq \alpha+\sqrt{\tau_n}+\frac{\epsilon}{2},\,\frac{K_n}{n} \leq \delta} + o(1),
\end{align*}
where the second last equality is because of \prettyref{cor:UnionBound} and the last equality is by noting that $a$ is a increasing function. Next, 
\begin{align*}
    &\quad\Prob\pth{\overline{W}_{K_n}\leq \alpha + \sqrt{\tau_n},\,\frac{K_n}{n}>\delta}\\
    &\leq \Prob\pth{\overline{W}_{K_n}\leq \alpha + \sqrt{\tau_n},\,\frac{K_n}{n} >\delta,\,|\overline{W}_{K_n}-a(K_n/n)|\leq\frac{\epsilon}{2}}\\
    &+ \Prob\pth{\overline{W}_{K_n}\leq \alpha + \sqrt{\tau_n},\,\frac{K_n}{n} >\delta,\,|\overline{W}_{K_n}-a(K_n/n)|>\frac{\epsilon}{2}}\\
    &\leq \Prob\pth{a(K_n/n)\leq \alpha+\sqrt{\tau_n}+\frac{\epsilon}{2},\,\frac{K_n}{n} >\delta} + \Prob\pth{\frac{K_n}{n} >\delta,\,|\overline{W}_{K_n}-a(K_n/n)|>\frac{\epsilon}{2}}\\
    &= \Prob\pth{a(K_n/n)\leq \alpha+\sqrt{\tau_n}+\frac{\epsilon}{2},\,\frac{K_n}{n} >\delta} + o(1)\quad\text{by \prettyref{cor:UnionBound}.}
\end{align*}
Since $a$ is a increasing function and $a(1-\delta)>\alpha+\epsilon>\alpha+\sqrt{\tau_n}+\frac{\epsilon}{2}$ for sufficiently large $n$, then $\Prob\pth{a(K_n/n)\leq \alpha+\sqrt{\tau_n}+\frac{\epsilon}{2},\,\frac{K_n}{n}>\delta} = \Prob\pth{a(K_n/n)\leq \alpha+\sqrt{\tau_n}+\frac{\epsilon}{2},\,\delta<\frac{K_n}{n}\leq 1-\delta}$. Therefore,
\begin{align*}
    1-\sqrt{\tau_n}&\leq \Prob\pth{a(K_n/n)\leq \alpha+\sqrt{\tau_n}+\frac{\epsilon}{2},\,\frac{K_n}{n} \leq \delta}\\
    &+ \Prob\pth{a(K_n/n)\leq \alpha+\sqrt{\tau_n}+\frac{\epsilon}{2},\,\delta<\frac{K_n}{n}\leq 1-\delta} + o(1)\\
    &= \Prob\pth{a(K_n/n)\leq \alpha+\sqrt{\tau_n}+\frac{\epsilon}{2},\,\frac{K_n}{n}\leq 1-\delta} + o(1).
\end{align*}
Taking the limits for both sides, we have $\lim\limits_{n\to\infty}\Prob(K_n/n>1-\delta)=0$ and $\lim\limits_{n\to\infty}\Prob(a(K_n/n)\leq \alpha+\sqrt{\tau_n}+\frac{\epsilon}{2})=1$.

Next, we prove (b). By the assumption $\beta \geq \FNR(\delta^n)$ and noting that $$\FNR(\delta^n)\geq \Expect\left[\frac{1}{n-K_n}\sum_{i=K_n+1}^{n} (1-W_{(i)})\right]$$ in \prettyref{eq:lb_for _FDR_FNR}, we have $$\beta \geq \Expect\left[\frac{1}{n-K_n}\sum_{i=K_n+1}^{n} (1-W_{(i)})\right] \geq \Expect\left[\frac{1}{n-K_n}\sum_{i=K_n+1}^{n} (1-W_{(i)})\cdot\indc{K_n/n\leq 1-\delta}\right],$$ where $\delta$ is the one chosen in part (a). By \prettyref{cor:UnionBound} and noting that $\Prob(K_n/n>1-\delta)=o(1)$, for sufficiently large $n$, we have
\begin{align*}
  &\quad\Expect\left[\frac{1}{n-K_n}\sum_{i=K_n+1}^{n} (1-W_{(i)})\cdot\indc{K_n/n\leq 1-\delta}\right]\geq\Expect\left[b\pth{K_n/n}\cdot\indc{K_n/n\leq 1-\delta}\right] - \epsilon\\  
  &=\Expect\left[b\pth{K_n/n}\right] - \Expect\left[b\pth{K_n/n}\cdot\indc{K_n/n>1-\delta}\right] - \epsilon\\
  &\geq \Expect\left[b\pth{K_n/n}\right] - \Prob\pth{K_n/n>1-\delta} - \epsilon \geq \Expect\left[b\pth{K_n/n}\right] - 2\epsilon.
\end{align*}
Therefore, $\Expect[b(K_n/n)]\leq\beta+2\epsilon$ for sufficiently large $n$. 

Finally, let $E_1^n=\{b(K_n/n)\leq\beta+3\epsilon\}$ and $E_2^n=\{a(K_n/n)\leq \alpha+\sqrt{\tau_n}+\frac{\epsilon}{2}\}$. By (b), we have $\beta+2\epsilon\geq\Expect[b(K_n/n)\cdot\indc{b(K_n/n)>\beta+3\epsilon}]\geq(\beta+3\epsilon)\Prob((E_1^n)^c)$. Hence, $\Prob((E_1^n)^c)\leq\frac{\beta+2\epsilon}{\beta+3\epsilon}$. By (a), for sufficiently large $n$, $\Prob(E_2^n)\geq\frac{\beta+2.5\epsilon}{\beta+3\epsilon}$. Then, $\Prob(E_1^n\cap E_2^n)\geq\Prob(E_2^n)-\Prob((E_1^n)^c)\geq\frac{\epsilon}{2(\beta+3\epsilon)}>0$. Hence, there exists $k_n$ such that $b(\frac{k_n}{n})\leq\beta+3\epsilon$ and $a(\frac{k_n}{n})\leq\alpha+\sqrt{\tau_n}+\frac{\epsilon}{2}$. By the definition of $b^*$ in \prettyref{eq:opt_prob_a&b}, we have $b^*(\alpha+\sqrt{\tau_n}+\frac{\epsilon}{2})\leq\beta+3\epsilon$. By first letting $n\to\infty$ and then $\epsilon\downarrow 0$ and noting that $b^*$ is right-continuous (\prettyref{lmm:properties_of_b}), we obtain $\beta\geq b^*(\alpha)$.

\section{Data-driven procedure}\label{sec:DataDrivenProcedure}
The preceding Theorem \ref{thm:ora_FNR_vs_ora_mFNR} determines the optimal FDR-FNR tradeoff for every null and non-null densities $f_0$ and $f_1$. 
Complementing this result, in this section we show that the same optimality can be attained without knowing $f_1$, assuming only the knowledge of $f_0$ and $\pi_0$.
Since this adaptive procedure is based on approximating the randomized oracle rule \prettyref{eq:asym_opt_sol_of_FDR&FNR}, it is regarded more as a proof of concept than a practical recommendation that allows us 
to compare its performance with existing approaches (that do not depend on $f_1$ either) and demonstrate the suboptimality of the latter in a wide range of scenarios. (See \prettyref{sec:Simulation}.)

Assume that $\pi_0$ and $f_0$ are known and that $W= \Lfdr(X)=\pi_0f_0(X)/f(X)$ is a continuous random variable with CDF $G$ and pdf $g$. 
 First, for $y\in[0,1]$, define
\begin{align}\label{eq:A(y)&B(y)}
A(y) = 
\begin{cases}
\Expect[W\,|\,W\leq y] & \text{if } \Prob(W \leq y) > 0\\
0 & \text{if } \Prob(W \leq y) = 0
\end{cases},\quad B(y)=
\begin{cases}
\Expect[W\,|\,W> y] & \text{if } \Prob(W > y) > 0\\
0 & \text{if } \Prob(W > y) = 0
\end{cases}.
\end{align}
By replacing $G^{-1}(u)$ with $y$, \eqref{eq:opt_prob_a&b} is equivalent to
\begin{align}
    \begin{split}
     b^*(\alpha) = & \inf_{y\in[0,1]} \,\,\,B(y)\\
    & \,\,\,\,\,\text{s.t.}\quad A(y)\leq \alpha
    \end{split}.
    \label{eq:opt_prob_A&B}
\end{align}
Since $A$ is increasing and left-continuous on $[0,1]$ and $B$ is decreasing on $[0,1]$, \eqref{eq:opt_prob_A&B} has an optimal solution
\begin{align}
    y^*(\alpha) = \sup\{y\in[0,1]:\,A(y)\leq\alpha\}.\label{eq:y*}
\end{align}

Suppose that we are provided with an estimate of the marginal pdf $f$, denoted by $\hat{f}$, based on the observations $X_1,\cdots,X_n$. (For example, under smoothness conditions, $\hat{f}$ can be a kernel density estimator; under monotonicity constraints, $\hat{f}$ can be the Grenander estimator \cite{birge1989grenader}.) We first estimate $ b^*$, the optimal objective of \eqref{eq:opt_prob_A&B}. The idea is the same as the adaptive $z$-value-based procedure proposed in \cite[Section 4.2]{sun2007oracle}. We estimate $W_i$ by $\widehat{W}_i \triangleq  \frac{\pi_0 f_0(X_i)}{\hat{f}(X_i)}\wedge 1$ for $i=1,\cdots,n$. Then by substituting the empirical distribution of $\widehat{W}_1,\cdots,\widehat{W}_n$ for the true distribution of $W$ in the expectations in \eqref{eq:A(y)&B(y)}, $A(y)$ and $B(y)$ are estimated by 
\begin{align}
    \widehat{A}(y)\triangleq  \frac{\sum_{i=1}^n \widehat{W}_i\indc{\widehat{W}_i\leq y}}{1\vee\sum_{i=1}^n \indc{\widehat{W}_i\leq y}}\text{ and }\widehat{B}(y)\triangleq  \frac{\sum_{i=1}^n (1-\widehat{W}_i)\indc{\widehat{W}_i> y}}{1\vee\sum_{i=1}^n \indc{\widehat{W}_i> y}}.\label{eq:A_hat&B_hat}
\end{align}
Then we estimate $ b^*(\alpha)$ by replacing $A$ and $B$ with $\widehat{A}$ and $\widehat{B}$ in \eqref{eq:opt_prob_A&B}, \ie
\begin{align}
\begin{split}
    \widehat{b^*}(\alpha) \triangleq  & \inf_{y\in[0,1]} \,\,\,\widehat{B}(y)\\
    & \,\,\,\,\,\text{s.t.}\quad \widehat{A}(y) \leq \alpha
\end{split}.
\label{eq:OPT_3_hat}    
\end{align}
Since $\widehat{B}$ is decreasing, $\widehat{A}$ is increasing, and both are right-continuous and piecewise constant, the optimal solution of \eqref{eq:OPT_3_hat}, denoted by $\widehat{y^*}(\alpha)$, is any value in $[l,r)$, where $l = \max\{y\in\{0,\widehat{W}_1,\cdots,\widehat{W}_n\}:\widehat{A}(y)\leq \alpha\}$ and $r=\min\{y\in\{0,\widehat{W}_1,\cdots,\widehat{W}_n\}:\widehat{A}(y)> \alpha\}$. Since any choice of $\widehat{y^*}(\alpha)$ gives the same rejecting rule, without loss of generality, we choose
\begin{align}\label{eq:y*_hat}
\widehat{y^*}(\alpha)=\max\{y\in\{0,\widehat{W}_1,\cdots,\widehat{W}_n\}:\widehat{A}(y)\leq \alpha\}.
\end{align}
Thus, $\widehat{b^*}(\alpha) = \widehat{B}(\widehat{y^*}(\alpha))$ is decreasing, piecewise constant and right-continuous in $\alpha\in[0,1]$. It was proved in \cite{sun2007oracle} that the following decision rule (known as the adaptive $z$-value-based procedure),
\begin{align}\label{eq:Sun&Cai}
    \pth{\indc{\widehat{W}_1 \leq \widehat{y^*}(\alpha)},\cdots,\indc{\widehat{W}_n \leq \widehat{y^*}(\alpha)}},
\end{align}
asymptotically controls mFDR at level $\alpha$ and mFNR at level $ b^*(\alpha)$ under appropriate conditions. 
Using similar arguments, it can be shown that the FDR and FNR are also asymptotically controlled at the same levels. However, 
rule \prettyref{eq:Sun&Cai} need not to be optimal, as \prettyref{thm:ora_FNR_vs_ora_mFNR} shows that $\optFNR$ is the GCM of $b^*$. This suggests we should mimic the oracle in \prettyref{eq:asym_opt_sol_of_FDR&FNR} in a data-driven way. 

To this end, let us estimate $\optFNR$ by the GCM of $\widehat{b^*}$, denoted by $\widehat{\optFNR}$, which is piecewise linear and easy to compute.\footnote{For example, this can be done by using the function \texttt{gcmlcm()} in the R package \texttt{fdrtool}.}
Denote its knots by $(s_1,t_1),\cdots,(s_m,t_m)$, where $0=s_1<s_2<\cdots<s_m=1$ and $t_1\geq t_2\geq \cdots\geq t_m=0$. For any $\alpha\in(0,1)$, there exists a unique $k_0\in\{1,\cdots,m\}$ such that $\alpha\in[s_{k_0},s_{k_0+1})$. Let $\hat p = (s_{k_0+1}-\alpha)/(s_{k_0+1}-s_{k_0})$. Then we propose the following adaptive procedure: 
\begin{equation}\label{eq:globally-randomized}
    \begin{split}
    &\text{with probability }\hat p,\qquad\delta_i=\indc{\widehat{W}_i \leq \widehat{y^*}(s_{k_0})},\,i=1,\cdots,n\\
    &\text{with probability }1-\hat p,\,\,\delta_i=\indc{\widehat{W}_i \leq \widehat{y^*}(s_{k_0+1})},\,i=1,\cdots,n
    \end{split}
\end{equation}
where $\widehat{y^*}$ is given in \eqref{eq:y*_hat}. 
We summarize this procedure into \prettyref{algo:globally-randomized}. We show in \prettyref{prop:uniform_consistency} that all the intermediate estimators are uniformly consistent, and in \prettyref{prop:consistency} that under  
monotonicity assumptions on the density, the decision rule \prettyref{eq:globally-randomized} asymptotically achieves the oracle performance.
See  Appendices \ref{app:ProofConsistency1} and \ref{app:ProofConsistency2} for proofs.

\begin{algorithm}
\caption{Data-driven procedure}\label{algo:globally-randomized}
\begin{algorithmic}[1]  
\renewcommand{\algorithmicrequire}{\textbf{Input:}}
\State\textbf{Input:} FDR constraint $\alpha\in(0,1)$, null proportion $\pi_0\in(0,1)$, null pdf $f_0$, observed data $X^n=(X_1,\cdots,X_n)$, marginal density estimator $\hat{f}$ based on $X^n$.

\State Compute $\widehat{W_i} = \frac{\pi_0f_0(X_i)}{\hat{f}(X_i)}\wedge 1$ for $i=1,\cdots,n$.

\State Compute $a_0=\hat{A}(0)$, $b_0=\hat{B}(0)$, and $a_i = \hat{A}(\widehat{W_i})$, $b_i = \hat{B}(\widehat{W_i})$ defined in \eqref{eq:A_hat&B_hat}, for $i=1,\cdots,n$.

\State Compute the GCM of the right-continuous and piecewise constant function generated by points $(a_0,b_0),\cdots,(a_n,b_n)$ and $(1,0)$, and represent the GCM by its knots $(s_1,t_1),\cdots,(s_m,t_m)$, where $0=s_1<s_2<\cdots<s_m=1$ and $t_1\geq t_2\geq \cdots\geq t_m=0$. 

\State Choose the unique $k_0\in\{1,\cdots,m\}$ such that $\alpha\in[s_{k_0},s_{k_0+1})$ and compute $\hat p=\frac{s_{k_0+1}-\alpha}{s_{k_0+1}-s_{k_0}}$.

\State Compute $\widehat{y^*}(s_{k_0})$ and $\widehat{y^*}(s_{k_0+1})$ defined in \eqref{eq:y*_hat}.

\State Apply the decision rule \eqref{eq:globally-randomized}.

\end{algorithmic} 
\end{algorithm}

\begin{proposition}\label{prop:uniform_consistency}
Suppose $f_0(x)=\indc{0<x<1}$, $f_1(x)$ is decreasing and continuous on $(0,1)$, and $W$ has CDF $G$ and positive pdf $g$ on $(0,1)$. Let $\hat{f}$ be the Grenander estimator \cite{birge1989grenader} of $f$, namely, the slope of the least concave majorant of the empirical CDF of the data $X_1,\cdots,X_n$.
Let $\widehat{W}_i = \frac{\pi_0}{\hat{f}(X_i)}\wedge 1$, for $i=1,\cdots,n$. Let $\widehat{A}$, $\widehat{B}$ and $\widehat{b^*}$ be defined in \eqref{eq:A_hat&B_hat}-\eqref{eq:OPT_3_hat}. Let $\widehat{\optFNR}$ be the GCM of $\widehat{b^*}$. Then we have $\|\widehat{A}-A\|_\infty\toprob 0$, $\|\widehat{B}-B\|_\infty\toprob 0$, $\|\widehat{b^*}- b^*\|_\infty\toprob 0$, and $\|\widehat{\optFNR}-\optFNR\|_\infty\toprob 0$, as $n\to\infty$, where $A$, $B$ and $ b^*$ are defined in \eqref{eq:A(y)&B(y)}-\eqref{eq:opt_prob_A&B}.
\end{proposition}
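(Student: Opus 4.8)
The plan is to establish the four uniform convergences in the stated order, each reducing to the previous one. We will use repeatedly that $f=\pi_0f_0+\pi_1f_1=\pi_0+\pi_1f_1$ is a \emph{decreasing} density on $(0,1)$ (so that the Grenander estimator is the right object) and that $W=\pi_0/f(X)\in(0,1]$; moreover the hypothesis $g>0$ on all of $(0,1)$ forces $f(0+)=+\infty$, hence $W$ has support $[0,1]$, so that $A(y)=\Expect[W\mid W\le y]$ is continuous and \emph{strictly} increasing on $[0,1]$ with $A(0)=0$, $A(1)=\pi_0$, and $B(y)=\Expect[1-W\mid W>y]$ is continuous and \emph{strictly} decreasing with $B(0)=\pi_1$, $B(1)=0$.

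The heart of the matter is $\|\widehat A-A\|_\infty\toprob0$ and $\|\widehat B-B\|_\infty\toprob0$. I would interpose the ``oracle'' empirical functionals $A_n,B_n$ obtained by plugging the true Lfdr values $W_i$ in place of $\widehat W_i$ into \eqref{eq:A_hat&B_hat}, and write $\|\widehat A-A\|_\infty\le\|\widehat A-A_n\|_\infty+\|A_n-A\|_\infty$. The term $\|A_n-A\|_\infty\toas0$ is a routine uniform law of large numbers: for small $\epsilon$ one has $A_n(y),A(y)\le y\le\epsilon$ on $[0,\epsilon]$, while on $[\epsilon,1]$ the denominators exceed $\Prob(W\le\epsilon)>0$, so Glivenko--Cantelli over the monotone bounded classes $\{\indc{\cdot\le y}\}$ and $\{w\indc{w\le y}\}$ finishes it; the same handles $B_n-B$. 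The delicate term is $\|\widehat A-A_n\|_\infty$. Since the Grenander estimator is uniformly consistent only on compact subintervals of $(0,1)$ — by its classical pointwise consistency together with a Dini-type argument, valid because $f$ is continuous with $\pi_0\le f\le f(\eta)<\infty$ on $[\eta,1-\eta]$ — I would fix $\eta\in(0,1/2)$ and split the \emph{indices} into $I_\eta=\{i:X_i\in[\eta,1-\eta]\}$ and its complement. For $i\in I_\eta$ one gets $\max_{i\in I_\eta}|\widehat W_i-W_i|\le\sup_{x\in[\eta,1-\eta]}|(\pi_0/\hat f(x))\wedge1-\pi_0/f(x)|=:\Delta_n\toprob0$ (using that $\hat f$ is eventually bounded below by $\pi_0/2$ there), so the mismatch between $\indc{\widehat W_i\le y}$ and $\indc{W_i\le y}$ only affects indices with $|W_i-y|\le\Delta_n$, of empirical fraction at most $\sup_t(G(t+\Delta_n)-G(t-\Delta_n))+2\|G_n-G\|_\infty\to0$ uniformly in $y$ by uniform continuity of the continuous CDF $G$ (here $G_n$ is the empirical CDF of $W_1,\dots,W_n$); the indices in $I_\eta^c$ are asymptotically negligible, since $\tfrac1n|I_\eta^c|\to\Prob(X\notin[\eta,1-\eta])\to0$ as $\eta\downarrow0$, and each such index perturbs any of the (normalized) numerators by at most $1/n$ because $\widehat W_i,W_i,1-\widehat W_i,1-W_i\in[0,1]$. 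Assembling these bounds (splitting $y$ at $\epsilon$ once more to keep the empirical denominators bounded below for $y\ge\epsilon$), taking $\limsup_n$ and then $\eta\downarrow0$ gives $\|\widehat A-A_n\|_\infty\toprob0$; the same template yields $\|\widehat B-B_n\|_\infty\toprob0$.

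For $\|\widehat{b^*}-b^*\|_\infty\toprob0$: continuity and strict monotonicity of $A$ make $y^*(\alpha)=\sup\{y:A(y)\le\alpha\}$ continuous ($=A^{-1}(\alpha)$ for $\alpha\le\pi_0$, $=1$ otherwise), so $b^*=B\circ y^*$ is continuous, hence uniformly continuous on $[0,1]$ with some modulus $\omega$. Set $\xi_n=\|\widehat A-A\|_\infty\vee\|\widehat B-B\|_\infty\toprob0$. The point $y=y^*((\alpha-\xi_n)\vee0)$ is feasible for \eqref{eq:OPT_3_hat} (since $\widehat A(y)\le A(y)+\xi_n\le\alpha$), so $\widehat{b^*}(\alpha)\le\widehat B(y)\le b^*((\alpha-\xi_n)\vee0)+\xi_n\le b^*(\alpha)+\omega(\xi_n)+\xi_n$; conversely, $\widehat A(y)\le\alpha$ forces $A(y)\le\alpha+\xi_n$, \ie $y\le y^*((\alpha+\xi_n)\wedge1)$, so monotonicity of $B$ gives $\widehat{b^*}(\alpha)=\inf_{\widehat A(y)\le\alpha}\widehat B(y)\ge B(y^*((\alpha+\xi_n)\wedge1))-\xi_n\ge b^*(\alpha)-\omega(\xi_n)-\xi_n$. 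Hence $\|\widehat{b^*}-b^*\|_\infty\le\omega(\xi_n)+\xi_n\toprob0$. Finally, $\optFNR$ is the GCM of $\optmFNR=b^*$ by \prettyref{thm:ora_FNR_vs_ora_mFNR} while $\widehat{\optFNR}$ is the GCM of $\widehat{b^*}$, and the GCM operator on $[0,1]$ is non-expansive in $\|\cdot\|_\infty$: if an affine $\ell$ satisfies $\ell\le h_1$, then $\ell-\|h_1-h_2\|_\infty\le h_2$, so $\mathrm{GCM}(h_2)\ge\mathrm{GCM}(h_1)-\|h_1-h_2\|_\infty$, and symmetrically. Therefore $\|\widehat{\optFNR}-\optFNR\|_\infty\le\|\widehat{b^*}-b^*\|_\infty\toprob0$.

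I expect the main obstacle to be the bound $\|\widehat A-A_n\|_\infty\toprob0$: one cannot invoke a global sup-norm consistency statement for $\hat f$ because it genuinely fails at both endpoints (where $f$ is infinite, respectively zero), so the proof must quarantine the boundary observations and control their aggregate influence, while the interior analysis must simultaneously track the perturbation of the Lfdr \emph{values} $\widehat W_i$ and of the thresholding \emph{indicators} $\indc{\widehat W_i\le y}$.
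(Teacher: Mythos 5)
Your proposal is correct, but it follows a genuinely different route from the paper's in both of its main technical steps. The paper's proof hinges on first establishing the \emph{global} bound $\max_{1\leq i\leq n}|\widehat{W}_i-W_i|\toprob 0$ (its step 1): although $\hat f$ itself is not uniformly consistent on all of $(0,1)$, the map $x\mapsto (\pi_0/\hat f(x))\wedge 1$ \emph{is} uniformly consistent for $\pi_0/f$, because near $0$ both quantities are forced below any $\delta$ (since $f(0+)=\infty$ and $\hat f$ is decreasing with $\hat f(c_1)\approx f(c_1)$ large), while near $1$ both are squeezed into $(1-\delta,1]$ (since $f\geq\pi_0$ there --- note your parenthetical that $f$ vanishes at the right endpoint is incorrect, $f=\pi_0+\pi_1 f_1\geq\pi_0$; it is $\hat f$ undershooting $\pi_0$ that the cap $\wedge 1$ repairs). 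With this global max bound in hand the paper gets \emph{pointwise} consistency of $\widehat{A}(y)$ and $\widehat{B}(y)$ directly and then upgrades to uniformity via a Dini/P\'olya lemma for monotone functions with continuous monotone limits (\prettyref{lmm:uniform_convergence}); your quarantine of the indices with $X_i\notin[\eta,1-\eta]$ plus the Glivenko--Cantelli decomposition through the oracle functionals $A_n,B_n$ is more machinery than needed but is sound, provided you track (as you indicate) both the perturbation of the values $\widehat W_i$ and of the indicators, and keep the empirical denominators bounded below for $y\geq\epsilon$. For the third claim the paper takes a longer path: it proves $\sup_\alpha|\widehat{y^*}(\alpha)-y^*(\alpha)|\toprob 0$ via continuous piecewise-linear envelopes $\widehat{A}^\pm$ of $\widehat{A}$ and a contradiction argument exploiting strict monotonicity of $A$, and only then composes with the uniformly continuous $B$; your direct value sandwich (feasibility of $y^*((\alpha-\xi_n)\vee 0)$ for the empirical program, and the implication $\widehat{A}(y)\leq\alpha\Rightarrow y\leq y^*((\alpha+\xi_n)\wedge 1)$) is cleaner and avoids estimating the argmax altogether; just note that for $\alpha<\xi_n$ feasibility of $y^*(0)=0$ must be checked separately, which holds since $\widehat{A}(0)=0$. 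The final GCM step is identical to the paper's (non-expansiveness of the GCM operator in $\|\cdot\|_\infty$).
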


\begin{proposition}\label{prop:consistency}
Under the same conditions of \prettyref{prop:uniform_consistency}, the data-driven procedure $\delta^n$ in \eqref{eq:globally-randomized} is asymptotically optimal,  satisfying $\limsup\limits_{n\to\infty}\sup\limits_{\alpha\in[0,1]}\pth{\FDR(\delta^n)-\alpha}\leq 0$ and $\lim\limits_{n\to\infty}\sup\limits_{\alpha\in[0,1]}|\FNR(\delta^n)- \optFNR(\alpha)|= 0$.
\end{proposition}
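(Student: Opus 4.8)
The plan is to reduce the analysis of the randomized rule \prettyref{eq:globally-randomized} to elementary properties of the estimated GCM together with the uniform consistency established in \prettyref{prop:uniform_consistency}, working entirely at the level of expectations so that no concentration inequality is required. Write the rule as $\delta^n = B\,\delta^{n,1} + (1-B)\,\delta^{n,2}$, where, conditionally on $X^n$, the coin $B\sim\Bern(\hat p)$ is independent of $\theta^n$ and $\delta^{n,j}_i = \indc{\widehat{W}_i\le \widehat{y^*}(s_{k_0+j-1})}$ for $j=1,2$; note $\hat p$, $s_{k_0}$, $s_{k_0+1}$ and $\widehat{y^*}$ are all $X^n$-measurable. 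Since $\Expect[1-\theta_i\mid X^n]=W_i$ and the $\delta^{n,j}_i$ are $X^n$-measurable, with $R_j\triangleq\sum_i\delta^{n,j}_i$ one has $\Expect[\FDP(\delta^{n,j})\mid X^n] = (1\vee R_j)^{-1}\sum_i\delta^{n,j}_i W_i$ and $\Expect[\FNP(\delta^{n,j})\mid X^n] = (1\vee(n-R_j))^{-1}\sum_i(1-\delta^{n,j}_i)(1-W_i)$. First I would set $\rho_n\triangleq\max_{1\le i\le n}|\widehat{W}_i-W_i|\le 1$ and replace $W_i$ by $\widehat{W}_i$ in each numerator at an additive cost of at most $\rho_n$, turning these conditional means into $\widehat{A}(\widehat{y^*}(s_{k_0+j-1}))$ and $\widehat{B}(\widehat{y^*}(s_{k_0+j-1}))$ up to error at most $\rho_n$.

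For the FDR claim the two facts needed are $\widehat{A}(\widehat{y^*}(s))\le s$, which is immediate from \prettyref{eq:y*_hat}, and the identity $\hat p\,s_{k_0}+(1-\hat p)\,s_{k_0+1}=\alpha$, which follows from $\hat p=(s_{k_0+1}-\alpha)/(s_{k_0+1}-s_{k_0})$. Averaging the conditional-mean bound over the coin and then over $X^n$ will give $\FDR(\delta^n)\le \Expect[\hat p\,s_{k_0}+(1-\hat p)\,s_{k_0+1}]+\Expect[\rho_n] = \alpha+\Expect[\rho_n]$, uniformly in $\alpha$. Since $0\le\rho_n\le 1$ and $\rho_n\toprob 0$ by \prettyref{prop:uniform_consistency}, bounded convergence yields $\epsilon_n\triangleq\Expect[\rho_n]\to 0$, hence $\limsup_n\sup_\alpha(\FDR(\delta^n)-\alpha)\le 0$.

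For the FNR I would obtain the upper bound the same way: using the identity $\widehat{B}(\widehat{y^*}(s))=\widehat{b^*}(s)$, the fact that $s_{k_0},s_{k_0+1}$ are consecutive knots of $\widehat{\optFNR}$ (so $\widehat{b^*}$ agrees with $\widehat{\optFNR}$ there) and that $\widehat{\optFNR}$ is affine on $[s_{k_0},s_{k_0+1}]$, the convex combination of $\widehat{B}(\widehat{y^*}(s_{k_0}))$ and $\widehat{B}(\widehat{y^*}(s_{k_0+1}))$ with weights $\hat p,1-\hat p$ equals $\widehat{\optFNR}(\alpha)$; thus $\FNR(\delta^n)\le \Expect[\widehat{\optFNR}(\alpha)]+\epsilon_n\le \optFNR(\alpha)+\Expect[\|\widehat{\optFNR}-\optFNR\|_\infty]+\epsilon_n$, which is $\optFNR(\alpha)+o(1)$ uniformly in $\alpha$ by \prettyref{prop:uniform_consistency} and bounded convergence. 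For the matching lower bound I would invoke the fundamental limit directly: since $\FDR(\delta^n)\le \alpha+\epsilon_n$, the rule is feasible for the problem defining $\optFNR_n(\alpha+\epsilon_n)$, so $\FNR(\delta^n)\ge \optFNR_n(\alpha+\epsilon_n)$; by \prettyref{prop:asymp_opt} the nonincreasing functions $\optFNR_n$ converge pointwise on $[0,1]$ to the continuous limit $\optFNR$ (\prettyref{lmm:property_of_b_doule_star}), so the convergence is uniform (P\'olya's theorem), and combining this with the uniform continuity of $\optFNR$ on $[0,1]$ and $\epsilon_n\to0$ gives $\sup_\alpha(\optFNR(\alpha)-\optFNR_n(\alpha+\epsilon_n))\to 0$. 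The two bounds together yield $\sup_\alpha|\FNR(\delta^n)-\optFNR(\alpha)|\to 0$.

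The hard part will be establishing $\rho_n=\max_i|\widehat{W}_i-W_i|\toprob 0$, equivalently that replacing $W_i$ by $\widehat{W}_i$ in the conditional means costs only $o_P(1)$: the Grenander estimator is uniformly consistent on compact subintervals of $(0,1)$ but need not be near the endpoints, and the denominators $R_j$ and $n-R_j$ can themselves be $o(n)$ when the relevant threshold $\widehat{y^*}(\cdot)$ is close to $0$ or $1$, so one must argue that the vanishing fraction of observations near the boundary cannot inflate the error — this is precisely the boundary analysis underlying \prettyref{prop:uniform_consistency}. The only remaining bookkeeping is to check that each $o(1)$ term above is uniform in $\alpha\in[0,1]$, which holds because $\rho_n$ and $\|\widehat{\optFNR}-\optFNR\|_\infty$ do not depend on $\alpha$ and the endpoint cases $\alpha\in\{0,1\}$ are handled trivially by the same estimates.
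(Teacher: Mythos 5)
Your treatment of the FDR bound and of the FNR \emph{upper} bound is exactly the paper's argument: condition on $X^n$, replace $\theta_i$ by $W_i$, replace $W_i$ by $\widehat W_i$ at additive cost $\omega_n\triangleq\max_i|\widehat W_i-W_i|$ (your $\rho_n$; its convergence $\omega_n\toprob 0$ is Step~1 of the proof of \prettyref{prop:uniform_consistency}, so you may simply cite it), and then use $\widehat A(\widehat{y^*}(s))\le s$, $\widehat B(\widehat{y^*}(s))=\widehat{b^*}(s)$, the affine interpolation identities $\hat p\,s_{k_0}+(1-\hat p)s_{k_0+1}=\alpha$ and $\hat p\,\widehat{b^*}(s_{k_0})+(1-\hat p)\widehat{b^*}(s_{k_0+1})=\widehat{\optFNR}(\alpha)$, and bounded convergence. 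All of that is correct and uniform in $\alpha$.

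Where you diverge is the FNR \emph{lower} bound, and there your route has a gap you have not closed. You discard the conditional-expectation computation and instead argue $\FNR(\delta^n)\ge\optFNR_n(\alpha+\epsilon_n)$ by feasibility, then try to pass to $\optFNR(\alpha)$ via P\'olya's theorem and uniform continuity of $\optFNR$ on $[0,1]$. But \prettyref{lmm:property_of_b_doule_star} only gives continuity of $b^{**}=\optFNR$ on the open interval $(0,1)$ (a convex function on $[0,1]$ may jump up at $\alpha=0$, and indeed $b^*$ is not constant near $0$ under the hypotheses of \prettyref{prop:uniform_consistency} since $a(u)\to0$ as $u\to0^+$), and \prettyref{prop:asymp_opt} gives only pointwise convergence of $\optFNR_n$; neither uniform convergence on $[0,1]$ nor continuity at the endpoint $\alpha=0$ is available in the paper, and both are needed for your chain $\sup_\alpha\pth{\optFNR(\alpha)-\optFNR_n(\alpha+\epsilon_n)}\to0$. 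These facts can be proved under the stated density assumptions, but doing so is extra work that your proposal does not supply. The fix is much simpler and is what the paper does: the same identity
\begin{align*}
\FNR(\delta^n)=\Expect\left[\hat p\,\frac{\sum_i(1-W_i)\indc{\widehat W_i>\widehat{y^*}(s_{k_0})}}{1\vee\sum_i\indc{\widehat W_i>\widehat{y^*}(s_{k_0})}}+(1-\hat p)\,\frac{\sum_i(1-W_i)\indc{\widehat W_i>\widehat{y^*}(s_{k_0+1})}}{1\vee\sum_i\indc{\widehat W_i>\widehat{y^*}(s_{k_0+1})}}\right]
\end{align*}
is within $\pm\,\omega_n$ of $\hat p\,\widehat B(\widehat{y^*}(s_{k_0}))+(1-\hat p)\widehat B(\widehat{y^*}(s_{k_0+1}))=\widehat{\optFNR}(\alpha)$ in \emph{both} directions, so $\FNR(\delta^n)\ge\Expect[\widehat{\optFNR}(\alpha)-\omega_n]\ge\optFNR(\alpha)-\Expect\bigl[\|\widehat{\optFNR}-\optFNR\|_\infty\bigr]-\Expect[\omega_n]$, and the two-sided bound follows from \prettyref{prop:uniform_consistency} alone, uniformly in $\alpha$, with no appeal to \prettyref{prop:asymp_opt}.
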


\section{Simulation studies}\label{sec:Simulation}
In this section, we carry out experiments based on the data generated from the Gaussian location model, \ie, $X_i \sim \calN(0,1)$ under the null and $\calN(\mu,1)$ for some $\mu>0$ under the alternative, to compare the performance of three data-driven procedures: The oracle BH procedure given in \eqref{eq:BH}, Sun and Cai's procedure given in \eqref{eq:Sun&Cai}, and \prettyref{algo:globally-randomized}. In these experiments, we assume that the null proportion $\pi_0$ is known and we convert the raw observations $X_i$ to their p-values $1-\Phi(X_i)$ so that they follow a uniform distribution under the null. By direct calculation, the density of $1-\Phi(X_i)$ under the alternative is monotone, so we can use the Grenander estimator \cite{birge1989grenader} to estimate the marginal density.\footnote{This was done by the function \texttt{grenander()} in the R package \texttt{fdrtool}.}
We choose $\pi_0=0.75,0.99$ and the sample size $n=1000$. We use Monte-Carlo method (10000 identical and independent trials) to approximate the FDR and FNR.

The top panels of \prettyref{fig:BH_vs_SC_vs_GR_normal} correspond to the case of $\pi_0=0.75$. For $\mu=1$ (left), we have $\optmFNR(\alpha) > \optFNR(\alpha)$ for any $\alpha\in(0,\pi_0)$ as shown in \prettyref{ex:gaussian}, so \prettyref{algo:globally-randomized} which is designed for optimal FDR control attains smaller FNR than the oracle BH and Sun\&Cai. 
For $\mu=2$ (right), $\optmFNR$ appears almost convex, so $\optFNR$ nearly coincides with $\optmFNR$. (Nevertheless, as shown in \prettyref{ex:gaussian}, $\optFNR(\alpha)$ is strictly smaller than $\optmFNR(\alpha)$ for small $\alpha$.)
In this case, all three procedures exhibit similar performance. The bottom panels of \prettyref{fig:BH_vs_SC_vs_GR_normal} display the case of $\pi_0=0.99$, \ie, the signals are very sparse. For $\mu=2$ (left), \prettyref{algo:globally-randomized} which approximates the optimal FDR-FNR tradeoff outperforms the oracle BH and Sun\&Cai. For $\mu=3$ (right), in which case $\optFNR$ nearly coincides with $\optmFNR$, \prettyref{algo:globally-randomized} performs worse than the others due to the density estimation errors for the given sample size.  

\begin{figure}[thpb]
	\centering
	\includegraphics[scale=0.17]{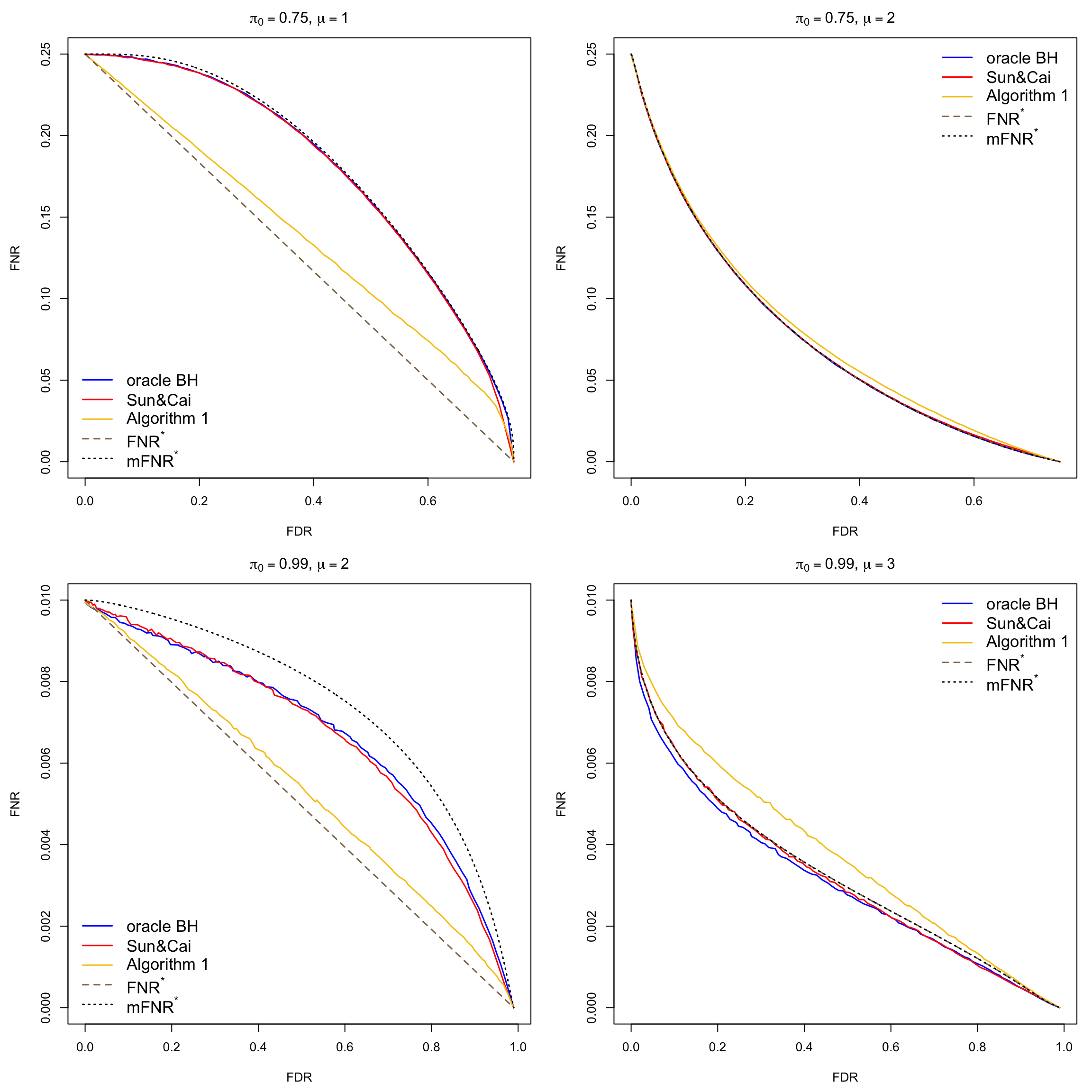}
	\caption{Comparisons between the oracle BH, Sun\&Cai and \prettyref{algo:globally-randomized} in the normal mean model, with $\pi_0=0.75$, $\mu=1,2$ (top) and $\pi_0=0.75$, $\mu=2,3$ (bottom).}
	\label{fig:BH_vs_SC_vs_GR_normal}
\end{figure}

Overall, both experiments verify the conclusions in \prettyref{sec:SuboptBH} that the oracle BH and Sun\&Cai achieve the optimal mFDR-mFNR tradeoff but not 
the FDR-FNR tradeoff. In particular, the sparser and rarer the signals are, these procedures are more likely to be suboptimal. For example, with the same $\mu=2$, the oracle BH and Sun\&Cai are optimal for $\pi_0=0.75$ while suboptimal for $\pi_0=0.99$; with the same $\pi_0=0.75$, they are optimal for $\mu=2$ but suboptimal for $\mu=1$. For a more comprehensive comparison, we provide in \prettyref{app:ComparisonsFixedLevel} a table of the estimated EFN, FNR, FDR, as well as the variance of FDP, for the oracle BH, Sun\&Cai, and \prettyref{algo:globally-randomized} for a fixed FDR level.

\section{Extensions}\label{sec:Extensions}
\subsection{Maximizing the expected number of true discoveries}\label{sec:EFN}

Our main results in \prettyref{sec:MainThm} focused on the optimal FDR-FNR tradeoff as well as their high-probability version, showing that separable rules are in general suboptimal for the former but optimal for the latter. In this section we extend these results to a different objective previously mentioned in 
\prettyref{sec:mFDR}, namely, minimizing the number of false non-discoveries divided by $n$ in expectation (EFN) and with high probability, whiling controlling the FDR. 
The following two theorems, extending Theorems \ref{thm:ora_FNR_vs_ora_mFNR} and 
\ref{thm:high_prob_formulation}, 
show that similar conclusions, both qualitative and quantitative, continue to apply to this different objective function.
 
\begin{theorem}\label{thm:FDR-ETD}
Recall that $\optEFNnorm_n(\alpha)$ defined in \prettyref{eq:opt_prob_FDR&EFN} denotes the the optimal EFN 
subject to that FDR is at most $\alpha$ in the two-group random mixture model with $n$ hypotheses.
The limit $\optEFNnorm(\alpha)\triangleq\lim\limits_{n\to\infty}\optEFNnorm_n(\alpha)$ exists, which, as a function of $\alpha$, is the GCM of the function $\optmEFNnorm$. 
\end{theorem}

\begin{theorem}\label{thm:high_prob_formulation_1}
Let $\FN(\delta^n)\triangleq\frac{1}{n}\sum_{i=1}^n\theta_i(1-\delta_i)$ be the number of false non-discoveries divided by $n$. In the two-group random mixture model, fix a level $\alpha$.
\begin{itemize}
	\item For any $\beta\geq \optmEFNnorm(\alpha)$, the procedure $\maroracle(\alpha)$ 
	in \prettyref{eq:deltaNP}  satisfies that for any $\epsilon > 0$,
\[
\Prob\pth{\FDP(\maroracle(\alpha)) \leq \alpha + \epsilon} = 1-o(1) \text{ and } \Prob(\FN(\maroracle(\alpha)) \leq \beta + \epsilon) = 1-o(1).
\]

	\item Conversely, suppose that a sequence of decision rules $\delta^n$ satisfies 
	$\Prob\pth{\FDP(\delta^n) \leq \alpha} = 1-o(1)$ as $n\to\infty$ and $\EFN(\delta^n) \leq \beta$. Then $\beta \geq \optmEFNnorm(\alpha)$. 
\end{itemize}
\end{theorem}

The proof of these results are similar to those of 
Theorems \ref{thm:ora_FNR_vs_ora_mFNR} and  \ref{thm:high_prob_formulation}. Here we only pointed out the main difference. Recall the definitions of $a$ and $b$ in \prettyref{eq:a(u)} and \prettyref{eq:b(u)}. Define $\tilde{b}(u)=(1-u)b(u)$ and
\begin{align}
\begin{split}
    \tilde{b}^*(\alpha) \triangleq  & \inf_{u\in[0,1]} \,\,\tilde{b}(u)=(1-u)b(u)\\
    & \,\,\,\,\,\text{s.t.}\quad a(u) \leq \alpha.
\end{split}
\label{eq:opt_prob_a&b'}    
\end{align}
Then, \prettyref{lmm:properties_of_b} holds verbatim for $\tilde{b}^*$ by applying the same arguments in \prettyref{app:ProofLmm-b^*}. Furthermore, we have the following lemma. (See the proof in \prettyref{app:OptRulemFDR}.)
\begin{lemma}\label{lmm:optimal_sol_of_mFDR&EFN}
For each $\alpha\in[0,1]$, we have $\optmEFNnorm(\alpha)=\tilde{b}^*(\alpha)$, and the optimal solution of \eqref{eq:opt_prob_mFDR&EFN}
is given by \prettyref{eq:optimal_sol_of_mFDR&mFNR}, the same as that of the problem \eqref{eq:opt_prob_mFDR&mFNR}. 
\end{lemma}

Next, we follow the same argument as the proofs of Theorems \ref{thm:ora_FNR_vs_ora_mFNR} and  \ref{thm:high_prob_formulation}. Akin to \prettyref{lmm:opt_prob_FDR&FNR}, the optimization problem \prettyref{eq:opt_prob_FDR&EFN} is equivalent to 
\begin{align}
\begin{split}
    \optEFNnorm_n(\alpha) =& \inf \,\,\,\Expect\left[\frac{1}{n}\sum_{i= K+1}^n(1-W_{(i)})\right]\\
    & \,\,\text{s.t.}\,\,\,\,\, \Expect\left[\frac{1}{K}\sum_{i=1}^K W_{(i)}\right] \leq \alpha,
\end{split}
\label{eq:opt_prob_FDR&EFN'}
\end{align}
where the infimum is over all Markov kernels $P_{K|X^n}$ from $\calX^n$ to $\{0,1\cdots,n\}$. Then, we relate \prettyref{eq:opt_prob_FDR&EFN'} 
to the following optimization problem
\begin{align}
\begin{split}
    \tilde{b}^{**}(\alpha) \triangleq  & \inf \,\,\,\Expect[\tilde{b}(U)]\\
    & \,\text{s.t.}\,\,\,\, \Expect[a(U)] \leq \alpha,
\end{split}
\label{eq:opt_prob_a&b'_random}
\end{align}
where $U$ is any random variable supported on $[0,1]$, by showing that $\lim\limits_{n\to\infty}\optEFNnorm_n(\alpha)=\tilde{b}^{**}(\alpha)$. The key step is to notice that $\frac{1}{n}\sum_{i= k+1}^n(1-W_{(i)})-\tilde{b}(\frac{k}{n})=(1-\frac{k}{n})(\frac{1}{n-k}\sum_{i= k+1}^n(1-W_{(i)})-b(\frac{k}{n}))$. Combined with \prettyref{cor:UnionBound}, for any $\tau\in(0,1)$ and for any (sequence of) Markov kernels $P_{K_n|X^n}$ from $\calX^n$ to $\{0,1,\cdots,n\}$, we have
\begin{align}
    \pth{\frac{1}{n}\sum_{i=K_n+1}^n (1-W_{(i)}) - \tilde{b}(K_n/n)}\cdot\indc{K_n\leq (1-\tau)n} \toprob  0\text{ as }n\to\infty.\label{eq:concentrate_b_tilde}
\end{align} 
The rest of proofs are provided in \prettyref{app:PoofsFDR-EFN}.

\subsection{Fixed non-null proportion}\label{sec:Fixed-group}
Our results so far are based on the two-group random mixture model in which the true labels of the hypotheses are independently drawn from $\Bern(\pi_1)$. 
This independence assumption is convenient for the technical analysis, in particular, the concentration result in \prettyref{lmm:approx_of_obj&constr}, to go through.
It is of interest to consider the model with a fixed number $n_1$ of non-nulls, which was originally considered in \cite{genovese2002operating}. We show in this section that the main results (\prettyref{thm:ora_FNR_vs_ora_mFNR} and \prettyref{thm:high_prob_formulation}) also hold for the \emph{fixed-proportion model}, 
in which the true labels $\theta^n$ are drawn uniformly at random from $\{\theta^n\in\{0,1\}^n:\sum_{i=1}^n\theta_i=n_1\}$, where $\frac{n_1}{n}\to\pi_1$ as $n\to\infty$, and given $\theta^n$, $X^n|\theta^n\sim\prod_{i=1}^nf_0^{1-\theta_i}f_1^{\theta_i}$. 

\begin{theorem}
\label{thm:ora_FNR_vs_ora_mFNR_fixed}
For the optimization problem \prettyref{eq:opt_prob_FDR&FNR} in the fixed-proportion model, the limit $\lim\limits_{n\to\infty}\optFNR_n(\alpha)$ exists and is given by $b^{**}(\alpha)$ defined in \prettyref{eq:opt_prob_a&b_random}, which is the GCM of the function $b^*$ defined in \prettyref{eq:opt_prob_a&b}. 
\end{theorem}

\begin{theorem}
\label{thm:high_prob_formulation_fixed}
In the fixed-proportion model, fix a level $\alpha$.
\begin{itemize}
	\item For any $\beta\geq b^*(\alpha)$, the procedure $\maroracle(\alpha)$ 
	in \prettyref{eq:deltaNP}  satisfies, for any $\epsilon > 0$,
\[
\Prob\pth{\FDP(\maroracle(\alpha)) \leq \alpha + \epsilon} = 1-o(1) \text{ and } \Prob\pth{\FNP(\maroracle(\alpha)) \leq \beta + \epsilon} = 1-o(1).
\]

	\item Conversely, suppose that a sequence of decision rules $\delta^n$ satisfies 
	$\Prob\pth{\FDP(\delta^n) \leq \alpha} = 1-o(1)$ as $n\to\infty$ and $\FNR(\delta^n) \leq \beta$. Then $\beta \geq b^*(\alpha)$. 
\end{itemize}
\end{theorem}

One of the major technical difficulties in proving these theorems is that in \prettyref{lmm:opt_prob_FDR&FNR} 
the local FDR $\Expect[1-\theta_i|X^n]$ now depends on the entire dataset $X^n$. Nevertheless, the following result shows that the same deterministic approximation as in 
\prettyref{cor:UnionBound} continues to hold. 
\begin{proposition}\label{prop:UnionBound_fixed}
In the fixed-proportion model, let $\widetilde{W}_i\triangleq\Expect[1-\theta_i|X^n]$ and $\widetilde{W}_{(1)},\cdots,\widetilde{W}_{(n)}$ be the order statistics. For any $\tau\in(0,1)$ and for any (sequence of) Markov kernels $P_{K_n|X^n}$ from $\calX^n$ to $\{0,1,\cdots,n\}$, we have
\begin{align}
    &\pth{\frac{1}{K_n}\sum_{i=1}^{K_n} \widetilde{W}_{(i)} - a(K_n/n)}\cdot\indc{K_n\geq \tau n} \toprob  0\text{, and}\label{eq:converge1}\\
    &\pth{\frac{1}{n-K_n}\sum_{i=K_n+1}^n (1-\widetilde{W}_{(i)}) - b(K_n/n)}\cdot\indc{K_n\leq (1-\tau)n} \toprob  0\text{ as }n\to\infty.\label{eq:converge2}
\end{align}
\end{proposition}
\begin{proof}
    We use the same notation as before in \prettyref{eq:Lfdri}. Let $W_i=\frac{\pi_0f_0(X_i)}{\pi_0f_0(X_i)+\pi_1f_1(X_i)}$ and $W_i'=\Expect[1-\theta_i|X_i]=\frac{\frac{n_0}{n}f_0(X_i)}{\frac{n_0}{n}f_0(X_i)+\frac{n_1}{n}f_1(X_i)}$, where $n_0=n-n_1$. We first show that for any $\tau\in(0,1)$
    \begin{align}
        \sup_{k\geq\tau n}\bigg|\frac{1}{k}\sum_{i=1}^kW_{(i)}-\frac{1}{k}\sum_{i=1}^k W_{(i)}'\bigg|\leq\frac{1}{\tau}\bigg|1-\frac{n_0\pi_1}{n_1\pi_0}\bigg|=o(1),\label{eq:tail_bound3}
    \end{align}
    where $W_{(i)}$, $W_{(i)}'$ are the order statistics of $W_i$ and $W_i'$ respectively. We start with an elementary lemma. (For a proof see \prettyref{app:ProofBestMatching}.)
    \begin{lemma}\label{lmm:best_matching}
        Suppose $a_{(1)}\leq a_{(2)}\cdots\leq a_{(n)}$ and $b_{(1)}\leq b_{(2)}\cdots\leq b_{(n)}$ are the sorted versions of two sequences $a_1,\cdots,a_n$ and $b_1,\cdots,b_n$. Then, $\sup_{0\leq k\leq n}|\sum_{i=1}^ka_{(i)}-\sum_{i=1}^kb_{(i)}|\leq\sum_{i=1}^n|a_i-b_i|$.
    \end{lemma}
    Direct calculation shows that $|W_i-W_i'|=W_i(1-W_i')\big|1-\frac{n_0\pi_1}{n_1\pi_0}\big|\leq \big|1-\frac{n_0\pi_1}{n_1\pi_0}\big|$ for any $i$. Then, combined with \prettyref{lmm:best_matching}, $\sup_{k\geq\tau n}\bigg|\frac{1}{k}\sum_{i=1}^kW_{(i)}-\frac{1}{k}\sum_{i=1}^k W_{(i)}'\bigg|\leq\frac{1}{\tau}\frac{1}{n}\sum_{i=1}^n|W_i-W_i'|\leq\frac{1}{\tau}\big|1-\frac{n_0\pi_1}{n_1\pi_0}\big|=o(1)$.
    
    Next, we show that the partial average $\frac{1}{k}\sum_{i=1}^k\widetilde{W}_{(i)}$ is uniformly close to $\frac{1}{k}\sum_{i=1}^kW_{(i)}'$ in the sense that for any $\epsilon>0$ and $\tau\in(0,1)$,
    \begin{align}\label{eq:tail_bound2}
        \Prob\pth{\sup_{k\geq\tau n}\bigg|\frac{1}{k}\sum_{i=1}^kW_{(i)}'-\frac{1}{k}\sum_{i=1}^k\widetilde{W}_{(i)}\bigg|>\epsilon}\leq\frac{\log (n+1)}{\tau^2\epsilon^2n}.
    \end{align}
		Note that since the number of non-nulls are fixed, $\theta^n$ no longer has iid coordinates and hence 
		$\widetilde{W}_i=\Expect[1-\theta_i|X^n]$ depends on the whole sample $X^n$. Intuitively, given the direct observation $X_i$ 
		the dependency between $\theta_i$ and the other indirect observations $\{X_j: j \neq i\}$ is rather weak and hence 
		we anticipate $\widetilde{W}_i$ to be close to $W_i'=\Expect[1-\theta_i|X_i]$. The following lemma quantifies this intuition and may be of independent interest. 
		The proof is based on an information-theoretic argument and given in \prettyref{app:ProofWW_tilde}.
    \begin{lemma}\label{lmm:W&W_tilde}
    In the fixed-proportion model, we have $\max\limits_{1\leq i\leq n}\Expect[(W_i'-\widetilde{W}_i)^2]\leq\frac{\log (n+1)}{n}$, for any finite $n$.
    \end{lemma}
    By \prettyref{lmm:best_matching}, we have
    \begin{align*}
        \sup_{k\geq\tau n}\bigg|\frac{1}{k}\sum_{i=1}^kW_{(i)}'-\frac{1}{k}\sum_{i=1}^k\widetilde{W}_{(i)}\bigg|\leq\frac{1}{\tau}\frac{1}{n}\sum_{i=1}^n|W_i'-\widetilde{W}_i|\leq\frac{1}{\tau}\sqrt{\frac{1}{n}\sum_{i=1}^n(W_i'-\widetilde{W}_i)^2}.
    \end{align*}
By Markov's inequality and \prettyref{lmm:W&W_tilde}, for any $\epsilon>0$,
\begin{align*}
    \Prob\pth{\sup_{k\geq\tau n}\bigg|\frac{1}{k}\sum_{i=1}^kW_{(i)}'-\frac{1}{k}\sum_{i=1}^k\widetilde{W}_{(i)}\bigg|>\epsilon}\leq\Prob\pth{\frac{1}{n}\sum_{i=1}^n(W_i'-\widetilde{W}_i)^2>\tau^2\epsilon^2}\leq\frac{\log (n+1)}{\tau^2\epsilon^2n}.
\end{align*}

Last, we show that for any fixed $\tau\in(0,1)$ and $\epsilon>0$, the following inequality holds for sufficiently large $n$:
\begin{align}\label{eq:tail_bound1}
    \Prob\pth{\sup_{k\geq\tau n}\Big|\frac{1}{k}\sum_{i=1}^k W_{(i)} - a(k/n)\Big|>\epsilon}\leq12\sqrt{n}\exp(-n\tau^2\epsilon^2/4).
\end{align}
Consider the following process, which is an alternative way to generate the two-group random mixture model. First, sample $N_1\sim\Binom(n,\pi_1)$. Then, conditioning on $N_1$, sample true labels $\theta^n$ uniformly at random from $\{\theta^n\in\{0,1\}^n:\sum_{i=1}^n\theta_i=N_1\}$. Finally, conditioning on $\theta^n$, sample $X^n|\theta^n\sim\prod_{i=1}^nf_0^{1-\theta_i}f_1^{\theta_i}$. Denote the event $\{\sup_{k\geq\tau n}\big|\frac{1}{k}\sum_{i=1}^k W_{(i)} - a(\frac{k}{n})\big|>\epsilon\}$ by $E$. We proved in \prettyref{lmm:approx_of_obj&constr} that $\Prob\pth{E} \leq  6 \exp(-n\tau^2\epsilon^2/2)$ in the random mixture model. Then, $\Prob(E)=\sum_{i=0}^n\Prob(N_1=i)\Prob(E|N_1=i)\geq\Prob(N_1=n_1)\Prob(E|N_1=n_1)=\binom{n}{n_1}\pi_1^{n_1}\pi_0^{n_0}\Prob(E|N_1=n_1)$. By the 
non-asymptotic Stirling's approximation \cite{robbins1955remark},
 we have $\binom{n}{n_1}\pi_1^{n_1}\pi_0^{n_0}\geq\frac{1}{2\sqrt{n}}(\frac{n}{n_1})^{n_1}(\frac{n}{n_0})^{n_0}\cdot\pi_1^{n_1}\pi_0^{n_0}=\frac{1}{2\sqrt{n}}(\frac{n\pi_1}{n_1})^{n_1}(\frac{n\pi_0}{n_0})^{n_0}=\frac{1}{2\sqrt{n}}\pth{(\frac{n\pi_1}{n_1})^{\frac{n_1}{n}}(\frac{n\pi_0}{n_0})^{\frac{n_0}{n}}}^n$. Note that $(\frac{n\pi_1}{n_1})^{\frac{n_1}{n}}(\frac{n\pi_0}{n_0})^{\frac{n_0}{n}}=1+o(1)$ as $n\to\infty$, then $\pth{(\frac{n\pi_1}{n_1})^{\frac{n_1}{n}}(\frac{n\pi_0}{n_0})^{\frac{n_0}{n}}}^n=e^{n\cdot o(1)}\geq e^{-n\tau^2\epsilon^2/4}$ for sufficiently large $n$. Hence, $\binom{n}{n_1}\pi_1^{n_1}\pi_0^{n_0}\geq\frac{1}{2\sqrt{n}}e^{-n\tau^2\epsilon^2/4}$ for sufficiently large $n$. It follows that $\Prob(E|N_1=n_1)\leq12\sqrt{n}\exp(-n\tau^2\epsilon^2/4)$. Namely, \prettyref{eq:tail_bound1} holds for the fixed-proportion model. Combined with \prettyref{eq:tail_bound3} and \prettyref{eq:tail_bound2}, for sufficiently large $n$, we have $$\Prob\pth{\sup_{k\geq\tau n}\Big|\frac{1}{k}\sum_{i=1}^k \widetilde{W}_{(i)} - a(k/n)\Big|>3\epsilon}\leq12\sqrt{n}\exp(-n\tau^2\epsilon^2/4)+\frac{\log (n+1)}{\tau^2\epsilon^2n},$$
and \prettyref{eq:converge1} follows directly. The proof of \prettyref{eq:converge2} is similar.
\end{proof}

With \prettyref{prop:UnionBound_fixed}, the proofs of Theorems \ref{thm:ora_FNR_vs_ora_mFNR_fixed} and \ref{thm:high_prob_formulation_fixed} follow from the same program as those of Theorems \ref{thm:ora_FNR_vs_ora_mFNR} and \ref{thm:high_prob_formulation}. 
\begin{proof}[proof of \prettyref{thm:ora_FNR_vs_ora_mFNR_fixed}]
    First, we show $\limsup_{n\to\infty}\optFNR_n(\alpha)\leq b^{**}(\alpha)$. For any fixed $u\in[0,1]$, consider the separable rule $\delta^{n,u}=(\delta_1,\cdots,\delta_n)$ of the form
    \begin{align*}
    \delta_i= 
    \begin{cases}
        1 & \text{if } W_i < G^{-1}(u)\\
        \Bern(p(u)) & \text{if } W_i = G^{-1}(u)\\
        0 & \text{if } W_i > G^{-1}(u)
    \end{cases}
    \end{align*}
    where $W_i=\pi_0f_0(X_i)/(\pi_0f_0(X_i)+\pi_1f_1(X_i))$ and $p(\cdot)$ is defined in \eqref{eq:p(u)}. Then, $\FDR(\delta^{n,u})=\Expect\left[\frac{\sum_{i=1}^n(1-\theta_i)\delta_i}{1\vee\sum_{i=1}^n\delta_i}\right]=\Expect\left[\Expect\left[\frac{\sum_{i=1}^n(1-\theta_i)\delta_i}{1\vee\sum_{i=1}^n\delta_i}\big|\theta^n\right]\right]$. By exchangeability, $\Expect\left[\frac{\sum_{i=1}^n(1-\theta_i)\delta_i}{1\vee\sum_{i=1}^n\delta_i}\big|\theta^n\right]$ is determined by the number of 1s in $\theta^n$ but does not depend on which coordinates are equal to 1, so we can assume $\theta_i=0$ for $i=1,\cdots,n_0$ and $\theta_i=1$ for $i=n_0+1,\cdots,n$, where $n_0=n-n_1$. Then, by the Law of Large Numbers,
    \begin{align}
        &\frac{1}{n}\sum_{i=1}^n(1-\theta_i)\delta_i=\frac{n_0}{n}\frac{1}{n_0}\sum_{i=1}^{n_0}\delta_i~\toprob~\pi_0\Expect_0[\delta_1]=\Expect\left[\pi_0f_0(X_1)/f(X_1)\cdot\delta_1\right]=\Expect[W_1\delta_1],\label{eq:numerator}\\
        &\frac{1}{n}\sum_{i=1}^n\delta_i=\frac{n_0}{n}\frac{1}{n_0}\sum_{i\leq n_0}\delta_i+\frac{n_1}{n}\frac{1}{n_1}\sum_{i>n_0}\delta_i~\toprob~\pi_0\Expect_0[\delta_1]+\pi_1\Expect_1[\delta_1]=\Expect[\delta_1].\label{eq:denominator}
    \end{align}
    Then, $\FDR(\delta^{n,u})\to\frac{\Expect[W_1\delta_1]}{\Expect[\delta_1]}=\Expect[W_1|\delta_1=1]=a(u)$. Similarly, $\FNR(\delta^{n,u})\to b(u)$. For any $\epsilon>0$, suppose $U^*$ is the optimal solution to \prettyref{eq:opt_prob_a&b_random} with level $\alpha-\epsilon$, \ie $\Expect[a(U^*)]\leq\alpha-\epsilon$ and $\Expect[b(U^*)]=b^{**}(\alpha-\epsilon)$. Consider the randomized rule $\delta^{n,U^*}$. By the bounded convergence theorem, $\FDR(\delta^{n,U^*})\to \Expect[a(U^*)]\leq\alpha-\epsilon$ and $\FNR(\delta^{n,U^*})\to \Expect[b(U^*)]=b^{**}(\alpha-\epsilon)$. Thus, there exists $N$ such that $\FDR(\delta^{n,U^*})\leq\alpha$ and $\FNR(\delta^{n,U^*})\leq b^{**}(\alpha-\epsilon)+\epsilon$ for any $n>N$. It follows that $\optFNR_n(\alpha)\leq b^{**}(\alpha-\epsilon)+\epsilon$ for $n>N$. By first letting $n\to\infty$ and then $\epsilon\to 0$ and noting that $b^{**}$ is continuous on $(0,1)$, we have $\limsup_{n\to\infty}\optFNR_n(\alpha)\leq b^{**}(\alpha)$.

    Next, we show $\liminf_{n\to\infty}\optFNR_n(\alpha)\geq b^{**}(\alpha)$. Note that \prettyref{lmm:opt_prob_FDR&FNR} also apply to the fixed-proportion model if we replace $W_i$ with $\widetilde{W}_i=\Expect[\theta_i|X^n]$. Then, thanks to \prettyref{prop:UnionBound_fixed}, we can entirely adopt the proof of the negative part of \prettyref{prop:asymp_opt} by replacing all $W_i$ with $\widetilde{W}_i$. The proof of \prettyref{thm:ora_FNR_vs_ora_mFNR_fixed} is completed.
\end{proof}

\begin{proof}[proof of \prettyref{thm:high_prob_formulation_fixed}]
    The proof for the negative part is the same as that in \prettyref{sec:ProofThm2} by replacing all $W_i$ with $\widetilde{W}_i$ and applying \prettyref{prop:UnionBound_fixed}.
    For the positive part, fix $\theta^n$. By \prettyref{eq:numerator} and \prettyref{eq:denominator}, we have $\FDP(\NP^n(\alpha))\toprob a(u^*(\alpha))\leq\alpha$ and $\FNP(\NP^n(\alpha))\toprob b(u^*(\alpha))=b^*(\alpha)$, where $u^*(\alpha)$ is the optimal solution of \prettyref{eq:opt_prob_a&b}. For any $\epsilon>0$, by exchangeability, $\Prob\pth{\FDP(\NP^n)\leq\alpha+\epsilon\,|\,\theta^n}$ and $\Prob\pth{\FNP(\NP^n)\leq\beta+\epsilon\,|\,\theta^n}$ remain the same for all $\theta^n$ such that $\sum_{i=1}^n\theta_i=n_1$. Therefore, for any $\beta\geq b^*(\alpha)$,
    \begin{align*}
    \Prob\pth{\FDP(\NP^n)\leq\alpha+\epsilon}=\Expect[\Prob\pth{\FDP(\NP^n)\leq\alpha+\epsilon\,|\,\theta^n}]=1-o(1),\\
    \Prob\pth{\FNP(\NP^n)\leq\beta+\epsilon}=\Expect[\Prob\pth{\FNP(\NP^n)\leq\beta+\epsilon\,|\,\theta^n}]=1-o(1).
    \end{align*}
    The proof is completed.
\end{proof}

\subsection{Weakly dependent data}\label{sec:dependent}

Recently there has been progress in understanding the fundamental limits of multiple testing with dependent data, either by allowing the labels to be dependent or the observations to be dependent conditioned on the labels \cite{sun2009large,xie2011optimal,heller2021optimal}. 
The preceding \prettyref{thm:high_prob_formulation_fixed} can be viewed as an extension of our main results to the case of weakly dependent labels. Next we provide another extension for weakly dependent data. 

Following \cite{xie2011optimal,heller2021optimal,rosset2022optimal}, we consider the two-group normal mean model with correlated observations. Let $X^n \sim N(\mu \cdot \theta^n, \Sigma_n)$, where $\mu>0$, $\theta^n \iiddistr \Bern(\pi_1)$, and 
$\Sigma_n$ is an $n\times n$ covariance matrix with diagonals all equal to $\sigma^2$.
(Note that \prettyref{ex:gaussian} corresponds to $\Sigma_n = \sigma^2 I_n$.) 
The following result, proved in \prettyref{app:dependent}, shows that the optimal FDR-FNR tradeoff in \prettyref{thm:ora_FNR_vs_ora_mFNR} and \prettyref{thm:high_prob_formulation} remains the same under a weak dependency assumption. 
Characterizing the optimal FDR-FNR tradeoff for models with strongly dependent data is
an open problem; see the next section for discussion.
\begin{theorem}
\label{thm:dependent}
Under the preceding two-group normal mean model, \prettyref{thm:ora_FNR_vs_ora_mFNR} and \prettyref{thm:high_prob_formulation} continue to hold provided that 
	$\|\Sigma_n-\sigma^2  I_n\|_{\rm sp} \to 0$ in the spectral norm.	
\end{theorem}

\section{Conclusions and Discussions}\label{sec:Discussion}

In this paper, we resolve the fundamental limit of multiple testing in the two-group random mixture model by characterizing the minimum achievable FNR subject to a FDR constraint in the large-$n$ limit. In contrast to the optimal mFDR control (that minimizes the mFNR), which is separable and consists of $n$ independent and identical Neyman-Pearson tests, 
the optimal FDR control is in general not separable even in the large-$n$ limit and can be achieved by randomizing between two sets of mFDR controls. 
This phenomenon brings into light the peril of controlling only the FDR, which fails to control the variability of the FDP. To remedy this limitation, we also showed that controlling the FDP with high probability results in the separable rules being optimal. Finally, we extended our results to a different objective of maximizing the expected number of true discoveries and also to models with fixed (as opposed to on average) non-null proportion. We proved that the same conclusion also applies to these extensions.

	We have focused on models where the observations are independent conditioned on the labels and the labels are either iid Bernoulli or uniform at random with a fixed non-null proportion, with the exception of 
	\prettyref{thm:ora_FNR_vs_ora_mFNR_fixed} and \ref{thm:dependent} that allow some weak dependence in the labels and the observations, respectively. 
	It is of interest to extend the results to models with stronger dependency. 
	
	For a simple example, suppose that the true labels $\theta_1,\cdots,\theta_n$ form a stationary and $m$-dependent sequence \cite{Hoeffding65}, in the sense that $(\theta_i)_{i\in\calI} \indep (\theta_j)_{j\in\calJ}$ if $\min\{j:j\in\calJ\}-\max\{i:i\in\calI\}>m$ for any subset $\calI,\calJ\subset[n]$. Our main results can be shown to hold in this model with suitable modification of the definitions. To see this, note that the Lfdr sequence $W_i = \Expect[1-\theta_i|X^n] = \Expect[1-\theta_i|X_{i-m},\cdots,X_{i+m}]$ is stationary and $3m$-dependent. Similar to the independent model, one can show that $\frac{1}{\lceil\lambda n\rceil}\sum_{i=1}^{\lceil\lambda n\rceil} W_{(i)}$ concentrates for any $\lambda\in(0,1)$, by similar arguments as in \prettyref{app:ProofMainLmm}. Therefore, 
	$\lim_{n\to\infty}\optFNR_n(\alpha)=b^{**}(\alpha)$ continues to hold provided that $\Lfdr(X_i)$ is replaced by $\Expect[1-\theta_i|X_{i-m},\cdots,X_{i+m}]$ in the definition of $b^{**}(\alpha)$. 
	
	More sophisticated models have been considered in the literature of multiple testing, for example, the hidden Markov model (HMM) in which the true labels $\theta_1,\cdots,\theta_n$ follows a stationary Markov chain 
and the observations $X_i$'s are independent conditioned on $\theta_i$'s	\cite{sun2009large,abraham2022multiple}. It was shown in \cite{sun2009large} that the optimal procedure to control the mFDR is given by thresholding $\Lfdr(X_i)$ with a fixed cutoff; however, the optimal FDR control is unknown.
	Note that a common proof ingredient for both Theorems \ref{thm:high_prob_formulation_fixed} and \ref{thm:dependent} is to approximate the full likelihood ratio $\Expect[\theta_i|X^n]$ by the marginal one $\Expect[\theta_i|X_i]$. For HMM, we do not expect such approximation to hold. It is thus unclear whether optimal FDR-FNR frontier is still given by the greatest convex minorant of the mFDR-mFNR tradeoff.

\newpage
\appendix

\section{Example of Gaussian mixture model}\label{app:GaussianModel}

Under the two-group model \eqref{eq:two-group_model}, suppose $X_i \sim \calN(0,1)$ if $\theta_i=0$, and $X_i \sim \calN(\mu,1)$ if $\theta_i=1$, for $i=1,\cdots,n$, where $\mu>0$.
Denoting by $\Phi$ the standard normal CDF, direct calculation shows that the optimal tradeoff curve $(\alpha,\optmFNR(\alpha))$, $\alpha\in(0,\pi_0)$ has the following parametric form
\begin{align*}
\begin{cases}
    x &=\frac{\pi_0\pth{1-\Phi(z)}}{1-\pth{\pi_0\Phi(z)+\pi_1\Phi(z-\mu)}}\\
    y &=\frac{\pi_1\Phi(z-\mu)}{\pi_0\Phi(z)+\pi_1\Phi(z-\mu)}
\end{cases}
,\quad z\in(-\infty,+\infty)
\end{align*}
and the optimal tradeoff curve $(\alpha,\optmEFNnorm(\alpha))$ is given by
\begin{align*}
\begin{cases}
    x &=\frac{\pi_0\pth{1-\Phi(z)}}{1-\pth{\pi_0\Phi(z)+\pi_1\Phi(z-\mu)}}\\
    y &=\pi_1\Phi(z-\mu)
\end{cases}
,\quad z\in(-\infty,+\infty).
\end{align*}

\begin{suppprop}
Assume the Gaussian mixture model described above.
\begin{enumerate}
    \item[(a)] There exists $\mu_0>0$ such that for any $\mu\in(0,\mu_0)$, it holds that $\optmFNR(\alpha)>\pi_1\pth{1 - \frac{\alpha}{\pi_0}}$ for any $\alpha\in(0,\pi_0)$ and $\pi_0\in(0,1)$. 
    
    \item[(b)] For any $\mu>0$ and $\pi_0\in(0,1)$, there exists $\alpha_0=\alpha_0(\mu,\pi_0)\in(0,\pi_0)$ such that $\optmFNR(\alpha)>\pi_1\pth{1 - \frac{\alpha}{\pi_0}}$ for any $\alpha\in(0,\alpha_0)$.

    \item[(c)] For any $\mu>0$ and $\pi_0\in(0,1)$, there exists $\alpha_1=\alpha_1(\mu,\pi_0)\in(0,\pi_0)$ such that $\optmEFNnorm(\alpha)>\pi_1\pth{1 - \frac{\alpha}{\pi_0}}$ for any $\alpha\in(0,\alpha_1)$.

    \item[(d)] For any $\mu>0$ and $\pi_0\in(0,1)$, there exists $\alpha_2=\alpha_2(\mu,\pi_0)\in(0,\pi_0)$ such that $\optmEFNnorm(\alpha)<\pi_1\pth{1 - \frac{\alpha}{\pi_0}}$ for any $\alpha\in(\alpha_2,1)$.
\end{enumerate}
\end{suppprop}

\begin{proof}
We first prove (a) and (b). Since 
\begin{align*}
    &\qquad\,\,\,\optmFNR(\alpha)>\pi_1\pth{1 - \frac{\alpha}{\pi_0}}\\ &\Longleftrightarrow \frac{\pi_1\Phi(z-\mu)}{\pi_0\Phi(z)+\pi_1\Phi(z-\mu)} > \pi_1 - \frac{\pi_1}{\pi_0}\frac{\pi_0\pth{1-\Phi(z)}}{1-\pth{\pi_0\Phi(z)+\pi_1\Phi(z-\mu)}}\\
    &\Longleftrightarrow \frac{\Phi(z-\mu)}{\pi_0\Phi(z)+\pi_1\Phi(z-\mu)} + \frac{1-\Phi(z)}{1-\pth{\pi_0\Phi(z)+\pi_1\Phi(z-\mu)}} > 1\\
    &\Longleftrightarrow \Phi(z-\mu) > \pi_0\pi_1\pth{\Phi(z)-\Phi(z-\mu)}^2+\Phi(z)\Phi(z-\mu),
\end{align*}
it suffices to consider the case that $\pi_0=\pi_1=\frac{1}{2}$ and show $\Phi(z-\mu) > \frac{1}{4}\pth{\Phi(z)+\Phi(z-\mu)}^2$. Define $h_\mu(z)=\Phi(z-\mu)-\frac{1}{2}\pth{\Phi(z)^2+\Phi(z-\mu)^2}$. 

For (a), since $\phi(2\mu)-\mu\Phi(2\mu)$ is continuous in $\mu$ and it is positive when $\mu=0$ and is negative when $\mu\to+\infty$, there exists $\mu_1>0$ such that $\phi(2\mu_1)-\mu_1\Phi(2\mu_1)=0$. Since $\frac{1}{2}e^{\mu^2}-\Phi(2\mu)$ is continuous in $\mu$ and it is negative when $\mu=1/2$ and positive when $\mu\to+\infty$, there exists $\mu_2>0$ such that $\frac{1}{2}e^{\mu_2^2}-\Phi(2\mu_2)=0$. Let $\mu_0=\min(\mu_1,\mu_2)$ and we show $h_\mu(z)>0$ for any $\mu\in(0,\mu_0)$ and $z\in\reals$. By calculation, $h'_\mu(z) = \phi(z-\mu)\pth{1-\Phi(z-\mu)-\exp\pth{\frac{1}{2}\mu^2-\mu z}\Phi(z)}$, where $\phi$ is the pdf of $\calN(0,1)$. Let $g_\mu(z)=1-\Phi(z-\mu)-\exp\pth{\frac{1}{2}\mu^2-\mu z}\Phi(z)$, so we have $g'_\mu(z)=-\pth{\phi(z-\mu)+e^{\frac{1}{2}\mu^2-\mu z}\pth{\phi(z)-\mu\Phi(z)}}$. If $z\leq 2\mu$, since $\frac{d}{dz}\pth{\phi(z)-\mu\Phi(z)} = -\phi(z)(\mu+z)$, we have $$\phi(z)-\mu\Phi(z)\geq \min\pth{\lim\limits_{z\to-\infty}\pth{\phi(z)-\mu\Phi(z)},\phi(2\mu)-\mu\Phi(2\mu)} = \min\pth{0,\phi(2\mu)-\mu\Phi(2\mu)}.$$ Note that $\phi(2\mu)-\mu\Phi(2\mu)$ is decreasing on $\mu\in(0,\mu_0)$, so $\phi(2\mu)-\mu\Phi(2\mu)\geq \phi(2\mu_0)-\mu_0\cdot\Phi(2\mu_0)\geq 0$. Hence, we have $g'_\mu(z)<0$, for $z\leq 2\mu$, \ie $g_\mu(z)$ is strictly decreasing on $(-\infty,2\mu)$. Note that $g_\mu(\mu/2)=1-\Phi(-\mu/2)-\Phi(\mu/2)=0$, so $g_\mu(z)>0$ if $z<\mu/2$, and $g_\mu(z)<0$ if $z\in(\mu/2,2\mu)$. If $z>2\mu$, we have 
\begin{align*}
    g_\mu(z)&=1-\Phi(z-\mu)-\exp\pth{\frac{1}{2}\mu^2-\mu z}\Phi(z)\\
    &\leq \frac{1}{2}\exp\pth{-\frac{1}{2}(z-\mu)^2}-\exp\pth{\frac{1}{2}\mu^2-\mu z}\Phi(z)\\
    &=\exp\pth{\frac{1}{2}\mu^2-\mu z}\pth{\frac{1}{2}\exp\pth{-\frac{1}{2}(z-2\mu)^2+\mu^2}-\Phi(z)}
\end{align*}
where the first inequality is by $1-\Phi(x)\leq\frac{1}{2}e^{-x^2/2}$, $\forall\,x\geq 0$. Since $\frac{1}{2}\exp\pth{-\frac{1}{2}(z-2\mu)^2+\mu^2}-\Phi(z)$ is decreasing on $z\in(2\mu,+\infty)$, we have $\frac{1}{2}\exp\pth{-\frac{1}{2}(z-2\mu)^2+\mu^2}-\Phi(z)\leq \frac{1}{2}e^{\mu^2}-\Phi(2\mu)$. Since $\frac{d}{d\mu}\pth{\frac{1}{2}e^{\mu^2}-\Phi(2\mu)}=e^{-2\mu^2}\pth{\mu e^{3\mu^2}-\frac{2}{\sqrt{2\pi}}}$, for $0<\mu<\mu_0$, we have $\frac{1}{2}e^{\mu^2}-\Phi(2\mu)<\max\pth{0,\frac{1}{2}e^{\mu_0^2}-\Phi(2\mu_0)}=0$, so $g_\mu(z)<0$ if $z>2\mu$. Noting that $h'_\mu(z)=\phi(z-\mu)g_\mu(z)$, we have $h_\mu(z)$ is strictly increasing on $(-\infty,\mu/2)$ and strictly decreasing on $(\mu/2,+\infty)$. Thus, $h_\mu(z)>\min\pth{\lim\limits_{z\to-\infty}h_\mu(z),\lim\limits_{z\to+\infty}h_\mu(z)}=0$. We complete the proof for part (a).

Next, we prove part (b). Note that
\[
\frac{d}{dz}\pth{\frac{\pi_0\pth{1-\Phi(z)}}{1-\pth{\pi_0\Phi(z)+\pi_1\Phi(z-\mu)}}}=\frac{\pi_0\pi_1\pth{\phi(z-\mu)(1-\Phi(z))-\phi(z)(1-\Phi(z-\mu))}}{\pth{1-\pth{\pi_0\Phi(z)+\pi_1\Phi(z-\mu)}}^2}
\]
and
\begin{align*}
    &\quad\,\phi(z)(1-\Phi(z-\mu))=\phi(z)\int_{z-\mu}^{+\infty}\phi(s)ds = \phi(z)\int_z^{+\infty}\phi(s-\mu)ds\\
    &= \phi(z)\int_z^{+\infty}e^{-\frac{1}{2}\mu^2+\mu s}\phi(s)ds \geq \phi(z)e^{-\frac{1}{2}\mu^2+\mu z}\int_z^{+\infty}\phi(s)ds = \phi(z-\mu)(1-\Phi(z)).
\end{align*}
Thus $\frac{\pi_0\pth{1-\Phi(z)}}{1-\pth{\pi_0\Phi(z)+\pi_1\Phi(z-\mu)}}$ is decreasing in $z$. Hence, it suffices to show that for any $\mu>0$, there exists $z_0(\mu)$ such that $h_\mu(z)>0$ for any $z>z_0(\mu)$. Since $g_\mu(z)\exp\pth{\mu z-\frac{1}{2}\mu^2}=(1-\Phi(z-\mu))\exp\pth{\mu z-\frac{1}{2}\mu^2}-\Phi(z)\to-1$ as $z\to\infty$, there exists $z_0(\mu)$ such that $g_\mu(z)<0$ for any $z>z_0(\mu)$. Hence, $h'_\mu(z)=\phi(z-\mu)g_\mu(z)<0$ when $z>z_0(\mu)$. It follows that $h_\mu(z)>\lim\limits_{z\to\infty}h_\mu(z)=0$ for any $z>z_0(\mu)$. The proof of (b) is completed.

For (c), by direct calculation, 
\begin{align*}
    &\qquad\,\,\optmEFNnorm(\alpha)>\pi_1\pth{1 - \frac{\alpha}{\pi_0}} \\
    &\Longleftrightarrow 2\Phi(z-\mu)-\Phi(z)-\Phi(z-\mu)^2+\pi_0(\Phi(z)-\Phi(z-\mu))(1-\Phi(z-\mu))>0,
\end{align*}
so it suffices to show that $f_\mu(z)\triangleq2\Phi(z-\mu)-\Phi(z)-\Phi(z-\mu)^2>0$ for sufficiently large $z$. Taking the derivative, $f_\mu'(z) = 2\phi(z-\mu)\pth{1-\frac{1}{2}\exp(-\mu z+\mu^2/2)-\Phi(z-\mu)}$. Let $l_\mu(z)=1-\frac{1}{2}\exp(-\mu z+\mu^2/2)-\Phi(z-\mu)$, then $l_\mu'(z)=\frac{\mu}{2}\exp(-\mu z+\mu^2/2)-\phi(z-\mu)>0$ for sufficiently large $z$. Hence, $l_\mu(z)<l_\mu(\infty)=0$ for large $z$, and thus $f_\mu(z)$ is decreasing when $z$ is large. Note that $f_\mu(\infty)=0$, so we have $f_\mu(z)=2\Phi(z-\mu)-\Phi(z)-\Phi(z-\mu)^2>0$ for large $z$, or equivalently, $\optmEFNnorm(\alpha)>\pi_1(1 - \frac{\alpha}{\pi_0})$ for small $\alpha$. 

Finally, we prove (d). Direct calculation shows that  
\begin{align*}
    &\qquad\,\,\optmEFNnorm(\alpha)<\pi_1\pth{1 - \frac{\alpha}{\pi_0}} \\
    &\Longleftrightarrow \Phi(z-\mu)-\pi_0\Phi(z)\Phi(z-\mu)-\pi_1\Phi(z-\mu)^2-\pi_1\Phi(z)+\pi_1\Phi(z-\mu)<0.
\end{align*}
Hence, it suffices to show that $f_{\mu,\pi_1}(z)\triangleq\Phi(z-\mu)-\pi_1\Phi(z)+\pi_1\Phi(z-\mu)<0$ for sufficiently small $z$.
Note that $f_{\mu,\pi_1}'(z)=\phi(z-\mu)\pth{1+\pi_1-\pi_1\exp(-\mu z+\frac{1}{2}\mu^2)}<0$ for sufficiently small $z$. Then, $f_{\mu,\pi_1}(z)<f_{\mu,\pi_1}(-\infty)=0$ for small $z$.
\end{proof}

\section{Optimal mFDR control (proof of \prettyref{thm:optimal_sol_of_mFDR&mFNR} and \prettyref{lmm:optimal_sol_of_mFDR&EFN})}\label{app:OptRulemFDR}

First, consider a single test: $\theta\sim\Bern(\pi_1)$, $X\,|\,\theta \sim (1-\theta)f_0+\theta f_1$. Given a decision rule $\delta\in\{0,1\}$, define $\Pi_{i|j}(\delta)\triangleq \Prob(\delta=i\,|\,\theta=j)$, $i,j\in\{0,1\}$. By the Neyman-Pearson lemma \cite{neyman1933ix}, the smallest $\Pi_{0|1}(\delta)$ subject to $\Pi_{1|0}(\delta)\leq\alpha$ is achieved by
\begin{align*}
    \delta=
    \begin{cases}
        1 & \text{if } W < c\\
        \Bern(p) & \text{if } W = c\\
        0 & \text{if } W > c
    \end{cases}
\end{align*}
for some $c,p\in[0,1]$ such that $\Pi_{1|0}(\delta)=\alpha$. Let $u=\Prob(\delta=1) = \Prob(W<c) + p\cdot\Prob(W=c)$, and consider the decision rule $\tilde{\delta}\equiv S_u$ defined in \eqref{eq:S_u}. We claim that $\Pi_{0|1}(\delta)=\Pi_{0|1}(\tilde{\delta})$ and $\Pi_{1|0}(\delta)=\Pi_{1|0}(\tilde{\delta})$. As a result, we can assume $c=G^{-1}(u)$ and $p=p(u)$ without loss of generality. To show this, we first notice that $G(c)=\Prob(W\leq c)\geq u$, so $c\geq G^{-1}(u)$. If $c = G^{-1}(u)$, then $\delta=\tilde{\delta}$ almost surely. If $c> G^{-1}(u)$, then $\delta\geq\tilde{\delta}$ almost surely. It follows that $\Pi_{0|1}(\delta)=\Prob(\delta=0\,|\,\theta=1)\leq\Prob(\tilde{\delta}=0\,|\,\theta=1)=\Pi_{0|1}(\tilde{\delta})$ and $\Pi_{1|0}(\delta)=\Prob(\delta=1\,|\,\theta=0)\geq\Prob(\tilde{\delta}=1\,|\,\theta=0)=\Pi_{1|0}(\tilde{\delta})$. On the other hand, note that 
\begin{align*}
    \pi_0\Pi_{1|0}(\delta) - \pi_1\Pi_{0|1}(\delta) &= \Prob(\delta=1,\theta=0) - \Prob(\delta=0,\theta=1)= \Prob(\delta=1) - \Prob(\theta=1)\\
    &= \Prob(\tilde{\delta}=1) - \Prob(\theta=1)= \pi_0\Pi_{1|0}(\tilde{\delta}) - \pi_1\Pi_{0|1}(\tilde{\delta}).
\end{align*}
We have
\[0=\pi_0(\Pi_{1|0}(\delta)-\Pi_{1|0}(\tilde{\delta})) - \pi_1(\Pi_{0|1}(\delta)-\Pi_{0|1}(\tilde{\delta}))\geq 0,\]
which implies that $\Pi_{0|1}(\delta)=\Pi_{0|1}(\tilde{\delta})$ and $\Pi_{1|0}(\delta)=\Pi_{1|0}(\tilde{\delta})$. Therefore, it suffices to consider the Neyman-Pearson tests of the form \eqref{eq:S_u}.

Given a pair of null and alternative pdfs $(f_0,f_1)$, define the region \[\calR(f_0,f_1)\triangleq \{(\Pi_{0|0}(\delta),\Pi_{0|1}(\delta)):\,\delta\in\{0,1\}\text{ is a decision rule}\}\subset [0,1]^2.\]
\begin{supplemma}
$\calR(f_0,f_1)$ is convex and its lower boundary, denoted by $\beta^*(s), s\in[0,1]$, is achieved by Neyman-Pearson tests.
\end{supplemma}
\begin{proof}
For any two points $(\Pi_{0|0}(\delta_1), \Pi_{0|1}(\delta_1))$, $(\Pi_{0|0}(\delta_2), \Pi_{0|1}(\delta_2))$ in $\calR(f_0,f_1)$, and for any $\lambda\in[0,1]$, consider $\delta$ such that $\delta=\delta_1$ with probability $\lambda$ and $\delta=\delta_2$ with probability $1-\lambda$. Then $\Pi_{0|0}(\delta) = \lambda\Pi_{0|0}(\delta_1) + (1-\lambda)\Pi_{0|0}(\delta_2)$ and $\Pi_{0|1}(\delta) = \lambda\Pi_{0|1}(\delta_1) + (1-\lambda)\Pi_{0|1}(\delta_2)$. Since $(\Pi_{0|0}(\delta), \Pi_{0|1}(\delta))\in\calR(f_0,f_1)$, we have $\lambda(\Pi_{0|0}(\delta_1), \Pi_{0|1}(\delta_1))+(1-\lambda)(\Pi_{0|0}(\delta_2), \Pi_{0|1}(\delta_2))\in\calR(f_0,f_1)$. Therefore, $\calR(f_0,f_1)$ is convex. By Neyman-Pearson lemma, for a fixed $\Pi_{0|0}(\delta)=s$, the minimum $\Pi_{0|1}(\delta)$ is obtained by Neyman-Pearson tests.
\end{proof}
Now we get back to multiple testing problems and prove \prettyref{thm:optimal_sol_of_mFDR&mFNR}. For any decision rule $\delta^n=(\delta_1,\cdots,\delta_n)\in\{0,1\}^n$, let $\Pi^i_{0|0} = \Prob(\delta_i=0\,|\,\theta_i=0)$ and $\Pi^i_{0|1} = \Prob(\delta_i=0\,|\,\theta_i=1)$, $i=1,\cdots,n$. Thinking of $\delta_i$ as a single test on the observation $X_i$, we have $(\Pi^i_{0|0},\Pi^i_{0|1})\in\calR(f_0,f_1)$ for all $i$. Let $\bar{\Pi}_{0|0}=\frac{1}{n}\sum_{i=1}^n\Pi^i_{0|0}$ and $\bar{\Pi}_{0|1}=\frac{1}{n}\sum_{i=1}^n\Pi^i_{0|1}$. Then
\begin{align*}
    &\mFDR(\delta^n) =\frac{\Expect\left[\sum_{i=1}^n \delta_i(1-\theta_i)\right]}{\Expect\left[\sum_{i=1}^n \delta_i\right]} = \frac{\pi_0(1-\bar{\Pi}_{0|0})}{\pi_0(1-\bar{\Pi}_{0|0})+\pi_1(1-\bar{\Pi}_{0|1})}\quad\text{and}\\
    &\mFNR(\delta^n) =\frac{\Expect\left[\sum_{i=1}^n (1-\delta_i)\theta_i\right]}{\Expect\left[\sum_{i=1}^n (1-\delta_i)\right]} = \frac{\pi_1\bar{\Pi}_{0|1}}{\pi_0\bar{\Pi}_{0|0}+\pi_1\bar{\Pi}_{0|1}},\,\,\EFN(\delta^n)=\pi_1\bar{\Pi}_{0|1}.
\end{align*}
Since $\calR(f_0,f_1)$ is convex, its lower boundary $\beta^*$ is a convex function. Then 
\begin{align}\label{ineq:beta*}
    \beta^*(\bar{\Pi}_{0|0})\leq \frac{1}{n}\sum_{i=1}^n \beta^*(\Pi^i_{0|0})\leq \frac{1}{n}\sum_{i=1}^n \Pi^i_{0|1} = \bar{\Pi}_{0|1}
\end{align}
where the first inequality is by Jensen inequality and the second is by noting that $(\Pi^i_{0|0},\Pi^i_{0|1})\in\calR(f_0,f_1)$. Consider the decision rule $\tilde{\delta}^n\triangleq (\tilde{\delta}_1,\cdots,\tilde{\delta}_n)$ comprising $n$ independent and identical Neyman-Pearson tests such that $\Prob(\tilde{\delta}_i=0\,|\,\theta_i=0)= \bar{\Pi}_{0|0}$ and $\Prob(\tilde{\delta}_i=0\,|\,\theta_i=1)= \beta^*(\bar{\Pi}_{0|0})$. Then we have
\[
\mFDR(\tilde{\delta}^n) = \frac{\pi_0(1-\bar{\Pi}_{0|0})}{\pi_0(1-\bar{\Pi}_{0|0})+\pi_1(1-\beta^*(\bar{\Pi}_{0|0}))}\overset{\eqref{ineq:beta*}}{\leq} \frac{\pi_0(1-\bar{\Pi}_{0|0})}{\pi_0(1-\bar{\Pi}_{0|0})+\pi_1(1-\bar{\Pi}_{0|1})} = \mFDR(\delta^n),
\]
\[
\mFNR(\tilde{\delta}^n) = \frac{\pi_1\beta^*(\bar{\Pi}_{0|0})}{\pi_0\bar{\Pi}_{0|0}+\pi_1\beta^*(\bar{\Pi}_{0|0})}\overset{\eqref{ineq:beta*}}{\leq}\frac{\pi_1\bar{\Pi}_{0|1}}{\pi_0\bar{\Pi}_{0|0}+\pi_1\bar{\Pi}_{0|1}}= \mFNR(\delta^n),
\]
and
\[
\EFN(\tilde{\delta}^n)=\pi_1\beta^*(\bar{\Pi}_{0|0})\leq\pi_1\bar{\Pi}_{0|1}\leq\EFN(\delta^n).
\]
Thus, $\delta^n$ is dominated by $\tilde{\delta}^n$, so it suffices to consider the decision rules $\delta^n(u)$ consisting of $n$ \iid copies of $S_u$. Note that 
\[
\mFDR(\delta^n(u)) = \frac{\Expect[\delta_1(1-\theta_1)]}{\Expect[\delta_1]}=\frac{\pi_0\Expect_0[\delta_1]}{u}= \frac{1}{u}\Expect\left[\frac{\pi_0f_0(X_1)}{f(X_1)}\delta_1\right] = \frac{\Expect[W_1\indc{\delta_1=1}]}{u} = a(u),
\]
\begin{align*}
    \mFNR(\delta^n(u)) &= \frac{\Expect[(1-\delta_1)\theta_1]}{\Expect[1-\delta_1]}=\frac{\pi_1\Expect_1[1-\delta_1]}{1-u} =\frac{1}{1-u}\Expect\left[\frac{\pi_1f_1(X_1)}{f(X_1)}(1-\delta_1)\right]\\
    &= \frac{1}{1-u}\Expect[(1-W_1)\indc{\delta_1=0}] = b(u)
\end{align*}
and
\[
\EFN(\delta^n(u)) = \Expect[(1-\delta_1)\theta_1]=(1-u)\mFNR(\delta^n(u))=(1-u)b(u).
\]
Hence, the optimization problems \eqref{eq:opt_prob_mFDR&mFNR} and \eqref{eq:opt_prob_a&b} are equivalent, and \eqref{eq:opt_prob_mFDR&EFN} and \eqref{eq:opt_prob_a&b'} are equivalent. Since \eqref{eq:opt_prob_a&b} and \eqref{eq:opt_prob_a&b'} have an optimal solution $u^*(\alpha)$ given in \prettyref{lmm:properties_of_b}, the optimal solution of \eqref{eq:opt_prob_mFDR&mFNR} and \eqref{eq:opt_prob_mFDR&EFN} also exists and has the form \eqref{eq:optimal_sol_of_mFDR&mFNR}. This completes the proof of \prettyref{thm:optimal_sol_of_mFDR&mFNR} and \prettyref{lmm:optimal_sol_of_mFDR&EFN}.

\section{Proofs of auxiliary results}
\subsection{Proof of \prettyref{prop:asym-BH}}\label{app:asym-BH}

Recall the definition of $a(u)$ and $b(u)$ in 
\prettyref{eq:a(u)} and \prettyref{eq:b(u)}. By assumption, $W=\frac{\pi_0}{f(X)}=\frac{\pi_0}{\pi_0+\pi_1 f_1(X)}$ is a continuous random variable and is strictly increasing in $X$.
Furthermore, $a(u)=\Expect[W|W<G^{-1}(u)]$ and $b(u)=\Expect[1-W|W>G^{-1}(u)]$, where $G$ is the CDF of $W$.
 Thus, \eqref{eq:opt_prob_a&b} is equivalent to 
\begin{align*}
     b^*(\alpha) = & \inf_{t\in[0,1]} \,\,\,\Expect[1-W|X>t]\\
    & \,\,\,\,\,\text{s.t.}\quad \Expect[W|X\leq t]\leq \alpha
\end{align*}
where we agree upon that $\Expect[W|X\leq 0]=0$ and $\Expect[1-W|X>1]=0$. 

Denote by $F$ the CDF associated with the PDF $f$, which by assumption is concave on $[0,1]$.
Thus $\Expect[W|X\leq t]=\frac{\pi_0\Expect_0[\indc{X\leq t}]}{\Prob(X\leq t)}=\frac{\pi_0 t}{F(t)}$ is increasing in $t$ and $\Expect[1-W|X>t]=\frac{\pi_1\Expect_1[\indc{X> t}]}{\Prob(X> t)}=\frac{\pi_1 \Prob_1(X> t)}{1-F(t)} = 1-\frac{\pi_0(1-t)}{1-F(t)}$ is decreasing in $t$, so $b^*(\alpha)$ is achieved by $t^*(\alpha)\triangleq \sup\{t\in[0,1]:\,\pi_0 t/F(t)\leq\alpha\}$. In fact, since $\pi_0t/F(t)$ is continuous on $(0,1]$ and $\lim_{t\to 0^+}\pi_0t/F(t)=\pi_0/f(0+) < \alpha < \pi_0=\pi_0/F(1)$, we have $\pi_0t^*(\alpha)/F(t^*(\alpha))=\alpha$. 
Furthermore, using similar arguments as in \cite[Theorem 1]{genovese2002operating}, we can show $\frac{i^*\alpha}{n\pi_0}\toprob t^*(\alpha)$, as $n\to\infty$. Let $\tilde{F}(t,s)=\Prob(X_1\leq t,\theta_1\leq s)$ be the CDF of $(X_1,\theta_1)$ and $\tilde{F}_{n}(t,s)=\frac{1}{n}\sum_{i=1}^n\indc{X_i\leq t, \theta_i\leq s}$ be the empirical CDF of $(X_1,\theta_1),\cdots,(X_n,\theta_n)$. Then 
\[
\Bigg|\frac{1}{n}\sum_{i=1}^n\indc{X_i\leq \frac{i^*\alpha}{n\pi_0},\theta_i=0}-\tilde{F}\pth{\frac{i^*\alpha}{n\pi_0},0}\Bigg| = \Bigg|\tilde{F}_n\pth{\frac{i^*\alpha}{n\pi_0},0}-\tilde{F}\pth{\frac{i^*\alpha}{n\pi_0},0}\Bigg| \leq \|\tilde{F}_n-\tilde{F}\|_\infty\toprob 0
\] by the multivariate Dvoretzky-Kiefer-Wolfowitz (DKW) inequality \cite[Theorem 3.2]{naaman2021tight}.
Note that $\tilde{F}\pth{\frac{i^*\alpha}{n\pi_0},0} = \pi_0\frac{i^*\alpha}{n\pi_0}\toprob\pi_0t^*(\alpha)$, we have 
\[
\frac{1}{n}\sum_{i=1}^n\indc{X_i\leq \frac{i^*\alpha}{n\pi_0},\theta_i=0}\toprob\pi_0t^*(\alpha),\text{ as }n\to\infty.
\]
By the definition of $i^*$, the oracle BH is equivalent to $\delta^n_{\rm BH}(\alpha) = \pth{\indc{X_i \leq \frac{i^*\alpha}{n\pi_0}}}_{i=1,\cdots,n}$. Therefore, 
\begin{align*}
    \FDP(\delta_{\rm BH}^n(\alpha)) = \frac{\frac{1}{n}\sum_{i=1}^n \indc{X_i \leq \frac{i^*\alpha}{n\pi_0},\theta_i=0}}{i^*/n} \toprob \frac{\pi_0t^*(\alpha)}{\pi_0t^*(\alpha)/\alpha}=\alpha,
\end{align*}
and
\begin{align*}
    \mFDR(\delta_{\rm BH}^n(\alpha)) = \frac{\Expect\left[\frac{1}{n}\sum_{i=1}^n \indc{X_i \leq \frac{i^*\alpha}{n\pi_0},\theta_i=0}\right]}{\Expect\left[i^*/n\right]} \to \frac{\pi_0t^*(\alpha)}{\pi_0t^*(\alpha)/\alpha}=\alpha.
\end{align*}

Similarly,
\begin{align*}
    \FNP(\delta_{\rm BH}^n(\alpha)) = \frac{\frac{1}{n}\sum_{i=1}^n \indc{X_i > \frac{i^*\alpha}{n\pi_0},\theta_i=1}}{1-i^*/n} \toprob \frac{\pi_1\pth{1-F_1\pth{t^*(\alpha)}}}{1-\pi_0t^*(\alpha)/\alpha}=\frac{\pi_1\pth{1-F_1\pth{t^*(\alpha)}}}{1-F(t^*(\alpha))}=b^*(\alpha),
\end{align*}
and
\begin{align*}
    \mFNR(\delta_{\rm BH}^n(\alpha)) = \frac{\Expect\left[\frac{1}{n}\sum_{i=1}^n \indc{X_i > \frac{i^*\alpha}{n\pi_0},\theta_i=1}\right]}{\Expect\left[1-i^*/n\right]}
    &\to \frac{\pi_1\pth{1-F_1\pth{t^*(\alpha)}}}{1-\pi_0t^*(\alpha)/\alpha}=b^*(\alpha).
\end{align*}

By \prettyref{thm:optimal_sol_of_mFDR&mFNR}, we have $\optmFNR(\alpha)=b^*(\alpha)$, which completes the proof.

\subsection{Proof of \prettyref{lmm:properties_of_a&b}}\label{app:Property-a&b}
For any $u\in(0,1]$, we have
\begin{align*}
    a(u)&=\Expect\left[W\,|\,S_u = 1\right] = \frac{1}{u}\Expect[W\indc{S_u = 1}]\\
        &= \frac{1}{u}\pth{\Expect\left[W\indc{W < G^{-1}(u)}\right] + p(u)G^{-1}(u)\Prob(W = G^{-1}(u))}\\
        &= \frac{1}{u}\pth{\Expect\left[W\indc{W < G^{-1}(u)}\right] + G^{-1}(u)\pth{u-\Prob\pth{W < G^{-1}(u)}}}\\
        &= G^{-1}(u) - \frac{1}{u}\pth{\Expect\left[(G^{-1}(u) -W)\indc{W < G^{-1}(u)}\right]}.
\end{align*}
If $u' \geq u > 0$, then $G^{-1}(u')\geq G^{-1}(u)$, and 
\begin{align*}
    a(u')-a(u) &= G^{-1}(u') - G^{-1}(u)- \frac{1}{u'}\pth{\Expect\left[(G^{-1}(u') -W)\indc{W < G^{-1}(u')}\right]}\\
    &+ \frac{1}{u}\pth{\Expect\left[(G^{-1}(u) -W)\indc{W < G^{-1}(u)}\right]}\\
    &\geq G^{-1}(u') - G^{-1}(u)- \frac{1}{u'}\pth{\Expect\left[(G^{-1}(u') -W)\indc{W < G^{-1}(u')}\right]}\\
    &+ \frac{1}{u'}\pth{\Expect\left[(G^{-1}(u) -W)\indc{W < G^{-1}(u)}\right]}\\
    &= G^{-1}(u') - G^{-1}(u)- \frac{1}{u'}\pth{\Expect\left[(G^{-1}(u') -W)\indc{G^{-1}(u) \leq W < G^{-1}(u')}\right]}\\
    &- \frac{1}{u'}\pth{\Expect\left[(G^{-1}(u') - G^{-1}(u))\indc{W < G^{-1}(u)}\right]}\\
    &\geq G^{-1}(u') - G^{-1}(u)- \frac{1}{u'}\pth{\Expect\left[(G^{-1}(u') -G^{-1}(u))\indc{G^{-1}(u) \leq W < G^{-1}(u')}\right]}\\
    &- \frac{1}{u'}\pth{\Expect\left[(G^{-1}(u') - G^{-1}(u))\indc{W < G^{-1}(u)}\right]}\\
    &= G^{-1}(u') - G^{-1}(u) - \frac{1}{u'}(G^{-1}(u') - G^{-1}(u))\Prob\pth{W < G^{-1}(u')}\\
    &= \frac{1}{u'}(G^{-1}(u') - G^{-1}(u))\pth{u'-\bar{G}(G^{-1}(u')}) \geq 0
\end{align*}
where the last inequality is by noting that $\bar{G}(G^{-1}(u')\leq u'$. Combined with the fact that $a(u)\geq 0=a(0)$, we conclude $a$ is increasing on $[0,1]$. To show the continuity, it suffices to show $\Expect\left[W\indc{S_u = 1}\right]$ is continuous in $u\in(0,1]$ by noting that $a(u) = \Expect\left[W\indc{S_u = 1}\right]/u$. In fact, for any $0< u\leq u'\leq 1$, we have $G^{-1}(u) \leq G^{-1}(u')$. If $G^{-1}(u) = G^{-1}(u') \triangleq  t_0 \in (0,1]$, then \[\big|\Expect\left[W\indc{S_{u'} = 1}\right]-\Expect\left[W\indc{S_u = 1}\right]\big| = (u'-u)t_0 \leq u'-u.\] If $G^{-1}(u) < G^{-1}(u')$, then
\begin{align*}
&\quad\,\indc{S_{u'} = 1}-\indc{S_u = 1}\\
&= \pth{\indc{S_{u'} = 1}-\indc{S_u = 1}}\indc{W < G^{-1}(u')}+ \pth{\indc{S_{u'} = 1}-\indc{S_u = 1}}\indc{W \geq G^{-1}(u')}\\
&= \pth{1-\indc{S_u = 1}}\indc{W < G^{-1}(u')}+ \pth{\indc{S_{u'} = 1}-0}\indc{W \geq G^{-1}(u')}\geq 0.
\end{align*}
It follows that 
\begin{align*}
    \big|\Expect\left[W\indc{S_u = 1}\right]-\Expect\left[W\indc{S_{u'} = 1}\right]\big|&= \Expect[W(\indc{S_{u'} = 1}-\indc{S_u = 1})]\\
    &\leq \Expect[\indc{S_{u'} = 1}-\indc{S_u = 1}]=u'-u
\end{align*}
Hence, $\Expect\left[W\indc{S_u = 1}\right]$ is continuous on $u\in(0,1]$. We complete the proof of the first part. Similarly, we can prove $b$ is decreasing on $[0,1]$ and continuous on $[0,1)$.

\subsection{Proof of \prettyref{lmm:properties_of_b}}\label{app:ProofLmm-b^*}
By \prettyref{lmm:properties_of_a&b}, $b$ is decreasing and $a$ is increasing on $[0,1]$, and $a$ is continuous on $(0,1]$, so the infimum $b^*(\alpha)$ is achieved at $u^*(\alpha) = \sup\{u\in[0,1]: a(u)\leq\alpha\}$. Then $b^*(0)=b(0)=\Expect[1-W\,|\,S_0=0]=\Expect[1-W]=\pi_1$. Since $a(1)=\Expect[W\,|\,S_1=1]=\Expect[W]=\pi_0$, we have $b^*(\alpha)=b(1)=0$ for $\alpha\in[\pi_0,1]$.  By the definition of $b^*(\alpha)$, it is obviously decreasing in $\alpha$. 
Next, we prove the right-continuity. Let $a_0\triangleq \lim_{u\to 0^+}a(u)$. If $\alpha\in[0,a_0)$, then $u^*(\alpha)=0$ and thus $b^*(\alpha) = b(u^*(\alpha))=b(0)=\Expect[1-W]=\pi_1$. If $\alpha\in[\pi_0,1)$, then $u^*(\alpha)=1$ by noting that $a(1)=\Expect[W]=\pi_0$, so $b^*(\alpha) =b(u^*(\alpha))=b(1)=0$. Thus, $b^*$ is right-continuous on $[0,a_0)$ and $[\pi_0,1)$. For $\alpha\in[a_0,\pi_0)$, if $u^*(\alpha)=1$, then for any $\alpha'>\alpha$, we have $u^*(\alpha')=1$ and thus $b^*(\alpha') =0=b^*(\alpha)$. If $u^*(\alpha)<1$, then by definition $a(u)>\alpha$ for $u>u^*(\alpha)$. For any positive sequence $\{\delta_n\}_{n=1}^\infty$ such that $\delta_n\to 0$ as $n\to\infty$, without loss of generality assume $\alpha+\delta_n\in(a_0,\pi_0)$ for all $n$. Then by the continuity of $a$ we have $a(u^*(\alpha+\delta_n))=\alpha+\delta_n$. If $u^*(\alpha+\delta_n)\nrightarrow u^*(\alpha)$, then there exists $\epsilon_0>0$ and subsequence $\{\delta_{n_k}\}_{k=1}^\infty$ such that $u^*(\alpha+\delta_{n_k})\geq u^*(\alpha)+\epsilon_0$ for any $k$. Then
\[
\alpha < a(u^*(\alpha)+\epsilon_0) \leq \liminf_{k\to\infty} a(u^*(\alpha+\delta_{n_k})) = \liminf_{k\to\infty}(\alpha+\delta_{n_k}) = \alpha
\]
which is a contradiction. Thus, $u^*(\alpha+\delta_n)\to u^*(\alpha)$ as $n\to\infty$. Noting that $b$ is continuous in $[0,1)$ and $u^*(\alpha)<1$, we get $b^*(\alpha+\delta_n) = b(u^*(\alpha+\delta_n))\to b(u^*(\alpha)) = b^*(\alpha)$, so $b^*$ is also right-continuous on $[a_0,\pi_0)$.

\subsection{Proof of \prettyref{lmm:approx_of_obj&constr}}\label{app:ProofMainLmm}
We first describe some properties of $G$ and $\bar{G}$, which are defined in \prettyref{sec:AuxiDefs}.
\begin{supplemma}
For any $u\in[0,1]$, we have $\Prob\pth{G(W)<u}\leq u$ and $\Prob\pth{\bar{G}(W)\leq u}\geq u$.
\end{supplemma}
\begin{proof}
\[
\Prob\pth{G(W)<u} = \Prob\pth{W< G^{-1}(u)} = \bar{G}\pth{G^{-1}(u)}\leq u,
\]
where the first equality is because $G(W)\geq u \iff W\geq G^{-1}(u)$. Let $\bar{G}^{-1}(u) \triangleq  \sup\{t:\,\bar{G}(t) \leq u\}$, $u\in[0,1]$. Then
\[
\Prob\pth{\bar{G}(W)\leq u} = \Prob\pth{W\leq \bar{G}^{-1}(u)} = G\pth{\bar{G}^{-1}(u)}\geq u,
\]
where the first equality is because $\bar{G}(W)\leq u \iff W\leq \bar{G}^{-1}(u)$. 
\end{proof}

Now we prove \eqref{ineq:ApproxConstr} in the following steps.
\begin{enumerate}
    \item Show 
    \begin{align}\label{eq:step1}
        \Prob\pth{\sup\limits_{0\leq t\leq 1}\Big|\frac{1}{n}\sum_{i=1}^n (W_i-t)\indc{W_i< t} - \Expect[(W-t)\indc{W< t}]\Big|>\epsilon} \leq 2 \exp(-2n\epsilon^2).
    \end{align}
    First, we have
    \begin{align*}
        &\quad\,\Expect[(t-W)\indc{W< t}] =\Expect[(t-W)\indc{W\leq t}]=\Expect[\int_W^t ds\cdot\indc{W\leq t}]\\
        &= \Expect[\int_0^1 \indc{W\leq s\leq t, W\leq t}\,ds]= \Expect[\int_0^t \indc{W\leq s}\,ds]= \int_0^t \Prob\pth{W\leq s}ds
    \end{align*}
    Similarly, $\frac{1}{n}\sum_{i=1}^n (t-W_i) \indc{W_i\leq t} = \Expect_{G_n}[(t-W)\indc{W\leq t}] = \int_0^t \Prob_n\pth{W\leq s}ds$, where $G_n$ is the empirical CDF of $W_1,\cdots,W_n$. Then for any $t\in [0,1]$, we have
    \[
    \Big|\frac{1}{n}\sum_{i=1}^n (W_i-t)\indc{W_i< t} - \Expect[(W-t)\indc{W< t}]\Big| \leq \int_0^t \Big|G_n(s)-G(s)\Big|ds \leq \|G-G_n\|_\infty.
    \]
    By the DKW inequality \cite{massart1990tight}, $\Prob\pth{\|G-G_n\|_\infty > \epsilon} \leq 2\exp(-2n\epsilon^2)$. We obtain \prettyref{eq:step1}.
    
    \item Show 
    \begin{align}
        &\quad\Prob\pth{\sup\limits_{k\geq\tau n}\Big|\frac{1}{n}\sum_{i=1}^n (W_i-W_{(k)})\indc{W_i< W_{(k)}} - \Expect[(W-W_{(k)})\indc{W< W_{(k)}}]\Big|>\epsilon}\notag\\
        &\leq 2 \exp(-2n\epsilon^2).\label{eq:step2}
    \end{align}    
    In fact,
    \begin{align*}
        &\quad\,\Prob\pth{\sup\limits_{k\geq\tau n}\Big|\frac{1}{n}\sum_{i=1}^n (W_i-W_{(k)})\indc{W_i< W_{(k)}} - \Expect[(W-W_{(k)})\indc{W< W_{(k)}}]\Big|>\epsilon}\notag\\
        &\leq \Prob\pth{\sup\limits_{0\leq t\leq 1}\Big|\frac{1}{n}\sum_{i=1}^n (W_i-t)\indc{W_i< t} - \Expect[(W-t)\indc{W< t}]\Big|>\epsilon}\overset{\eqref{eq:step1}}{\leq} 2 \exp(-2n\epsilon^2).
    \end{align*}
    
    \item Show 
    \begin{align}
        &\quad\Prob\pth{\sup\limits_{k\geq\tau n}\Big|\frac{1}{k}\sum_{i=1}^k W_{(i)} - \frac{n}{k}\pth{\Expect[(W-W_{(k)})\indc{W< W_{(k)}}]+\frac{k}{n}W_{(k)}}\Big|>\epsilon}\notag\\
        &\leq 2 \exp(-2n\tau^2\epsilon^2).\label{eq:step3}
    \end{align}    
    Since
    \begin{align*}
        \frac{1}{k}\sum_{i=1}^k W_{(i)} &= \frac{1}{k}\pth{\sum_{i=1}^n W_i\cdot\indc{W_i < W_{(k)}} + W_{(k)}\pth{k-\sum_{i=1}^n \indc{W_i < W_{(k)}}}}\\
        &= \frac{1}{k}\pth{\sum_{i=1}^n (W_i-W_{(k)})\indc{W_i < W_{(k)}}}+ W_{(k)}\\
        &= \frac{n}{k}\pth{\frac{1}{n}\sum_{i=1}^n (W_i-W_{(k)})\indc{W_i < W_{(k)}}}+ W_{(k)}
    \end{align*}
    and
    \begin{align*}
        \frac{n}{k}\pth{\Expect[(W-W_{(k)})\indc{W< W_{(k)}}]+\frac{k}{n}W_{(k)}}= \frac{n}{k}\cdot\Expect[(W-W_{(k)})\indc{W< W_{(k)}}]+W_{(k)}
    \end{align*}
    we have
    \begin{align*}
        &\quad\,\Prob\pth{\sup\limits_{k\geq\tau n}\Big|\frac{1}{k}\sum_{i=1}^k W_{(i)} - \frac{n}{k}\pth{\Expect[(W-W_{(k)})\indc{W< W_{(k)}}]+\frac{k}{n}W_{(k)}}\Big|>\epsilon}\notag\\
        &\leq \Prob\pth{\sup\limits_{k\geq\tau n}\Big|\frac{1}{n}\sum_{i=1}^n (W_i-W_{(k)})\indc{W_i< W_{(k)}} - \Expect[(W-W_{(k)})\indc{W< W_{(k)}}]\Big|>\tau\epsilon}\notag\\
        &\leq \Prob\pth{\sup\limits_{k\geq\tau n}\Big|\frac{1}{n}\sum_{i=1}^n (W_i-W_{(k)})\indc{W_i< W_{(k)}} - \Expect[(W-W_{(k)})\indc{W< W_{(k)}}]\Big|>\tau\epsilon}\notag\\
        &\overset{\eqref{eq:step2}}{\leq} 2 \exp(-2n\tau^2\epsilon^2).
    \end{align*}
    
    \item Define $\phi_k(t)\triangleq  \Expect[(W-t)\indc{W< t}] + \frac{k}{n}t$, for $t\in[0,1]$. Show
    \begin{align}
        \Prob\pth{\sup\limits_{k\geq\tau n}\frac{n}{k}\,\Big|\phi_k(W_{(k)}) - \phi_k(G^{-1}(k/n)) \Big|>\epsilon} \leq 4 \exp(-2n\tau^2\epsilon^2).\label{eq:step4}
    \end{align}    
    For any $t_1>t_2$, by direct calculation, we have
    \begin{align*}
        \phi_k(t_2) - \phi_k(t_1) = (t_1-t_2)\pth{\bar{G}(t_1)-\frac{k}{n}} - \Expect[(W-t_2)\indc{t_2 < W < t_1}].
    \end{align*}
    Let $t_1 = W_{(k)}$ and $t_2 = G^{-1}(k/n)$. If $t_1 > t_2$, then $\phi_k(t_2) - \phi_k(t_1) \geq (t_1-t_2)\pth{\bar{G}(t_1)-\frac{k}{n}} - \Expect[(t_1-t_2)\indc{t_2 < W < t_1}]= (t_1-t_2)\pth{G(t_2)-\frac{k}{n}} \geq 0$, and $\phi_k(t_2) - \phi_k(t_1) \leq(t_1-t_2)\pth{\bar{G}(t_1)-\frac{k}{n}} \leq \bar{G}(W_{(k)}) - \frac{k}{n}$. If $t_2 > t_1$, then $\phi_k(t_2) - \phi_k(t_1) = (t_2-t_1)\pth{\frac{k}{n}-\bar{G}(t_2)} + \Expect[(W-t_1)\indc{t_1 < W < t_2}] \geq(t_2-t_1)\pth{\frac{k}{n}-\bar{G}(t_2)}\geq 0$, and $\phi_k(t_2) - \phi_k(t_1) \leq(t_2-t_1)\pth{\frac{k}{n}-\bar{G}(t_2)+\Prob\pth{t_1<W<t_2}} \leq \frac{k}{n} - G(W_{(k)})$. Combining the above two cases, we obtain
    \begin{align*}
    \big|\phi_k(W_{(k)}) - \phi_k(G^{-1}(k/n)) \big| \leq \max\pth{\frac{k}{n} - G(W_{(k)}), \bar{G}(W_{(k)}) - \frac{k}{n}}.
    \end{align*}
    Let $\bar{G}_n(t)\triangleq \frac{1}{n}\sum_{i=1}^n\indc{W_i<t}$. Note that $\bar{G}_n(W_{(k)})\leq\frac{k}{n}\leq G_n(W_{(k)})$, then we have
    \begin{align*}
    \big|\phi(W_{(k)}) - \phi(G^{-1}(k/n)) \big| \leq \max\pth{\|G_n-G\|_\infty, \|\bar{G}_n-\bar{G}\|_\infty}.
    \end{align*}
    Combined with the DKW inequality, we obtain \prettyref{eq:step4}.
    
    \item Show $\Prob\pth{\sup\limits_{k\geq\tau n}\Big|\frac{1}{k}\sum_{i=1}^k W_{(i)} - a(k/n)\Big|>\epsilon} \leq  6 \exp(-n\tau^2\epsilon^2/2)$, the desired \eqref{ineq:ApproxConstr}.    
    Since 
    \begin{align*}
        a(k/n)&=\Expect\left[W\,|\,S_{k/n} = 1\right] = \frac{n}{k}\Expect[W\indc{S_{k/n} = 1}]\\
        &= \frac{n}{k}\pth{\Expect\left[W\indc{W < G^{-1}(k/n)}\right] + p(u)G^{-1}(k/n)\Prob(W = G^{-1}(k/n))}\\
        &= \frac{n}{k}\pth{\Expect\left[W\indc{W < G^{-1}(k/n)}\right] + G^{-1}(k/n)\pth{\frac{k}{n}-\Prob\pth{W < G^{-1}(k/n)}}}\\
        &= \frac{n}{k}\phi(G^{-1}(k/n)),
    \end{align*}
    we have
    \begin{align*}
        &\quad\,\Prob\pth{\sup\limits_{k\geq\tau n}\Big|\frac{1}{k}\sum_{i=1}^k W_{(i)} - a(k/n)\Big|>\epsilon}\\
        &\leq \Prob\pth{\sup\limits_{k\geq\tau n}\Big|\frac{1}{k}\sum_{i=1}^k W_{(i)} - \frac{n}{k}\phi(W_{(k)})\Big|>\frac{\epsilon}{2}} + \Prob\pth{\sup\limits_{k\geq\tau n}\frac{n}{k}\,\Big|\phi(W_{(k)}) - \phi(G^{-1}(k/n)) \Big|>\frac{\epsilon}{2}}\\
        &\overset{\eqref{eq:step3}\eqref{eq:step4}}{\leq} 2\exp(-n\tau^2\epsilon^2/2) + 4\exp(-n\tau^2\epsilon^2/2)\leq 6 \exp(-n\tau^2\epsilon^2/2).
    \end{align*}
\end{enumerate}

Using similar arguments, we can show \eqref{ineq:ApproxObj}. The proof is completed.

\subsection{Proof of \prettyref{lmm:property_of_b_doule_star}}\label{app:two-value}

To start, note that if $\Expect[a(U)] \leq 0$, we have $U=0$ almost surely, so $b^{**}(0)=b(0)=\pi_1$. Since $\Expect[a(U)]\leq a(1)=\pi_0$ for any random variable $U$ supported on $[0,1]$, we have $b^{**}(\alpha)=b(1)=0$ for any $\alpha\in[\pi_0,1]$. Next, we show $b^{**}$ is convex. For any $\lambda\in[0,1]$, and $\alpha_1$, $\alpha_2\in[0,1]$, if random variables $U_1$, $U_2$ are supported on $[0,1]$ and satisfy $\Expect[a(U_1)] \leq \alpha_1$, $\Expect[a(U_2)] \leq \alpha_2$, consider the random variable $U$ such that $U= U_1$ with probability $\lambda$ and $U= U_2$ with probability $1-\lambda$. Then $U\in[0,1]$ almost surely and
\[
\Expect[a(U)] = \lambda\Expect[a(U_1)] + (1-\lambda)\Expect[a(U_2)]\leq \lambda\alpha_1 + (1-\lambda)\alpha_2.
\]
Thus,
\[
b^{**}(\lambda\alpha_1 + (1-\lambda)\alpha_2) \leq \Expect[b(U)] = \lambda\Expect[b(U_1)] + (1-\lambda)\Expect[b(U_2)].
\]
Optimizing over $U_1$ and $U_2$, we obtain $b^{**}(\lambda\alpha_1 + (1-\lambda)\alpha_2) \leq \lambda b^{**}(\alpha_1)+(1-\lambda)b^{**}(\alpha_2)$.

Next, we show $b^{**}$ is the GCM of $b^{*}$, namely, 
\[
b^{**}(\alpha) = \sup \{C(\alpha): C(t) \leq b^*(t),\forall\,t\in[0,1],\,C(\cdot) \text{ is convex on }[0,1]\}.
\]
If there exists a convex function $\tilde{C}: [0,1]\to\reals$ such that $\tilde{C}(\cdot) \leq  b^*(\cdot)$ and $\tilde{C}(\alpha) >  b^{**}(\alpha)$ for some $\alpha$, let $U^*$ denote the optimal solution\footnote{To be precise, at this point, we have not shown that the optimal solution exists. Nevertheless, we can choose a sequence of feasible solutions $\{U_k\}_{k=1}^\infty$ such that $\lim_{k\to\infty}\Expect[b(U_k)] = b^{**}(\alpha)$, and the proof also applies.} of \eqref{eq:opt_prob_a&b_random}. Since $\tilde{C}(\alpha) >b^{**}(\alpha)\geq 0$, we have $\alpha\in[0,\pi_0)$, so we can assume $\Expect[a(U^*)]= \alpha$ without loss of generality. In fact, if $\alpha' \triangleq  \Expect[a(U^*)] < \alpha$, consider $\tilde{U}^*$ such that $\tilde{U}^* = U^*$ with probability $p$ and $\tilde{U}^* = 1$ with probability $1-p$, where $p = (\pi_0-\alpha)/(\pi_0-\alpha')\in(0,1)$. Then $\Expect[a(\tilde{U}^*)] = p\Expect[a(U^*)] + (1-p)a(1) = p\alpha' + (1-p)\pi_0=\alpha$ and $\Expect[b(\tilde{U}^*)] = p\Expect[b(U^*)] + (1-p)b(1) =pb^{**}(\alpha)\leq b^{**}(\alpha)$. Then we have
\begin{align*}
\tilde{C}(\alpha) &>  b^{**}(\alpha) = \Expect[b(U^*)] \geq \Expect[b^*(a(U^*))]\geq \Expect[\tilde{C}(a(U^*))] \geq \tilde{C}(\Expect[a(U^*)]) = \tilde{C}(\alpha)
\end{align*}
where the second inequality is because $b^*(a(u))\leq b(u)$ for any $u\in[0,1]$ by the definition of $b^*$. This leads to a contradiction.

Finally, we show \eqref{eq:opt_prob_a&b_random} admits a binary-valued solution.
\begin{supplemma}
The optimization problem \eqref{eq:opt_prob_a&b_random} is equivalent to
\begin{align}
\begin{split}
    b^{**}(\alpha)= &\inf \,\,\,\Expect[ b^*(Z)]\\
    & \,{\rm s.t.}\,\,\,\,\Expect[Z] \leq \alpha\
\end{split}
\label{eq:opt_prob_Z}
\end{align}
where the infimum is over all random variables $Z$ supported on $[0,1]$.
\end{supplemma}
\begin{proof}
For any feasible solution $Z$ of \eqref{eq:opt_prob_Z}, choose $U = u^*(Z)$, where $u^*(\cdot)$ is the optimal solution of \eqref{eq:opt_prob_a&b} given in \prettyref{lmm:properties_of_b}. Then $U\in[0,1]$ almost surely and $\Expect[a(U)] = \Expect[a(u^*(Z))]\leq\Expect[Z]\leq\alpha$, and $\Expect[b(U)] = \Expect[b(u^*(Z))] = \Expect[ b^*(Z)]$.

On the other hand, for any feasible solution $U$ of \eqref{eq:opt_prob_a&b_random}, choose $Z=a(U)$. Then $Z\in[0,1]$ almost surely and $\Expect[Z]\leq\alpha$, and $\Expect[ b^*(Z)] = \Expect[ b^*(a(U))]\leq \Expect[b(U)]$. Therefore, \eqref{eq:opt_prob_a&b_random} and \eqref{eq:opt_prob_Z} are equivalent.
\end{proof}

\begin{supplemma}\label{supplmm:two_vals}
The problem \eqref{eq:opt_prob_Z} admits an optimal solution $Z^*=Z^*(\alpha)$ which takes at most two values. Thus 
$U^*=u^*(Z^*)$ is an optimal solution of 
\eqref{eq:opt_prob_a&b_random} and takes at most two values.
\end{supplemma}
\begin{proof}
Define $R\triangleq \{(z,t): z\in[0,1], t =  b^*(z)\} \subset \reals^2$. By \prettyref{lmm:properties_of_b}, $  b^*$ is decreasing and right-continuous, so $\lim_{z'\to z^-} b^*(z')$ exists for any $z\in(0,1]$ and $ b^*(z-)\triangleq \lim_{z'\to z^-} b^*(z') \geq b^*(z)$. Let $S\triangleq \{z\in(0,1]:\, b^*(z-)> b^*(z)\}$ and \[
\bar{R} \triangleq  R\,\cup\,\pth{\cup_{z\in S}\{(z,t):\, b^*(z-)\geq t> b^*(z)\}}.
\]
Then $\bar{R}$ is a compact and connected\footnote{In fact, by its construction, $\bar{R}$ is a continuous curve from $(0,\pi_1)$ to $(1,0)$ in $\reals^2$.} subset of $\reals^2$, and thus its convex hull, denoted by $\text{conv}(\bar{R})$, is also compact. For any feasible solution $Z$ of \eqref{eq:opt_prob_Z}, since $(Z, b^*(Z))\in R \subset \bar{R}$ almost surely, we have \[(\Expect[Z], \Expect[ b^*(Z)])\in \text{cl}\pth{\text{conv}(\bar{R})}=\text{conv}(\bar{R}),\] where $\text{cl}(A)$ denotes the closure of a set $A$. By the Fenchel-Eggleston-Carath\'eodory theorem \cite[Chapter 2, Theorem 18]{eggleston1958convexity}, there exists $(z_i, t_i)\in \bar{R},\, i=1,2$, and $p\in [0,1]$, such that $\Expect[Z] = pz_1 + (1-p)z_2$, and $\Expect[ b^*(Z)] = pt_1 + (1-p)t_2$. By the construction of $\bar{R}$, we have $t_i\geq  b^*(z_i),\,i=1,2$. Consider the random variable $\tilde{Z}$ such that $\Prob(\tilde{Z}=z_1)=p$ and $\Prob(\tilde{Z}=z_2)=1-p$. Then $\tilde{Z}\in[0,1]$ almost surely and $\Expect[\tilde{Z}]= pz_1+(1-p)z_2=\Expect[Z]\leq\alpha$, $\Expect[ b^*(\tilde{Z})] = p b^*(z_1)+(1-p) b^*(z_2) \leq pt_1 + (1-p)t_2=\Expect[ b^*(Z)]$. Therefore, it suffices to consider such $Z$ that takes at most two values.

Next we prove the existence of the optimizer. Let $\{Z_n\}_{n=1}^\infty$ be a sequence of feasible solutions of \eqref{eq:opt_prob_Z} such that $\lim_{n\to\infty}\Expect[ b^*(Z_n)] =  b^{**}(\alpha)$, and $\Prob\pth{Z_n = z_{n,1}} = p_n$, $\Prob\pth{Z_n = z_{n,2}} = 1-p_n$, where $p_n,z_{n,1},z_{n,2}\in[0,1]$. 
By passing to a subsequence if necessary, we can assume $\lim_{n\to\infty}p_n = p^*\in[0,1]$, $\lim_{n\to\infty}z_{n,1} = z_1^*\in[0,1]$ and $\lim_{n\to\infty}z_{n,2} = z_2^*\in[0,1]$. 
Let $Z^*$ be the random variable such that $\Prob\pth{Z^*=z^*_1}=p^*$ and $\Prob\pth{Z^*=z^*_2}=1-p^*$. Then clearly, $\Expect[Z^*] = \lim_{n\to\infty} \Expect[Z_n] \leq\alpha$  
and 
\begin{align*}
\Expect[ b^*(Z^*)] &= p^*  b^*(z^*_1) + (1-p^*)  b^*(z^*_2)\\
&= p^* b^*\pth{\lim_{n\to\infty} z_{n,1}} + (1-p^*)  b^*\pth{\lim_{n\to\infty} z_{n,2}}\\
&\leq p^* \liminf_{n\to\infty}  b^*(z_{n,1}) + (1-p^*) \liminf_{n\to\infty}  b^*(z_{n,2})\\
&= \liminf_{n\to\infty}\pth{p_n  b^*(z_{n,1}) + (1-p_n)  b^*(z_{n,2})} \\
&= \liminf_{n\to\infty} \Expect[ b^*(Z_n)]= b^{**}(\alpha)
\end{align*}
where the inequality applies by the lower semi-continuity of $ b^*$ since it is decreasing and right-continuous (\prettyref{lmm:properties_of_b}). Therefore, the optimal objective is achieved by $Z^*$.
\end{proof}

By \prettyref{supplmm:two_vals}, there exist $(\alpha_1,\alpha_2,p)$, $0\leq \alpha_1\leq\alpha\leq \alpha_2\leq 1$ and $p\in[0,1]$, such that $p\alpha_1+(1-p)\alpha_2\leq\alpha$ and
\begin{align*}
Z^* = 
\begin{cases}
\alpha_1\quad&\text{with probability }p\\
\alpha_2\quad&\text{with probability }1-p
\end{cases}
\end{align*}
Since $b^*$ is a decreasing function, we can assume $p\alpha_1+(1-p)\alpha_2=\alpha$ without loss of generality. Therefore, we have 
\begin{align*}
U^* = 
\begin{cases}
u^*(\alpha_1)\quad&\text{with probability }p\\
u^*(\alpha_2)\quad&\text{with probability }1-p
\end{cases}
\end{align*}
where $p=(\alpha_2-\alpha)/(\alpha_2-\alpha_1)$ if $\alpha_1<\alpha_2$ and $p=1$ if $\alpha_1=\alpha_2=\alpha$. The proof of \prettyref{lmm:property_of_b_doule_star} is completed.

\subsection{Proof of \prettyref{lmm:eq_for_pFDR&pFNR}}\label{app:ProofpFDR}
We show that 
\begin{align} 
    &\Expect\left[\frac{\sum_{i=1}^n\delta_i(1-\theta_i)}{\sum_{i=1}^n\delta_i}\,\Bigg |\,\sum_{i=1}^n\delta_i >0\right]=\Prob\pth{\theta_1 = 0\,|\,\delta_1=1}\text{ and }\label{eq:pFDR}\\
    &\Expect\left[\frac{\sum_{i=1}^n(1-\delta_i)\theta_i}{\sum_{i=1}^n (1-\delta_i)}\,\Bigg |\,\sum_{i=1}^n\delta_i <n\right]=\Prob\pth{\theta_1 = 1\,|\,\delta_1=0}.\label{eq:pFNR}
\end{align} 
As a result, 
\begin{align*}
    &\FDR(\delta^n)=\Expect\left[\frac{\sum_{i=1}^n\delta_i(1-\theta_i)}{\sum_{i=1}^n\delta_i}\,\Bigg |\,\sum_{i=1}^n\delta_i >0\right]\Prob\pth{\sum_{i=1}^n\delta_i >0}\leq\Prob\pth{\theta_1 = 0\,|\,\delta_1=1}\text{ and }\\
    &\FNR(\delta^n)=\Expect\left[\frac{\sum_{i=1}^n(1-\delta_i)\theta_i}{\sum_{i=1}^n(1-\delta_i)}\,\Bigg |\,\sum_{i=1}^n\delta_i <n\right]\Prob\pth{\sum_{i=1}^n\delta_i <n}\leq\Prob\pth{\theta_1 = 1\,|\,\delta_1=0}.
\end{align*}
In fact, for any non-empty $\calI \subset[n]$,
\begin{align*}
    &\quad\,\Expect\left[\frac{\sum_{i=1}^n\delta_i(1-\theta_i)}{\sum_{i=1}^n\delta_i}\,\Bigg |\,\delta_i=\indc{i\in\calI}, i\in[n]\right]= \frac{1}{|\calI|}\Expect\left[\sum_{i\in\calI}(1-\theta_i)\,\Big|\,\delta_i=1,i\in\calI\right]\\
    &= \frac{1}{|\calI|}\sum_{i\in\calI}\Expect\left[(1-\theta_i)\,|\,\delta_i=1\right]= \Prob\pth{\theta_1 = 0\,|\,\delta_1=1}
\end{align*}
by noting that $(\theta_1,\delta_1),\cdots,(\theta_n,\delta_n)$ are \iid pairs. Since this equality holds for any realization of $\delta^n$ such that $\sum_{i=1}^n\delta_i >0$, we conclude \eqref{eq:pFDR}, and similarly \eqref{eq:pFNR}, hold.

\subsection{Proof of \prettyref{lmm:best_matching}}\label{app:ProofBestMatching}
Assume $a_{(i)}=a_{s_i}$. Then, 
\begin{align*}
    \sum_{i=1}^kb_{(i)}-\sum_{i=1}^ka_{(i)}&=\sum_{i=1}^kb_{(i)}-\sum_{i=1}^ka_{s_i}\leq\sum_{i=1}^kb_{s_i}-\sum_{i=1}^ka_{s_i}\leq\sum_{i=1}^k|b_{s_i}-a_{s_i}|\leq\sum_{i=1}^n|b_i-a_i|.
\end{align*}
Similarly, $\sum_{i=1}^ka_{(i)}-\sum_{i=1}^kb_{(i)}\leq\sum_{i=1}^n|b_i-a_i|$. Therefore, $|\sum_{i=1}^kb_{(i)}-\sum_{i=1}^ka_{(i)}|\leq\sum_{i=1}^n|b_i-a_i|$ for any $k$.

\subsection{Proof of \prettyref{lmm:W&W_tilde}}\label{app:ProofWW_tilde}
We begin by recalling some standard notation from information theory:
Let $H(X)\equiv H(P_X) \triangleq\Expect[-\log(P_X(X))]$ denote the entropy of a discrete random variable $X$, where $P_X(\cdot)$ is the probability mass function of $X$. 
Similarly, the conditional entropy of $X$ given $Z$ is $H(X|Z) = \Expect_Z[H(P_{X|Z})]$.
The Kullback-Leibler divergence  between distributions $P$ and $Q$ is $D(P\|Q)\triangleq \Expect_{X\sim P}[\log \frac{dP}{dQ}(X)]$ if 
$P \ll Q$ and $\infty$ otherwise. The mutual information between random variables $X$ and $Y$ 
is denoted by $I(X;Y)\triangleq D(P_{XY}\|P_XP_Y)$, where $P_{XY}$ is their joint distribution and $P_X$, $P_Y$ are their marginals. 
Similarly, the conditional mutual information between random variables $X$ and $Y$  given $Z$ is 
$I(X;Y|Z)\triangleq \Expect_{Z}[D(P_{XY|Z}\|P_{X|Z}P_{Y|Z})]$.
For discrete $X$, one has $I(X;Y)=H(X)-H(X|Y)$ and $I(X;Y|Z)=H(X|Z)-H(X|Y,Z)$.

We now proceed to the proof of \prettyref{lmm:W&W_tilde}. 
By symmetry and using 
Tao's inequality (a consequence of Pinsker's inequality cf.~e.g.~\cite[Corollary 7.17]{PW-it}), we have 
\[
\Expect[(W_i'-\widetilde{W}_i)^2]=\Expect[(\Expect[\theta_1|X_1] - \Expect[\theta_1|X^n])^2] 
\leq I(\theta_1;X_2,\ldots,X_n|X_1).
\]
Next, 
\begin{align*}
I(\theta_1;X_2,\ldots,X_n|X_1)= & I(\theta_1;X^n) - I(\theta_1;X_1) \\
= & H(\theta_1)-H(\theta_1|X^n) - I(\theta_1;X_1) \\
\overset{(a)}{\leq} & H(\theta_1)- \frac{1}{n} H(\theta^n|X^n) - I(\theta_1;X_1) \\
= & H(\theta_1)- \frac{1}{n} H(\theta^n) + \frac{1}{n}[ I(\theta^n;X^n) - n I(\theta_1;X_1)] \\
\overset{(b)}{\leq} & H(\theta_1)- \frac{1}{n} H(\theta^n)\\
=&\frac{n_1}{n}\log\frac{n}{n_1}+\frac{n_0}{n}\log\frac{n}{n_0}-\frac{1}{n}\log\binom{n}{n_1}\\
\overset{(c)}{\leq} & \frac{\log(2\sqrt{n})}{n} \leq\frac{\log (n+1)}{n}.
\end{align*}	
where 
\begin{itemize}
	\item (a): We apply the fact that joint entropy is at most sum of marginal entropies (\cite[Theorem 1.4]{PW-it}). Furthermore, by exchangeability, $H(\theta_i|X^n)$ does not depend on $i$.
	\item (b) Since $P_{X^n|\theta^n} = \prod_{i=1}^n P_{X_i|\theta_i}$, we have $I(\theta^n;X^n) \leq \sum I(\theta_i;X_i)$ (\cite[Theorem 6.1]{PW-it}). Again by exchangeability, $I(\theta_i;X_i)$ does not depend on $i$.
	\item (c): By the non-asymptotic Stirling's approximation \cite{robbins1955remark}, we have $\binom{n}{n_1}\geq\frac{1}{2\sqrt{n}}(\frac{n}{n_1})^{n_1}(\frac{n}{n_0})^{n_0}$.
\end{itemize}

\subsection{Proof of \prettyref{thm:dependent}}
\label{app:dependent}

For the sake of conciseness, in the proof we assume $\sigma^2=1$ and abbreviate $\Sigma_n \equiv \Sigma$.
We first provide the following key ingredients:
a) approximation of $\tilde W_i = \Expect[\theta_i|X^n]$ by $W_i = \Expect[\theta_i|X_i]$  in an average $L_2$ sense;
b) concentration of partial sums of Lfdr statistics.

\begin{supplemma}
\label{supplmm:dependent-wapprox}	
	\[
	\frac{1}{n}\sum_{i=1}^n \Expect[(\tilde W_i -W_i)^2] \leq C \|\Sigma^{-1/2} - I\|_{\rm sp}.
	\]
	where $C$ is a constant only depending on $\mu$ and $\sigma^2$.
\end{supplemma}
\begin{proof}
	We again apply the information-theoretic argument in the proof 
	of \prettyref{lmm:W&W_tilde} (see \prettyref{app:ProofWW_tilde}), in order to quantify the approximate conditional independence of $\theta_i$  and $X_{\backslash i}$ given the direct observation $X_i$; the difference is that $X^n$ given $\theta^n$ are 
not independent and we do not have exchangeability.
	
	Applying Tao's inequality, we have $\Expect[(W_i-\widetilde{W}_i)^2]\leq I(\theta_i;X_{\backslash i}|X_i)
	= I(\theta_i;X^n) - I(\theta_i;X_i) = H(\theta_i) - H(\theta_i|X^n) - I(\theta_i;X_i)$. So
\begin{align*}
\sum_{i=1}^n \Expect[(W_i-\widetilde{W}_i)^2]
\leq & ~ 	 \sum_{i=1}^n H(\theta_i) - \sum_{i=1}^n H(\theta_i|X^n) - \sum_{i=1}^n I(\theta_i;X_i) \\
\leq & ~ 	\sum_{i=1}^n H(\theta_i) - H(\theta^n|X^n) - \sum_{i=1}^n I(\theta_i;X_i) \\
= & ~ 	I(\theta^n;X^n) - \sum_{i=1}^n I(\theta_i;X_i),
\end{align*}	
where the first inequality follows from joint entropy is at most the sum of the marginal entropies, and the last equality follows from the independence of $\theta_i$'s.
Write $X^n = \mu \theta^n + \Sigma^{1/2} Z^n$, where $Z^n \sim N(0,I_n)$.
Then $\sum_{i=1}^n I(\theta_i;X_i)= I(\theta^n;\mu \theta^n + Z^n)$ and 
$I(\theta^n;X^n) = I(\tilde \theta^n; \mu \tilde \theta^n + Z^n) $, where $\tilde \theta^n \equiv \Sigma^{-1/2} \theta^n$.
To bound the difference of these two mutual informations with additive Gaussian noise, we apply the transportation distance-based bound in \cite[Corollary 4]{PW15}:
\[
|I(\theta^n;\mu \theta^n + Z^n) - I(\theta^n;\mu \tilde \theta^n + Z^n)|
\leq C_0(\mu^2 + 1) \sqrt{n} W_2(\Law(\mu \theta^n+ Z^n), \Law(\mu \tilde \theta^n + Z^n))
\]
for some universal constant $C_0$, where $W_2$ denotes the 2-Wasserstein distance.
Finally, this Wasserstein distance between the two Gaussian convolutions can be bounded using the trivial coupling:
\begin{align*}
W_2(\Law(\mu \theta^n+ Z^n), \Law(\mu \tilde \theta^n + Z^n)) 
\leq & ~ W_2(\Law(\mu \theta^n), \Law(\mu \tilde \theta^n)) \\
\leq & ~ \mu \max_{\theta^n \in \{0,1\}^n} \|(\Sigma^{-1/2} - I)\theta^n\| \\
\leq & ~ \mu \sqrt{n} \|\Sigma^{-1/2} - I\|_{\rm sp}.
\end{align*}
\end{proof}

\begin{supplemma}
\label{supplmm:dependent-concentration}	
	Let $G_n(t) = \frac{1}{n} \sum_{i=1}^n \indc{W_i \leq t}$ the empirical CDF and $G(t)$ the CDF of $W_i$. 
	Then $\|G_n-G\|_\infty \to 0$ in probability provided that $\|\Sigma-I\|_{\rm sp} = o(\sqrt{n})$.	
\end{supplemma}
\begin{proof}
	Recall from \prettyref{ex:gaussian} that 
	$W_i = \frac{\pi_0 \varphi(X_i)}{\pi_0 \varphi(X_i) + \pi_1 \varphi(X_i-\mu)} 
= \frac{\pi_0}{\pi_0 + \pi_1\exp(\mu X_i - \mu^2/2)}$ is a monotone transformation of $X_i$.
Thus $\|F_n-F\|_\infty = \|G_n-G\|_\infty$, where $F_n(t) = \frac{1}{n} \sum_{i=1}^n \indc{X_i \leq t}$ and $F(t)$ are the empirical and population CDF of the $X_i$'s. (In particular, $F$ is standard normal CDF.)	
	We show that for any $\epsilon>0$,
	\[
	\prob{\|F_n-F\|_\infty \geq 2\epsilon} \leq \frac{C \|\Sigma-I\|_{\rm sp}}{\sqrt{n}\epsilon^3}.
	\]
	
	First of all, there exist $t_1<\ldots<t_m$ with $m \leq \frac{1}{\epsilon}+1$, such that 
	$\|F_n-F\|_\infty  \leq \max_{i=1,\ldots,m}|F_n(t_i)-F(t_i)| + \epsilon$.
	This follows by choosing $t_i$'s as the appropriate quantiles of $F$ such that $F(t_i)-F(t_{i-1}) \leq \epsilon, i=1,\ldots,m+1$ with $t_0 \equiv -\infty$ and $t_{m+1} \equiv \infty$, so that for any $t \in (t_{i-1},t_i)$, 
	$F_n(t)-F(t) \leq F_n(t_i)-F(t_i) + \epsilon$ and 
	$F_n(t)-F(t) \geq F_n(t_{i-1})-F(t_{i-1}) - \epsilon$.
	So by a union bound, it suffices to bound $\prob{|F_n(t)-F(t)| \geq \epsilon}$ for a fixed $t$.
	
	Next, $F_n(t) - F(t)  = \frac{1}{n} \sum_{i=1}^n [\indc{X_i \leq t} - F(t)]$. By Chebyshev inequality,
	\[
	\prob{|F_n(t)-F(t)| \geq \epsilon} \leq \frac{1}{n^2\epsilon^2} \pth{n + \sum_{i\neq j} \Cov(\indc{X_i \leq t},\indc{X_j \leq t})}.
	\]
	Note that 
	$\Cov(\indc{X_i \leq t},\indc{X_j \leq t}) \leq \rho(\indc{X_i \leq t},\indc{X_j \leq t}) \leq 
	\rho_{\max}(X_i,X_j) = |\Sigma_{ij}|$,
	where 
	$\rho_{\max}$ denotes the maximal correlation\footnote{The (Hirschfeld-Gebelein-R\'enyi) maximal correlation between random variables $X$ and $Y$ are defined as $\rho_{\max}(X,Y) = \sup\rho(f(X),g(Y))$, where the supremum is taken over all non-zero functions $f$ and $g$.} and for Gaussians
	equals to the magnitude of the usual correlation (see, e.g., \cite[Theorem 33.6]{PW-it}).
	Finally, $\sum_{i \neq j} |\Sigma_{ij}| \leq n \Fnorm{\Sigma - I} \leq n^{3/2} \|\Sigma - I\|_{\rm sp}$.	
\end{proof}
By replacing the DKW inequality with \prettyref{supplmm:dependent-concentration} in \prettyref{app:ProofMainLmm}, we can show the concentration of $\frac{1}{k}\sum_{i=1}^k W_{(i)}$ as stated in \prettyref{lmm:approx_of_obj&constr}. Combined with \prettyref{supplmm:dependent-wapprox}, we have the concentration of $\frac{1}{k}\sum_{i=1}^k \widetilde{W}_{(i)}$ as stated in \prettyref{prop:UnionBound_fixed}, which establishes the negative parts of \prettyref{thm:ora_FNR_vs_ora_mFNR} and \prettyref{thm:high_prob_formulation} in view of the proofs of \prettyref{thm:ora_FNR_vs_ora_mFNR_fixed} and \prettyref{thm:high_prob_formulation_fixed}. For the positive parts, note that the Law of Large Numbers continues to hold by applying Chebyshev's inequality and the concentration assumption in the spectral norm, then the proofs of \prettyref{thm:ora_FNR_vs_ora_mFNR} and \prettyref{thm:high_prob_formulation} in the independent two-group model are also applied here.

\section{Consistency of data-driven procedure}
\subsection{Proof of \prettyref{prop:uniform_consistency}}\label{app:ProofConsistency1}

\begin{supplemma}\label{supplmm:uniform_convergence}
$\calI=[c,d]$, where $-\infty<c<d<+\infty$, is a closed interval on the real line. Let $\{\hat{f}_n\}_{n=1}^\infty$ be a sequence of decreasing random functions on $\calI$, and $f$ be a decreasing and continuous function on $\calI$. Suppose that for any $x\in\calI$, $\hat{f}_n(x)\toprob f(x)$, as $n\to\infty$. Then $\sup_{x\in\calI}|\hat{f}_n(x)-f(x)|\toprob 0$, as $n\to\infty$.
\end{supplemma}
\begin{proof}
If $\sup_{x\in\calI}|\hat{f}_n(x)-f(x)|\overset{p}{\nrightarrow}0$, then there exists $\delta>0$, $\eta>0$, $\{n_k\}_{k=1}^\infty$ and $\{x_k\}_{k=1}^\infty\subset\calI$, such that $\lim\limits_{k\to\infty} n_k = \infty$ and $\Prob\pth{|\hat{f}_{n_k}(x_k)-f(x_k)|>\delta} \geq 3\eta$ for any $k\geq 1$. Since $\{x_k\}_{k=1}^\infty\subset[c,d]$, it has a limiting point. Without loss of generality, assume $\lim\limits_{k\to\infty}x_k=x_0\in[c,d]$. If $x_0\in(c,d)$, since $f$ is continuous and decreasing, there exists $x^-_0,\,x^+_0$ such that $c\leq x^-_0 < x_0 < x^+_0 \leq d$ and $f(x^-_0)-\frac{\delta}{4} < f(x_0) < f(x^+_0) + \frac{\delta}{4}$. If $x_0=c$, let $x^-_0=c$ and $x^+_0$ is chosen as above. If $x_0=d$, let $x^+_0=d$ and $x^-_0$ is chosen as above. By noting that $\lim\limits_{k\to\infty}x_k=x_0$, $\hat{f}_{n_k}(x^-_0)\toprob f(x^-_0)$ and $\hat{f}_{n_k}(x^+_0)\toprob f(x^+_0)$, as $k\to\infty$, there exists an enough large $k$ such that $x^-_0\leq x_k \leq x^+_0$, $\Prob\pth{|\hat{f}_{n_k}(x^-_0)-f(x^-_0)|\leq\frac{\delta}{2}} \geq 1-\eta$ and $\Prob\pth{|\hat{f}_{n_k}(x^+_0)-f(x^+_0)|\leq\frac{\delta}{2}} \geq 1-\eta$. Consider the events $E_1^k=\{|\hat{f}_{n_k}(x^-_0)-f(x^-_0)|\leq\frac{\delta}{2}\}$, $E_2^k=\{|\hat{f}_{n_k}(x^+_0)-f(x^+_0)|\leq\frac{\delta}{2}\}$ and $E_3^k=\{|\hat{f}_{n_k}(x_k)-f(x_k)|>\delta\}$. Then we have $\Prob\pth{E_1^k\cap E_2^k} \geq \Prob\pth{E_1^k} + \Prob\pth{E_1^k} - 1 \geq 1-2\eta$ and $\Prob\pth{E_1^k\cap E_2^k\cap E_3^k}\geq \Prob\pth{E_1^k\cap E_2^k} + \Prob\pth{E_3^k} - 1 \geq \eta > 0$. However, on the event $E_1^k\cap E_2^k$, we have 
\begin{align*}
    &\hat{f}_{n_k}(x_k)-f(x_k) \leq \hat{f}_{n_k}(x^-_0)-f(x^+_0) \leq |\hat{f}_{n_k}(x^-_0)-f(x^-_0)| + f(x^-_0)-f(x^+_0)<\frac{\delta}{2} + \frac{\delta}{2}=\delta,\\
    &\hat{f}_{n_k}(x_k)-f(x_k) \geq \hat{f}_{n_k}(x^+_0)-f(x^-_0) \geq -|\hat{f}_{n_k}(x^+_0)-f(x^+_0)| + f(x^+_0)-f(x^-_0)>-\frac{\delta}{2} - \frac{\delta}{2}=-\delta
\end{align*} 
by noting that both $\hat{f}_{n_k}$ and $f$ are decreasing. Hence, $|\hat{f}_{n_k}(x_k)-f(x_k)|<\delta$ always holds on the event $E_1^k\cap E_2^k$, which contradicts that $\Prob\pth{E_1^k\cap E_2^k\cap E^k_3}>0$. Therefore, $\sup_{x\in\calI}|\hat{f}_n(x)-f(x)|\toprob 0$, as $n\to\infty$.
\end{proof}

Now we give the proof of \prettyref{prop:uniform_consistency} in the following steps.
\begin{enumerate}
    \item Show $\omega_n\triangleq \max_{1\leq i\leq n}|\widehat{W}_{(i)}-W_{(i)}|\toprob 0$, as $n\to\infty$: 
    
    $g$ is positive on $(0,1)$ shows that $\lim\limits_{x\to 0^+}f(x)=+\infty$ and $\lim\limits_{x\to 1^-}f(x)=\pi_0$. Hence, for any $\delta\in(0,1)$, there exists $0<c_1<c_2<1$ such that $\frac{\pi_0}{f(c_1)}<\frac{\delta}{16}$ and $\frac{\pi_0}{f(c_2)}>1-\frac{\delta}{16}$. Then
    \begin{align}
        &\quad\,\max_{1\leq i\leq n}|\widehat{W}_i-W_i| \leq \sup_{x\in(0,1)} \bigg|\frac{\pi_0}{f(x)}-\frac{\pi_0}{\hat{f}(x)}\wedge 1\bigg|\notag\\
        &\leq \sup_{x\in(0,c_1)} \bigg|\frac{\pi_0}{f(x)}-\frac{\pi_0}{\hat{f}(x)}\wedge 1\bigg| + \sup_{x\in[c_1,c_2]} \bigg|\frac{\pi_0}{f(x)}-\frac{\pi_0}{\hat{f}(x)}\wedge 1\bigg| + \sup_{x\in(c_2,1)} \bigg|\frac{\pi_0}{f(x)}-\frac{\pi_0}{\hat{f}(x)}\wedge 1\bigg|\notag\\
        &\leq \frac{\pi_0}{f(c_1)} + \frac{\pi_0}{\hat{f}(c_1)} + \sup_{x\in[c_1,c_2]} \bigg|\frac{\pi_0}{f(x)}-\frac{\pi_0}{\hat{f}(x)}\wedge 1\bigg|\sup_{x\in(c_2,1)} \bigg|\frac{\pi_0}{f(x)}-\frac{\pi_0}{\hat{f}(x)}\wedge 1\bigg|\label{ineq:bd}
    \end{align}
    by noting that both $f$ and $\hat{f}$ are decreasing. Since $\frac{\pi_0}{f(c_1)}<\frac{\delta}{16}$, we have
    \begin{align}
        \frac{\pi_0}{f(c_1)} + \frac{\pi_0}{\hat{f}(c_1)}< \frac{\pi_0}{\hat{f}(c_1)}- \frac{\pi_0}{f(c_1)}+\frac{\delta}{8}.\label{ineq:bd1}
    \end{align}
    For any $x\in[c_1,c_2]$, if $\hat{f}(x)\geq \pi_0$, then $\bigg|\frac{\pi_0}{f(x)}-\frac{\pi_0}{\hat{f}(x)}\wedge 1\bigg| = \bigg|\frac{\pi_0}{f(x)}-\frac{\pi_0}{\hat{f}(x)}\bigg|=\frac{\pi_0 |f(x)-\hat{f}(x)|}{f(x)\hat{f}(x)}\leq\frac{|f(x)-\hat{f}(x)|}{\pi_0}$. If $\hat{f}(x)< \pi_0$, then $\bigg|\frac{\pi_0}{f(x)}-\frac{\pi_0}{\hat{f}(x)}\wedge 1\bigg| = 1-\frac{\pi_0}{f(x)} =\frac{f(x)-\pi_0}{f(x)}\leq\frac{f(x)-\hat{f}(x)}{\pi_0}$. Hence, 
    \begin{align*}
        \sup_{x\in[c_1,c_2]} \bigg|\frac{\pi_0}{f(x)}-\frac{\pi_0}{\hat{f}(x)}\wedge 1\bigg|\leq \frac{1}{\pi_0}\sup_{x\in[c_1,c_2]}|f(x)-\hat{f}(x)|.
    \end{align*}
    For any $x\in(c_2,1)$, we have $1\geq \frac{\pi_0}{f(x)}\geq \frac{\pi_0}{f(c_2)}>1-\frac{\delta}{16}$. If $\frac{\pi_0}{\hat{f}(x)} > 1-\frac{\delta}{16}$, then $\bigg|\frac{\pi_0}{f(x)}-\frac{\pi_0}{\hat{f}(x)}\wedge 1\bigg|\leq \frac{\delta}{16}$. If $\frac{\pi_0}{\hat{f}(x)} \leq 1-\frac{\delta}{16}$, then
    \begin{align*}
        &\quad\,\bigg|\frac{\pi_0}{f(x)}-\frac{\pi_0}{\hat{f}(x)}\wedge 1\bigg| = \frac{\pi_0}{f(x)}-\frac{\pi_0}{\hat{f}(x)} = \frac{\pi_0 (\hat{f}(x)-f(x))}{f(x)\hat{f}(x)}\leq \frac{\hat{f}(x)-f(x)}{\pi_0}\\
        &\leq \frac{\hat{f}(c_2)-f(x)}{\pi_0} = \frac{\hat{f}(c_2)-f(c_2)}{\pi_0} + \frac{f(c_2)-f(x)}{\pi_0} \leq \frac{|\hat{f}(c_2)-f(c_2)|}{\pi_0} + \frac{\delta}{8}
    \end{align*}
    where the second inequality is by noting that $\hat{f}$ is decreasing, and the last inequality is because $\frac{f(c_2)-f(x)}{\pi_0} \leq 1/(1-\frac{\delta}{16})-1 = \frac{\delta}{16-\delta}<\frac{\delta}{8}$. Hence, we have 
    \begin{align}
        \sup\limits_{x\in(c_2,1)} \bigg|\frac{\pi_0}{f(x)}-\frac{\pi_0}{\hat{f}(x)}\wedge 1\bigg|\leq \frac{|\hat{f}(c_2)-f(c_2)|}{\pi_0} + \frac{\delta}{8}.\label{ineq:bd3}
    \end{align}
    Combining $\eqref{ineq:bd}-\eqref{ineq:bd3}$, we obtain
    \begin{align*}
        \max_{1\leq i\leq n}|\widehat{W}_i-W_i| \leq \frac{\pi_0}{\hat{f}(c_1)}  - \frac{\pi_0}{f(c_1)} + \frac{1}{\pi_0}\sup\limits_{x\in[c_1,c_2]}|f(x)-\hat{f}(x)| + \frac{|\hat{f}(c_2)-f(c_2)|}{\pi_0} + \frac{\delta}{4}.
    \end{align*}
    It follows that $\Prob\pth{\omega_n > \delta} \leq \Prob\pth{\sup_{x\in[c_1,c_2]}|f(x)-\hat{f}(x)|>\frac{\pi_0\delta}{4}}
        + \Prob\pth{\frac{\pi_0}{\hat{f}(c_1)}  - \frac{\pi_0}{f(c_1)} > \frac{\delta}{4}} + \Prob\pth{|\hat{f}(c_2)-f(c_2)|>\frac{\pi_0\delta}{4}}$. By Corollary 3.1 of \cite{groeneboom2014nonparametric}, we have $\hat{f}(x)\toprob f(x)$ for any $x\in(0,1)$, so $\frac{\pi_0}{\hat{f}(c_1)}\toprob \frac{\pi_0}{f(c_1)}$. Since $\hat{f}$ is decreasing and $f$ is decreasing and continuous on $[c_1,c_2]$, we have $\sup_{x\in[c_1,c_2]}|f(x)-\hat{f}(x)|\toprob 0$ by \prettyref{supplmm:uniform_convergence}. Thus, $\Prob\pth{\omega_n > \delta}\to 0$, as $n\to\infty$. We complete the proof.

    \item Show $\frac{1}{n}\sum_{i=1}^n \widehat{W}_i\indc{\widehat{W}_i\leq y} \toprob  \Expect[W\indc{W\leq y}]$ for any $y\in[0,1]$:
    
    Fix $y\in[0,1]$. Let $\nu_n(t)=\bigg|\frac{1}{n}\sum_{i=1}^n W_i \indc{W_i\leq t}-\Expect[W\indc{W\leq t}]\bigg|$. By the Weak Law of Large Numbers, $\nu_n(t)\toprob 0$, as $n\to\infty$, for any $t\in\reals$. Since $G$ is continuous at $y$, for any $\delta>0$, there exists $\epsilon\in(0,\frac{\delta}{3})$, such that $|G(\tilde{y})-G(y)|\leq \frac{\delta}{3}$ for any $\tilde{y}\in[y-\epsilon,y+\epsilon]$. Then
    \begin{align}
        &\quad\,\Prob\pth{\bigg|\frac{1}{n}\sum_{i=1}^n \widehat{W}_i\indc{\widehat{W}_i\leq y} - \Expect[W\indc{W\leq y}]\bigg|>\delta}\notag\\
        &\leq \Prob\pth{\bigg|\frac{1}{n}\sum_{i=1}^n \widehat{W}_i\indc{\widehat{W}_i\leq y} - \Expect[W\indc{W\leq y}]\bigg|>\delta,\,\omega_n\leq \epsilon} + \Prob\pth{\omega_n > \epsilon}.\label{ineq:bdd_part2}
    \end{align}
    If $\omega_n\leq\epsilon$, we have 
    \begin{align*}
        \frac{1}{n}\sum_{i=1}^n \widehat{W}_i\indc{\widehat{W}_i\leq y} &\leq \frac{1}{n}\sum_{i=1}^n (W_i+\epsilon)\indc{W_i\leq y+\epsilon}\leq \frac{1}{n}\sum_{i=1}^n W_i\indc{W_i\leq y+\epsilon} + \epsilon\\
        &\leq \Expect[W\indc{W\leq y+\epsilon}] + \nu_n(y+\epsilon) + \frac{\delta}{3}\\
        &= \Expect[W\indc{W\leq  y}] + \Expect[W\indc{y< W \leq y+\epsilon}] + \nu_n(y+\epsilon) + \frac{\delta}{3}\\
        &\leq \Expect[W\indc{W\leq y}] + G(y+\epsilon) - G(y) + \nu_n(y+\epsilon) + \frac{\delta}{3}\\
        &\leq \Expect[W\indc{W\leq y}]  + \nu_n(y+\epsilon) + \frac{2}{3}\delta.
    \end{align*}
    Similarly, we have $\frac{1}{n}\sum_{i=1}^n \widehat{W}_i\indc{\widehat{W}_i\leq y}\geq \Expect[W\indc{W\leq y}]  - \nu_n(y+\epsilon) - \frac{2}{3}\delta$. Thus, 
    \[
    \bigg|\frac{1}{n}\sum_{i=1}^n \widehat{W}_i\indc{\widehat{W}_i\leq y} - \Expect[W\indc{W\leq y}]\bigg| \leq \nu_n(y+\epsilon) + \frac{2}{3}\delta.
    \]
    Continuing with \eqref{ineq:bdd_part2}, 
    \begin{align*}
        &\quad\,\Prob\pth{\bigg|\frac{1}{n}\sum_{i=1}^n \widehat{W}_i\indc{\widehat{W}_i\leq y} - \Expect[W\indc{W\leq y}]\bigg|>\delta}\\
        &\leq \Prob\pth{\nu_n(y+\epsilon)>\frac{\delta}{3}} + \Prob\pth{\omega_n > \epsilon}\to 0.
    \end{align*} This completes the proof for part 2.
    
    \item Similarly, $\frac{1}{n}\sum_{i=1}^n \indc{\widehat{W}_i\leq y} \toprob  \Expect[\indc{W\leq y}]=G(y)$, as $n\to\infty$, for any $y\in[0,1]$.
    
    \item Show $\sup\limits_{y\in[0,1]}\big|\widehat{A}(y)- A(y)\big|\toprob  0$, as $n\to\infty$: 
    
    Since $g$ is positive on $(0,1)$, $G(y)>0$ for any $y>0$. By part 2, 3 and the Slutsky's theorem,
    \[
    \widehat{A}(y) = \frac{\frac{1}{n}\sum_{i=1}^n \widehat{W}_i\indc{\widehat{W}_i\leq  y}}{\frac{1}{n}\vee\frac{1}{n}\sum_{i=1}^n \indc{\widehat{W}_i \leq y}} \toprob  \frac{\Expect[W\indc{W\leq y}]}{G(y)} = A(y)\] for any $y\in(0,1]$. Note that $\widehat{A}(0)=A(0)=0$, so $\widehat{A}(y)\toprob A(y)$ for any $y\in[0,1]$. Since $A(y)=\frac{\int_0^y sg(s)\,ds}{G(y)}$ is increasing and continuous on $(0,1]$, and $\lim_{y\to 0^+}A(y)=\lim_{y\to 0^+}\frac{\int_0^y sg(s)\,ds}{G(y)} = \lim_{y\to 0^+}\frac{yg(y)}{g(y)}=0=A(0)$, we have $A$ is increasing and continuous on $[0,1]$. Note that $\widehat{A}$ is increasing on $[0,1]$, so by \prettyref{supplmm:uniform_convergence} we have $\sup_{y\in[0,1]}\big|\widehat{A}(y)- A(y)\big|\toprob  0$, as $n\to\infty$.
    
    \item Similarly, $\sup\limits_{y\in[0,1]}\big|\widehat{B}(y)- B(y)\big|\toprob  0$, as $n\to\infty$.
    
    \item Show that there exists one choice of $\widehat{y^*}$ such that $\sup\limits_{\alpha\in[0,1]}\big|\widehat{y^*}(\alpha) - y^*(\alpha)|\toprob 0$, as $n\to\infty$, where $y^*(\alpha)$ is given in \eqref{eq:y*}:
    
    The proof is based on \cite[Lemma A.5]{sun2007oracle}. Define two continuous functions $\widehat{A}^-(y)$ and $\widehat{A}^+(y)$ on $[0,1]$ such that for $y\in[\widehat{W}_{(k)},\widehat{W}_{(k+1)}]$, $k=0,1,\cdots,n$,
    \[\widehat{A}^-(y) = \frac{\widehat{W}_{(k+1)}-y}{\widehat{W}_{(k+1)}-\widehat{W}_{(k)}}\,\widehat{A}(\widehat{W}_{(k-1)}) + \frac{y-\widehat{W}_{(k)}}{\widehat{W}_{(k+1)}-\widehat{W}_{(k)}}\,\widehat{A}(\widehat{W}_{(k)})\] and \[\widehat{A}^+(y) = \frac{\widehat{W}_{(k+1)}-y}{\widehat{W}_{(k+1)}-\widehat{W}_{(k)}}\,\widehat{A}(\widehat{W}_{(k)}) + \frac{y-\widehat{W}{(k)}}{\widehat{W}_{(k+1)}-\widehat{W}_{(k)}}\,\widehat{A}(\widehat{W}_{(k+1)}),\] where we assume $\widehat{W}_{(-1)}=\widehat{W}_{(0)}=0$ and $\widehat{W}_{(n+1)}=1$. Then we have $\widehat{A}^-(y)\leq\widehat{A}(y)\leq\widehat{A}^+(y)$, and both $\widehat{A}^-(y)$ and $\widehat{A}^-(y)$ are continuous and increasing. Let $\widehat{y^*}^-(\alpha) = \sup\{y\in[0,1]:\widehat{A}^-(y)\leq\alpha\}$ and $\widehat{y^*}^+(\alpha) = \sup\{y\in[0,1]:\widehat{A}^+(y)\leq\alpha\}$. Then there exists a feasible choice of $\widehat{y^*}(\alpha)$ such that $\widehat{y^*}^+(\alpha)\leq \widehat{y^*}(\alpha)\leq \widehat{y^*}^-(\alpha)$, so it suffices to prove $\sup_{\alpha\in[0,1]}\big|\widehat{y^*}^+(\alpha) - y^*(\alpha)\big|\toprob 0$ and $\sup_{\alpha\in[0,1]}\big|\widehat{y^*}^-(\alpha) - y^*(\alpha)\big|\toprob 0$. First, we claim $\sup_{y\in[0,1]}\big|\widehat{A}^-(y) - A(y)\big|\toprob 0$ and $\sup_{y\in[0,1]}\big|\widehat{A}^+(y) - A(y)\big|\toprob 0$. In fact, since $A$ is uniformly continuous in $[0,1]$, for any $\delta>0$, there exists $\epsilon>0$ such that $|A(y_1)-A(y_2)|\leq \frac{\delta}{2}$ as long as $|y_1-y_2|\leq \epsilon$. Then
    \begin{align*}
    0&\leq\sup_{y\in[0,1]}\pth{\widehat{A}^+(y) - \widehat{A}^-(y)}=\sup_{y\in[0,1)}\pth{\widehat{A}(y+) - \widehat{A}(y)}\leq \sup_{y\in[0,1)}\pth{\widehat{A}(y+\epsilon) - \widehat{A}(y)}\\
    &\leq 2\sup_{y\in[0,1]}\big|\widehat{A}(y) - A(y)\big| + \sup_{y\in[0,1)}|A(y+\epsilon)-A(y)| \leq 2\sup_{y\in[0,1]}\big|\widehat{A}(y) - A(y)\big| + \frac{\delta}{2}
    \end{align*}
    by noting that $\widehat{A}$ is left-continuous and piecewise constant. It follows that 
    \[
    \Prob\pth{\sup_{y\in[0,1]}\big|\widehat{A}^+(y) - \widehat{A}^-(y)\big|>\delta} \leq \Prob\pth{\sup_{y\in[0,1]}\big|\widehat{A}(y) - A(y)\big| > \frac{\delta}{4}} \to 0
    \]
    by part 4, so $\sup_{y\in[0,1]}\big|\widehat{A}^+(y) - \widehat{A}^-(y)\big|\toprob 0$. Note that $\widehat{A}^-(y)\leq\widehat{A}(y)\leq\widehat{A}^+(y)$, we obtain the claim. 
    
    Next, we show that $\sup_{\alpha\in[0,1]}\big|\widehat{y^*}^-(\alpha) - y^*(\alpha)\big|\toprob 0$. If not, there exists $\epsilon_0>0$, $\eta_0>0$ and $N>0$ such that $\Prob\pth{\sup_{\alpha\in[0,1]}\big|\widehat{y^*}^-(\alpha) - y^*(\alpha)\big| > \epsilon_0} > 2\eta_0$ for any $n>N$. Let $2\delta_0=\inf\limits_{y\in[0,1-\epsilon_0]}\pth{A(y+\epsilon_0)-A(y)}$. Since $A$ is continuous and strictly increasing on $[0,1]$, we have $\delta_0>0$. Then there exists $N'>0$ such that $\Prob\pth{\sup_{y\in[0,1]}\big|\widehat{A}^-(y) - A(y)\big| < \delta_0} > 1-\eta_0$ for any $n>N'$. Consider the events $E_1^n=\left\{\sup_{\alpha\in[0,1]}\big|\widehat{y^*}^-(\alpha) - y^*(\alpha)\big| > \epsilon_0\right\}$ and $E^n_2=\left\{\sup_{y\in[0,1]}\big|\widehat{A}^-(y) - A(y)\big| < \delta_0\right\}$. When $n>\max(N,N')$, we have $\Prob\pth{E^n_1 \cap E_2^n} \geq \Prob\pth{E^n_1} + \Prob\pth{E^n_2} - 1 \geq 2\eta_0 + 1-\eta_0 - 1 = \eta_0 > 0$. On the event $E^n_1 \cap E_2^n$, if $\widehat{y^*}^-(\alpha) - y^*(\alpha)> \epsilon_0$ for some $\alpha\in[0,1]$, we have $y^*(\alpha) < 1-\epsilon_0$ and thus
    \begin{align*}
        \alpha \geq\widehat{A}^-(\widehat{y^*}^-(\alpha)) \geq \widehat{A}^-(y^*(\alpha)+\epsilon_0) > A(y^*(\alpha)+\epsilon_0) - \delta_0 \geq A(y^*(\alpha)) +2\delta_0-\delta_0=\alpha + \delta_0,
    \end{align*}
    which leads to contradiction. If $y^*(\alpha) - \widehat{y^*}^-(\alpha)> \epsilon_0$ for some $\alpha\in[0,1]$, we have $\widehat{y^*}^-(\alpha) < 1-\epsilon_0$ and thus
    \begin{align*}
        \alpha \geq A(y^*(\alpha)) \geq A(\widehat{y^*}^-(\alpha)+\epsilon_0) > A(\widehat{y^*}^-(\alpha)) + 2\delta_0 \geq \widehat{A}^-(\widehat{y^*}^-(\alpha)) +\delta_0=\alpha + \delta_0,
    \end{align*}
    which also leads to contradiction. Therefore, we have $\sup_{\alpha\in[0,1]}\big|\widehat{y^*}^-(\alpha) - y^*(\alpha)\big|\toprob 0$. Similarly, $\sup_{\alpha\in[0,1]}\big|\widehat{y^*}^+(\alpha) - y^*(\alpha)\big|\toprob 0$. Finally, we conclude
    \[\sup_{\alpha\in[0,1]}\big|\widehat{y^*}(\alpha) - y^*(\alpha)\big|\leq \sup_{\alpha\in[0,1]}\big|\widehat{y^*}^-(\alpha) - y^*(\alpha)\big| + \sup_{\alpha\in[0,1]}\big|\widehat{y^*}^+(\alpha) - y^*(\alpha)\big| \toprob 0.\]
    
    \item Show $\sup\limits_{\alpha\in[0,1]}\big|\widehat{b^*}(\alpha)- b^*(\alpha)\big|\toprob 0$, as $n\to\infty$:
    
    Since $B$ is uniformly continuous on $[0,1]$, for any $\delta>0$, there exists $\epsilon>0$ such that $|B(y_1)-B(y_2)|\leq \frac{\delta}{2}$ as long as $|y_1-y_2|\leq\epsilon$. If $\sup_{\alpha\in[0,1]}|\widehat{y^*}(\alpha)-y^*(\alpha)|\leq\epsilon$, we have
    \begin{align*}
        \sup_{\alpha\in[0,1]}\big|\widehat{b^*}(\alpha) -  b^*(\alpha)\big| = \sup_{\alpha\in[0,1]}\big|\widehat{B}(\widehat{y^*}(\alpha))-B(y^*(\alpha))\big| \leq \|\widehat{B}-B\|_\infty + \frac{\delta}{2}.
    \end{align*}
    Hence, 
    \begin{align*}
        \Prob\pth{ \big\|\widehat{b^*} -  b^*\big\|_\infty > \delta}
        &\leq \Prob\pth{\big\|\widehat{b^*} -  b^*\big\|_\infty > \delta,\,\big\|\widehat{y^*}-y^*\big\|_\infty\leq\epsilon}+\Prob\pth{\big\|\widehat{y^*}-y^*\big\|_\infty>\epsilon}\\
        &\leq \Prob\pth{ \big\|\widehat{B}-B\big\|_\infty > \frac{\delta}{2}} + \Prob\pth{\big\|\widehat{y^*}-y^*\big\|_\infty>\epsilon} \to 0
    \end{align*}
    by part 5 and 6. Therefore, $\sup_{\alpha\in[0,1]}\big|\widehat{b^*}(\alpha)- b^*(\alpha)\big|\toprob 0$.
    
    \item Show $\sup\limits_{\alpha\in[0,1]}\big|\widehat{\optFNR}(\alpha)-\optFNR(\alpha)\big|\toprob 0$, as $n\to\infty$:
    
    Let $\Delta=\|\widehat{b^*}- b^*\|_\infty$. For any $\alpha\in[0,1]$, we have $\widehat{\optFNR}(\alpha)\leq \widehat{b^*}(\alpha) \leq  b^*(\alpha) + \Delta$. Since $\widehat{\optFNR}(\cdot)-\Delta$ is convex and $\optFNR$ is the GCM of $b^*$, we have $\widehat{\optFNR}(\alpha)-\Delta\leq \optFNR(\alpha)$ for all $\alpha\in[0,1]$. Similarly, $\optFNR(\alpha)-\Delta\leq \widehat{\optFNR}(\alpha)$ for all $\alpha\in[0,1]$. Therefore, $\|\widehat{\optFNR}-\optFNR\|_\infty \leq \Delta = \|\widehat{b^*}- b^*\|_\infty\toprob 0$.
\end{enumerate}
We complete the proof of \prettyref{prop:uniform_consistency}.

\subsection{Proof of \prettyref{prop:consistency}}\label{app:ProofConsistency2}
For any fixed $\alpha\in[0,1]$, by the choices of $z_{k_0}$ and $p$, we have 
\begin{align}
    &p\,z_{k_0} + (1-p)\,z_{k_0+1} = \alpha,\label{eq:FDR_alpha}\\
    &p\,\widehat{b^*}(z_{k_0}) + (1-p)\,\widehat{b^*}(z_{k_0+1}) = \widehat{\optFNR}(\alpha).\label{eq:FNR_C*}
\end{align}
By the definitions of $\widehat{y^*}$ in \eqref{eq:y*_hat} and $\widehat{b^*}$ in \eqref{eq:OPT_3_hat}, we have
\begin{align}
    &\widehat{A}\pth{\widehat{y^*}(z_{k_0})} \leq z_{k_0},\quad\widehat{A}\pth{\widehat{y^*}(z_{k_0+1})} \leq z_{k_0+1},\label{eq:constr}\\
    &\widehat{b^*}(z_{k_0}) = \widehat{B}\pth{\widehat{y^*}(z_{k_0})},\quad\widehat{b^*}(z_{k_0+1}) = \widehat{B}\pth{\widehat{y^*}(z_{k_0+1})}.\label{eq:objective} 
\end{align}
Since $\omega_n\toprob 0$ and noting that $\omega_n\in[0,1]$, we have $\Expect[\omega_n]\to 0$, as $n\to\infty$. By the definition of $\delta^n$ in \eqref{eq:globally-randomized}, we have 
\begin{align*}
    \FDR(\delta^n) &= \Expect\left[\frac{\sum_{i=1}^n \delta_i(1-\theta_i)}{1\vee\sum_{i=1}^n \delta_i}\right] =  \Expect\left[\Expect\left[\frac{\sum_{i=1}^n \delta_i(1-\theta_i)}{1\vee\sum_{i=1}^n \delta_i}\bigg|X^n\right]\right] = \Expect\left[\Expect\left[\frac{\sum_{i=1}^n \delta_i W_i}{1\vee\sum_{i=1}^n \delta_i}\bigg|X^n\right]\right]\\
    &= \Expect\left[p\,\frac{\sum_{i=1}^n W_i\indc{\widehat{W}_i\leq \widehat{y^*}(z_{k_0})}}{1\vee\sum_{i=1}^n \indc{\widehat{W}_i\leq \widehat{y^*}(z_{k_0})}} + (1-p)\,\frac{\sum_{i=1}^n W_i\indc{\widehat{W}_i\leq \widehat{y^*}(z_{k_0+1})}}{1\vee\sum_{i=1}^n \indc{\widehat{W}_i\leq \widehat{y^*}(z_{k_0+1})}}\right]\\
    &\leq \Expect\left[p\,\widehat{A}\pth{\widehat{y^*}(z_{k_0})} + (1-p)\,\widehat{A}\pth{\widehat{y^*}(z_{k_0+1})} + \omega_n\right]\\
    &\overset{\eqref{eq:constr}}{\leq} \Expect[p\,z_{k_0} + (1-p)\,z_{k_0+1} + \omega_n] \overset{\eqref{eq:FDR_alpha}}{=} \alpha + \Expect[\omega_n].
\end{align*}
Hence, $\limsup\limits_{n\to\infty}\sup\limits_{\alpha\in[0,1]}\pth{\FDR(\delta^n)-\alpha}\leq\lim\limits_{n\to\infty}\Expect[\omega_n]= 0$. Similarly,
\begin{align*}
    \FNR(\delta^n) &= \Expect\left[p\,\frac{\sum_{i=1}^n (1-W_i)\indc{\widehat{W}_i> \widehat{y^*}(z_{k_0})}}{1\vee\sum_{i=1}^n \indc{\widehat{W}_i> \widehat{y^*}(z_{k_0})}} + (1-p)\,\frac{\sum_{i=1}^n (1-W_i)\indc{\widehat{W}_i> \widehat{y^*}(z_{k_0+1})}}{1\vee\sum_{i=1}^n \indc{\widehat{W}_i> \widehat{y^*}(z_{k_0+1})}}\right]\\
    &\leq \Expect\left[p\,\widehat{B}\pth{\widehat{y^*}(z_{k_0})} + (1-p)\,\widehat{B}\pth{\widehat{y^*}(z_{k_0+1})} + \omega_n\right]\\
    &\overset{\eqref{eq:objective}}{=} \Expect[p\,\widehat{b^*}(z_{k_0}) + (1-p)\,\widehat{b^*}(z_{k_0+1}) + \omega_n] \overset{\eqref{eq:FNR_C*}}{=} \Expect[\widehat{\optFNR}(\alpha)+\omega_n],
\end{align*}
and $\FNR(\delta^n)\geq \Expect[\widehat{\optFNR}(\alpha)-\omega_n]$. Thus, 
\[
\sup_{\alpha\in[0,1]}|\FNR(\delta^n)- \optFNR(\alpha)| \leq \Expect\left[\big\|\widehat{\optFNR} - \optFNR\big\|_\infty+\omega_n\right]\to 0
\]
by \prettyref{prop:uniform_consistency}. We complete the proof of \prettyref{prop:consistency}.

\section{Comparisons for a fixed FDR level}\label{app:ComparisonsFixedLevel}
We provide the following \prettyref{tab:comparison} to compare for a fixed FDR level $\alpha=0.2$, the estimated EFN, FNR, FDR, as well as the variance of FDP, for oracle BH, Sun\&Cai and \prettyref{algo:globally-randomized}. (When aiming to approximate the optimal FDR-EFN tradeoff, we replace the definition of $\widehat{B}(y)$ in \prettyref{eq:A_hat&B_hat} with $\widehat{B}(y)\triangleq  \frac{1}{n}\sum_{i=1}^n (1-\widehat{W}_i)\indc{\widehat{W}_i> y}$ in \prettyref{algo:globally-randomized}.)

\begin{table}[htbp]
\centering
\begin{tabular}{
  S[table-format=1.2]  
  S[table-format=1] 
  l                    
  S[table-format=1.2]  
  S[table-format=1.3]  
  S[table-format=1.3]  
  S[table-format=1.2]  
}
\toprule
{$\pi_0$} & {$\mu$} & {Procedure} & {FDR} & {FNR} & {EFN} & {$\var(\text{FDP})$} \\
\midrule
0.75 & 1 & oracle BH   & 0.20 & 0.238 & 0.233  & 0.02\\
     &   & Sun\&Cai    & 0.18 & 0.240 & 0.236 & 0.02\\
     &   & Algorithm 1 & 0.20 & 0.190 & 0.181 & 0.11\\
\midrule
0.75 & 2 & oracle BH   & 0.20 & 0.108 & 0.086  & 0.001\\
     &   & Sun\&Cai    & 0.18 & 0.116 & 0.094 & 0.001\\
     &   & Algorithm 1 & 0.21 & 0.106 & 0.084 & 0.007\\
\midrule
0.99 & 2 & oracle BH   & 0.20 & 0.009 & 0.009  & 0.09\\
     &   & Sun\&Cai    & 0.11 & 0.009 & 0.009 & 0.06\\
     &   & Algorithm 1 & 0.24 & 0.008 & 0.008 & 0.16\\
\midrule
0.99 & 3 & oracle BH   & 0.20 & 0.005 & 0.005  & 0.03\\
     &   & Sun\&Cai    & 0.13 & 0.006 & 0.006 & 0.03\\
     &   & Algorithm 1 & 0.25 & 0.005 & 0.005 & 0.12\\
\bottomrule
\end{tabular}
\caption{Comparisons among oracle BH, Sun\&Cai and \prettyref{algo:globally-randomized} for $\alpha=0.2$.}
\label{tab:comparison}
\end{table}

For $\pi_0=0.75$, all three procedures successfully control the FDR at level $\alpha=0.2$. \prettyref{algo:globally-randomized} has smaller FNR and EFN but higher variability of FDP than the others for $\mu=1$. Sun\&Cai has larger FNR and EFN than the others for $\mu=2$, and they all have very small variability of FDP. When the signal is very sparse, \ie $\pi_0=0.99$, \prettyref{algo:globally-randomized} has larger variance of FDP than the others and its FDR is higher than the desired level due to the density estimation errors for the given sample size.

\section{Optimal FDR-EFN tradeoff}\label{app:PoofsFDR-EFN}
\subsection{Proof of \prettyref{thm:FDR-ETD}} 

It suffices to show $\lim\limits_{n\to\infty}\optEFNnorm_n(\alpha)=\tilde{b}^{**}(\alpha)$. We first show $\optEFNnorm_n(\alpha) \leq  \tilde{b}^{**}(\alpha)$ for any $n$ and $\alpha$. Suppose $U$ is a feasible solution of \eqref{eq:opt_prob_a&b'_random} and consider a randomized rule $\delta^n$ of the form \prettyref{eq:construction_of_delta}. For a fixed $u\in[0,1]$, let $\delta^{n,u}=(\delta^u_1,\cdots,\delta^u_n)$ denote the rule $\delta^n$ conditioned on $U=u$. It's shown in the proof of \prettyref{prop:asymp_opt} that $\FDR(\delta^{n,u})\leq\Prob\pth{\theta_1 = 0\,|\,\delta^u_1=1}= a(u)$ and $\FNR(\delta^{n,u}) \leq\Prob\pth{\theta_1 = 1\,|\,\delta^u_1=0}= b(u)$. Hence, $\EFN(\delta^{n,u}) = \Prob\pth{\theta_1 = 1,\delta^u_1=0}= \Prob\pth{\delta^u_1=0}\Prob\pth{\theta_1 = 1\,|\,\delta^u_1=0}= (1-u)b(u)=\tilde{b}(u)$.
Then,     
$\FDR(\delta^n) = \Expect[\FDR(\delta^{n,U})] \leq \Expect[a(U)] \leq \alpha\text{ and }
\EFN(\delta^n)\leq \Expect[\tilde{b}(U)]$. Consequently, $\optEFNnorm_n(\alpha) \leq \EFN(\delta^n) \leq \Expect[\tilde{b}(U)]$. Optimizing over all feasible $U$, we have $\optEFNnorm_n(\alpha) \leq  \tilde{b}^{**}(\alpha)$. 

Next, we prove $\liminf\limits_{n\to\infty} \optEFNnorm_n(\alpha) \geq  \tilde{b}^{**}(\alpha)$ for any $\alpha$. Following \prettyref{eq:concentrate_b_tilde}, we have $$\Expect\left[\pth{\frac{1}{n}\sum_{i=K_n+1}^n (1-W_{(i)}) - \tilde{b}(K_n/n)}\cdot\indc{K_n\leq (1-\tau)n}\right] \to 0.$$ Then, by replacing $\frac{1}{n-K_n}\sum_{i=K_n+1}^n (1-W_{(i)})$ with $\frac{1}{n}\sum_{i=K_n+1}^n (1-W_{(i)})$, $b(\cdot)$ with $\tilde{b}(\cdot)$, and $\optFNR_n$ with $\optEFNnorm_n$ in the proof of \prettyref{prop:asymp_opt}, we can show that $ \tilde{b}^{**}(\alpha+2\epsilon)\leq \optEFNnorm_n(\alpha) + 3\epsilon$ for sufficiently large $n$. Letting $n\to\infty$, we get $\liminf\limits_{n\to\infty}\optEFNnorm_n(\alpha) \geq  \tilde{b}^{**}(\alpha+2\epsilon) - 3\epsilon$. Similar to the properties of $b^{**}$ stated in \prettyref{lmm:property_of_b_doule_star}, $\tilde{b}^{**}$ is the GCM of $\tilde{b}^*$ defined in \prettyref{eq:opt_prob_a&b'}. Consequently, $\tilde{b}^{**}$ is convex on $[0,1]$ and thus continuous on $(0,1)$. Letting $\epsilon\downarrow 0$, we have $\liminf\limits_{n\to\infty}\optEFNnorm_n(\alpha) \geq  \tilde{b}^{**}(\alpha)$ for any $\alpha\in(0,1)$. The corner cases of $\alpha=0$ and $\alpha=1$ can be verified easily.

To conclude, $\lim\limits_{n\to\infty}\optEFNnorm_n(\alpha)=\tilde{b}^{**}(\alpha)$, which is the GCM of the function $\tilde{b}^{*}=\optmEFNnorm$. Furthermore, procedure \prettyref{eq:construction_of_delta} asymptotically achieves $\tilde{b}^{**}(\alpha)$, by substituting the optimal solution of \prettyref{eq:opt_prob_a&b'_random} for $U$.

\subsection{Proof of \prettyref{thm:high_prob_formulation_1}}

We first prove the positive part. Let $\maroracle(\alpha)=(\delta_1,\ldots,\delta_n)$ be the optimal rule for \prettyref{eq:opt_prob_mFDR&EFN}, which has the form \prettyref{eq:optimal_sol_of_mFDR&mFNR}. It's shown in the proof of the positive part of \prettyref{thm:high_prob_formulation} that 
$\text{FDP}(\maroracle(\alpha)) \toprob a(u^*(\alpha)) \leq \alpha$, where $u^*(\alpha)$ is the optimal solution of \prettyref{eq:opt_prob_a&b'}. By the Law of Large Numbers, we have
\begin{align*} 
    \frac{1}{n}\sum_{i=1}^n\theta_i(1-\delta_i) \toprob \pi_1\Expect_1[1-S_{u^*(\alpha)}] = \Expect[(1-W)\indc{S_{u^*(\alpha)}=0}]=\tilde{b}(u^*(\alpha))=\tilde{b}^*(\alpha).
\end{align*}
Hence, $\FN(\maroracle(\alpha))= \frac{1}{n}\sum_{i=1}^n\theta_i(1-\delta_i) \toprob\tilde{b}^*(\alpha)=\optmEFNnorm(\alpha)$.

Next, we prove the negative part of \prettyref{thm:high_prob_formulation_1}. Suppose $\Prob\pth{\FDP(\delta^n) \leq \alpha} \geq 1-\tau_n$ for some $\tau_n=o(1)$ and $\EFN(\delta^n) \leq \beta$. We aim to show $\beta \geq \optmEFNnorm(\alpha)=\tilde{b}^*(\alpha)$. Thanks to \prettyref{eq:concentrate_b_tilde}, by replacing $\frac{1}{n-K_n}\sum_{i=K_n+1}^n (1-W_{(i)})$ with $\frac{1}{n}\sum_{i=K_n+1}^n (1-W_{(i)})$, $\FNR$ with $\EFN$, $b$ with $\tilde{b}$, and $b^*$ with $\tilde{b}^*$ in the proof of the negative part of \prettyref{thm:high_prob_formulation}, we can show that $ \tilde{b}^*\pth{\alpha + \sqrt{\tau_n}+\frac{\epsilon}{2}} \leq \beta + 3\epsilon$. By first letting $n\to\infty$ and then letting $\epsilon\downarrow 0$
and noting that $ \tilde{b}^*$ is right-continuous, we obtain $\beta\geq \tilde{b}^*(\alpha)=\optmEFNnorm(\alpha)$. 

\section*{Acknowledgment}
The authors are grateful to Yury Polyanskiy for helpful discussions. In particular, the idea behind the proof of \prettyref{lmm:W&W_tilde} is due to him.

\bibliographystyle{alpha}
\bibliography{FDR}

\end{document}